\newtheorem{cor}{Corollary}[section]
\newtheorem{lemma}{Lemma}[section]
\newtheorem{rem}{Remark}[section]
\newtheorem{prop}{Proposition}[section]
\newtheorem{defi}{Definition}[section]
\newcommand{\E}{\mathbb{E}}
\newcommand{\R}{\mathbb{R}}
\newcommand{\sca}{SC(\alpha)}
\newcommand{\tn}{\theta_{n}}
\newcommand{\tnp}{\theta_{n+1}}
\newcommand{\tZn}{\widetilde{Z}_n}
\newcommand{\tZnp}{\widetilde{Z}_{n+1}}
\newcommand{\Znp}{Z_{n+1}}
\newcommand{\cZnp}{\check{Z}_{n+1}}
\newcommand{\cZn}{\check{Z}_{n}}
\newcommand{\htnp}{\hat{\theta}_{n+1}}
\newcommand{\htn}{\hat{\theta}_{n}}
\newcommand{\tcr}{\textcolor{black}}
\newcommand{\tcb}{\textcolor{black}}
\newcommand{\ES}{\mathbb{E}}
\newcommand{\ER}{\mathbb{R}}
\newcommand{\teps}{T_{\varepsilon,\gamma,r}}
\newtheorem{thm}{Theorem}
\numberwithin{equation}{section}
\theoremstyle{plain}
\begin{document}

\begin{frontmatter}
\title{Optimal non-asymptotic bound of the Ruppert-Polyak averaging without strong convexity}
\runtitle{Non-asymptotic analysis of the Ruppert-Polyak averaging }

\begin{aug}
\author{\fnms{S\'ebastien} \snm{Gadat}\ead[label=e1]{sebastien.gadat@math.univ-toulouse.fr}} 
\and 
\author{\fnms{Fabien} \snm{Panloup}\ead[label=e4]{fabien.panloup@math.univ-angers.fr}}

\runauthor{S. Gadat,  F. Panloup}

\affiliation{Toulouse School of Economics, University of Toulouse I Capitole}
\address{Toulouse School of Economics\\ 
 Universit\'e Toulouse 1 - Capitole.\\
21 all\'ees de Brienne\\ 31000 Toulouse, France.\\
\printead{e1}\\}

\affiliation{Laboratoire Angevin  de Recherche en Math\'ematiques, Universit\'e d'Angers}
\address{Laboratoire Angevin  de Recherche en Math\'ematiques,\\
 Universit\'e d'Angers\\
2 Boulevard Lavoisier\\
49045 Angers cedex 01, France.\\
\printead{e4}\\}

\end{aug}
\begin{abstract} 


This paper is devoted to the non-asymptotic control of the mean-squared error for the  Ruppert-Polyak  stochastic averaged gradient descent introduced in the seminal contributions of \cite{ruppert} and \cite{polyakjuditsky}. In our main results, we establish non-asymptotic tight bounds (optimal with respect to the Cramer-Rao lower bound) in a very general framework that includes the  uniformly strongly convex case as well as the one where the function $f$ to be minimized satisfies a  weaker  Kurdyka-\L ojiasewicz-type condition \cite{Lojasiewicz,Kurdyka}. In particular, it makes it possible  to recover 
some pathological examples such as  on-line learning for  logistic regression (see  \cite{BachLogit}) and  recursive quantile estimation (an even non-convex situation). 
\tcr{Finally,   our bound is optimal when the decreasing step $(\gamma_n)_{n\ge1}$ satisfies: $\gamma_n=\gamma n^{-\beta}$ with $\beta=3/4$, leading to a second-order term in $O(n^{-5/4})$.}

\end{abstract}

\begin{keyword}[class=AMS]
\kwd[Primary ]{62L20}
\kwd{80M50}
\kwd[; secondary ]{68W25}
\end{keyword}

\begin{keyword}
\kwd{stochastic algorithms, optimization, averaging}
\end{keyword}

\end{frontmatter}

\section{Introduction}

\subsection{Averaging principle for stochastic algorithms}
\paragraph{Initial averaging algorithm}
 Let $f:\R^d\rightarrow\R $ be a function that belongs to ${\cal C}^2(\R^d,\R)$, \textit{i.e.},  the space of twice differentiable functions from $\ER^d$
 to $\ER$ with  continuous second partial derivatives. Let us assume that $\nabla f$ admits the following representation: a measurable function $\Lambda:\R^d\times\R^p\rightarrow \R^d$ and a random variable $Z$ with values in $\R^p$ exist such that $Z$ is distributed according to $\mu$ such that:
$$\forall \theta\in\R^d, \quad \nabla f (\theta)=\E[\Lambda(\theta,Z)].$$
In this case, the  Robbins-Monro procedure introduced in the seminal contribution \cite{RobbinsMonro} is built with an i.i.d. sequence of observations $(Z_i)_{i \geq 1}$ distributed according to $\mu$. It is well known  that under appropriate assumptions, the minimizers of $f$ can be approximated through the recursive stochastic algorithm $(\theta_n)_{n\ge0}$ defined by:
$\theta_0\in\R^d$ and 
\begin{equation}\label{eq:gs}
\forall n\ge0,\quad \theta_{n+1}=\theta_n-\gamma_{n+1}  \Lambda (\theta_n,Z_{n+1}),
\end{equation}
where $(\gamma_n)_{n\ge1}$ denotes a non-increasing sequence of positive numbers such that:
$$\Gamma_n:=\sum_{k=1}^n\gamma_k \xrightarrow{n \longrightarrow +\infty} +\infty\quad \text{and} \quad \gamma_n\xrightarrow{n \longrightarrow +\infty} 0.$$

The standard averaging procedure of Ruppert-Polyak (referred to as RP averaging) consists in introducing a Cesaro average over the iterations of the Robbins-Monro sequence defined by:
$$
\htn=\frac{1}{n} \sum_{k=1}^n \theta_k,\quad n\ge1.
$$
It is well known that such an averaging procedure is a way to improve the  convergence properties of the original algorithm  $(\theta_n)_{n \geq 1}$ by minimizing the asymptotic variance  induced by the algorithm. More precisely, when $f$ is a strongly convex function and possesses a unique minimum $\theta^\star$, $(\htn)_{n \geq 1}$, $ \sqrt{n}(\htn-\theta^\star)_{n\ge1}$ converges in distribution to a Gaussian law whose variance attains the Cramer-Rao lower bound of any (oracle) unbiased estimation of $\theta^\star$
  (see Theorem \ref{tcl} for a precise statement of this state of the art result).

Such results are generally achieved \textit{asymptotically} in general situations where $f$ is assumed to be strongly convex, we refer to \cite{polyakjuditsky} for the initial asymptotic description and to \cite{FortG} for some more general results. 
In \cite{BachMoulines}, a non-asymptotic optimal (with a sharp first order term) result is obtained in the strongly convex situation under restrictive moment assumptions on the noisy gradients. 
It is also dealt with non asymptotically without sharp constants in some specific cases where such a strong convexity property fails (on-line logistic regression \cite{BachLogit}, recursive median estimation \cite{Cenac,Godichon} 
for example). Nevertheless, a general result for strongly convex or not situations under some mild conditions on the noise while preserving a sharp optimal $O(n^{-1})$ rate of convergence of the $\mathbb{L}^2$-risk is yet missing.

In this paper, our purpose is to derive a sharp study on the $\mathbb{L}^2$-risk  of the averaged process $(\htn)_{n \geq 0}$ and derive an optimal variance result, which depends on the Hessian of $f$ at $\theta^\star$ without restricting ourself to the strongly convex or even convex case. To do so, we will introduce a weak assumption on $f$ that generalizes a global Kurdyka-\L ojasiewicz inequality assumption on $f$ (see \cite{Lojasiewicz,Kurdyka}).
We are also interested in the adaptivity of $(\htn)_{n \geq 0}$: the ability of the algorithm to behave optimally and independently on the local value of the Hessian of $f$ at $\theta^{\star}$. We also
alleviate the convexity conditions on $f$ under which such bounds can be achieved.

\subsection{Polyak-Juditsky central limit theorem}
To assess the quality of a non-asymptotic control of the sequences $(\htn)_{n \geq 0}$, we recall the CLT  associated with $(\htn)_{n \geq 0}$, whose statement is adapted from \cite{polyakjuditsky}\footnote{In \cite{polyakjuditsky}, the result is stated in a slightly more general framework with the help of a Lyapunov function. We have chosen to simplify the statement for the sake of readability.
} with the strongly convex assumption $\mathbf{(H_{\sca})}$ commonly used in the optimization community: \\

\noindent
\textbf{Assumption $\mathbf{H_{\sca}}$ - } \textbf{Strongly convex function}
\textit{$f$ is a strongly convex function of parameter $\alpha>0$ in the set:
\begin{equation}\label{def:sca}
\sca := \left\{ f \in \mathcal{C}^2(\R^d) \, : D^2f - \alpha I_d \geq 0 \right\}\end{equation}
where $D^2 f$ stands for the Hessian matrix of $f$ and inequality $A \geq 0$ for any matrix $A$ has to be understood in the sense of quadratic forms. }

The set $\sca$ captures many practical situations such as the least square optimization problem in statistical linear models for example.

\begin{thm}[\label{tcl}\cite{polyakjuditsky}]
Assume that:
\begin{itemize}
\item[i)] the function $f$ is in $\sca$ and $x\longmapsto D^2f (x)$ is bounded.
\item[ii)] $\gamma_n \xrightarrow{n \rightarrow + \infty} 0$ and $\gamma_n^{-1}(\gamma_n-\gamma_{n+1}) = o_{+\infty}(\gamma_n)$,
\item[iii)] the convergence in probability of the conditional covariance holds, \textit{i.e.},
$$
\lim_{n \longrightarrow + \infty} \mathbb{E}[\Delta M_{n+1} \Delta M_{n+1}^T \vert \mathcal{F}_{n}]  = S^\star,
$$
\end{itemize}   
then
$$
\sqrt{n} (\htn - \theta^\star) \xrightarrow{\mathcal{L}, n \longrightarrow + \infty} \mathcal{N}(0,\Sigma^\star),
$$
where 
\begin{equation}\label{def:Sigma2}
\Sigma^\star = \{D^2 f(\theta^{\star})\}^{-1} S^\star \{D^2 f(\theta^{\star})\}^{-1}.
\end{equation}
\end{thm}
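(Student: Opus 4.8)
This is the classical Polyak--Juditsky theorem, and the plan is the standard route: linearize the Robbins--Monro recursion near $\theta^\star$, isolate a martingale sum, and invoke a martingale CLT. Write $\Delta_n:=\theta_n-\theta^\star$ and introduce the noise term $\Delta M_{n+1}:=\nabla f(\theta_n)-\Lambda(\theta_n,Z_{n+1})$; since $Z_{n+1}$ is independent of $\mathcal F_n$ and $\E[\Lambda(\theta,Z)]=\nabla f(\theta)$, the sequence $(\Delta M_{n+1})_n$ is a martingale-increment sequence and \eqref{eq:gs} reads $\Delta_{n+1}=\Delta_n-\gamma_{n+1}\nabla f(\theta_n)+\gamma_{n+1}\Delta M_{n+1}$. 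Because $\nabla f(\theta^\star)=0$ and $f\in\mathcal C^2$, a Taylor expansion of the gradient gives $\nabla f(\theta_n)=(H+\Psi_n)\Delta_n$, where $H:=D^2f(\theta^\star)$ and $\Psi_n:=\int_0^1\bigl(D^2f(\theta^\star+t\Delta_n)-H\bigr)\,dt$; under $\mathbf{H_{\sca}}$ one has $H\geq\alpha I_d$, so $H$ is invertible, and since $D^2f$ is bounded the matrices $\Psi_n$ are uniformly bounded and converge to $0$ as soon as $\theta_n\to\theta^\star$.

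The first technical step is the non-averaged stability estimate: $\theta_n\to\theta^\star$ (a.s.\ and in $\LL$) together with $\E\lvert\Delta_n\rvert^2=O(\gamma_n)$. I would derive this from the one-step inequality
\[
\E\bigl[\lvert\Delta_{n+1}\rvert^2\mid\mathcal F_n\bigr]\;\leq\;\bigl(1-\alpha\gamma_{n+1}\bigr)\,\lvert\Delta_n\rvert^2+\gamma_{n+1}^2\,\E\bigl[\lvert\Delta M_{n+1}\rvert^2\mid\mathcal F_n\bigr],
\]
valid for $n$ large using strong convexity for the cross term $\langle\Delta_n,\nabla f(\theta_n)\rangle\geq\alpha\lvert\Delta_n\rvert^2$, the bound $\lvert\nabla f(\theta_n)\rvert\lesssim\lvert\Delta_n\rvert$ coming from the boundedness of $D^2f$, and the fact that condition iii) makes $\E[\lvert\Delta M_{n+1}\rvert^2\mid\mathcal F_n]$ eventually bounded; a Robbins--Monro/Chung-type deterministic lemma then gives the announced rate.

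The core of the argument is to solve the linearized recursion for $H\Delta_n$, divide by $\gamma_{n+1}$ and apply $H^{-1}$, which yields
\[
\Delta_n=H^{-1}\,\frac{\Delta_n-\Delta_{n+1}}{\gamma_{n+1}}-H^{-1}\Psi_n\Delta_n+H^{-1}\Delta M_{n+1}.
\]
Averaging over $n=1,\dots,N$ and rescaling by $\sqrt N$ then expresses $\sqrt N(\hat\theta_N-\theta^\star)$ as $H^{-1}$ times the sum of three contributions: the renormalized martingale sum $N^{-1/2}\sum_{n=1}^N\Delta M_{n+1}$, a telescoping term $N^{-1/2}\sum_{n=1}^N\gamma_{n+1}^{-1}(\Delta_n-\Delta_{n+1})$, and a curvature term $-N^{-1/2}\sum_{n=1}^N\Psi_n\Delta_n$. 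For the telescoping term I would use Abel summation: the boundary contributions reduce to $\Delta_1/\gamma_2$ (deterministic) and $\Delta_{N+1}/\gamma_{N+1}$, controlled through $\E\lvert\Delta_{N+1}\rvert^2=O(\gamma_{N+1})$ and $N\gamma_N\to\infty$, while the residual sum $\sum_n(\gamma_{n+1}^{-1}-\gamma_n^{-1})\Delta_n$ is handled using $\gamma_n^{-1}(\gamma_n-\gamma_{n+1})=o(\gamma_n)$ from ii) together with $\E\lvert\Delta_n\rvert^2=O(\gamma_n)$. For the curvature term, $\lVert\Psi_n\rVert$ is bounded and $\Psi_n\to0$, so $\E\lVert\Psi_n\rVert^2\to0$ and $\E\lvert\Psi_n\Delta_n\rvert\leq(\E\lVert\Psi_n\rVert^2)^{1/2}(\E\lvert\Delta_n\rvert^2)^{1/2}=o(\sqrt{\gamma_n})$; both renormalized correction sums thus vanish in probability.

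It then remains to treat the martingale sum. By Cesaro averaging of condition iii), $N^{-1}\sum_{n=1}^N\E[\Delta M_{n+1}\Delta M_{n+1}^T\mid\mathcal F_n]\to S^\star$, and together with a Lindeberg condition (which follows from the uniform integrability of the squared noise implied by iii)) the martingale central limit theorem gives $N^{-1/2}\sum_{n=1}^N\Delta M_{n+1}\xrightarrow{\mathcal L}\mathcal N(0,S^\star)$. Slutsky's lemma then yields $\sqrt N(\hat\theta_N-\theta^\star)\xrightarrow{\mathcal L}H^{-1}\mathcal N(0,S^\star)=\mathcal N(0,H^{-1}S^\star H^{-1})$, which is \eqref{def:Sigma2} as $H$ is symmetric. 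The step I expect to be the main obstacle is the negligibility of the two correction terms: this is exactly where the step-size condition ii), the sharp non-averaged rate $\E\lvert\Delta_n\rvert^2=O(\gamma_n)$, and the length $N$ of the averaging window must be balanced against one another, and it is the place where going beyond polynomial step sizes requires the most care.
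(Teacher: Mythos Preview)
The paper does not prove this theorem: Theorem~\ref{tcl} is stated with attribution to \cite{polyakjuditsky} and is used purely as background to motivate the non-asymptotic results (Theorems~\ref{thm:avgd} and~\ref{theo:weaksgd}). There is therefore no ``paper's own proof'' to compare your proposal against.

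Your sketch is precisely the classical Polyak--Juditsky route: linearize the gradient at $\theta^\star$, invert $H=D^2f(\theta^\star)$, average, and split $\sqrt{N}(\hat\theta_N-\theta^\star)$ into a martingale sum plus two correction terms (Abel/telescoping and curvature). This is the right strategy and is essentially how the original reference proceeds. Two points in your outline are looser than they should be. First, condition~iii) as stated here (convergence in probability of the conditional covariance) does not by itself imply the Lindeberg condition; in \cite{polyakjuditsky} an additional uniform-integrability or moment assumption is imposed, and the statement in the present paper is admittedly a streamlined version (see the footnote). Second, your bound $\E\lvert\Psi_n\Delta_n\rvert=o(\sqrt{\gamma_n})$ is not summable after division by $\sqrt{N}$: for $\gamma_n\asymp n^{-\beta}$ with $\beta<1$ one has $N^{-1/2}\sum_{n\le N}\sqrt{\gamma_n}\to\infty$. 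The standard fix is to exploit that $\Psi_n\to0$ \emph{almost surely} and combine it with the $L^2$ rate on $\Delta_n$ via a truncation or a Kronecker-type argument, rather than a crude Cauchy--Schwarz bound term by term.
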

\noindent
Theorem \ref{tcl} shows that the Ruppert-Polyak averaging produces an asymptotically optimal algorithm whose rate of convergence  is $O(n^{-1})$, which is minimax optimal in the class of strongly convex stochastic minimization problems (see, \textit{e.g.}, \cite{NemirovskiYudin}). Moreover, the asymptotic variance is also optimal because it attains the Cramer-Rao lower bound (see \textit{e.g.} \cite{polyakjuditsky,Casella}).

It is also important to observe that $(\htn)_{n \geq 0}$ is an adaptive sequence since the previous result is obtained independently of the size of $D^2 f(\theta^\star)$ as soon as the sequence $(\gamma_n)_{n  \geq 1}$ is chosen as $\gamma_n = \gamma n^{-\beta}$ with $\beta \in (0,1)$. 

\subsection{Main results}

As pointed out by many authors in some recent works (we refer to \cite{BachMoulines}, \cite{BachLogit} and \cite{Cenac},  among others), even though very general,   Theorem \ref{tcl} has the usual drawback of being only asymptotic with respect to $n$. To bypass this weak quantitative result, some improvements are then obtained for various particular cases of minimization problems (\textit{e.g.}, logistic regression, least square minimization, median and quantile estimations) in the literature. 

Below, we are interested in deriving some non-asymptotic inequality results on the RP averaging for the minimization of $f$. We therefore establish some non-asymptotic mean squared error upper bounds for $(|\htn-\theta^\star|_2^2)_{n \geq 1}$. We  also investigate some more specific situations without any strong convexity property (quantile estimation and logistic regression). In each case, we are interested in the first-order term of the limiting variance involved in Theorem \ref{tcl} (and in the Cramer-Rao lower bound as well).

\subsubsection{Notations}

The canonical filtration $(\mathcal{F}_n)_{n \geq 1}$ refers to the filtration associated with the past events before time $n+1$: $\mathcal{F}_n = \sigma(\theta_1,\ldots,\theta_n)$ for any $n \geq 1$. The conditional expectation at time $n$ is then denoted by $\mathbb{E}[ . \vert \mathcal{F}_n]$.

For any vector $y \in \R^d$, $y^T$ denotes the transpose of $y$, whereas $|y|$ is the Euclidean norm of $y$ in $\R^d$. The set $\mathcal{M}_{d}(\R)$ refers to the set of squared real matrices of size $d \times d$ and the tensor product $\otimes 2$ is used to refer to the following  quadratic form:
$$
\forall M \in \mathcal{M}_{d}(\R) \quad \forall y \in \R^d \qquad M y^{\otimes 2} = y^T M y.
$$
$I_d$ is the identity matrix in $\mathcal{M}_{d}(\R)$ and  $\mathcal{O}_d(\R)$ denotes the set of orthonormal real matrices of size $d \times d$:
$$
\mathcal{O}_d(\R) := \left\{ Q \in \mathcal{M}_d(\R) \, : \, Q^T Q = I_d\right\}.
$$
Finally, the notation $\|\, .\,\|$ corresponds to a (non-specified) norm on $\mathcal{M}_{d}(\R)$. 

For two positive sequences $(a_n)_{n \geq 1}$ and $(b_n)_{n \geq 1}$, the notation $a_n \lesssim b_n$ refers to the {domination relationship, $i.e.$ $a_n \leq c \, b_n$ where $c$ is independent of $n$. The binary relationship $a_n = \mathcal{O}(b_n)$ then holds if and only if $|a_n| \lesssim |b_n|$. Finally, if for all $n\in\mathbb{N}$, $b_n\neq0$, $a_n = o(b_n)$ if $ \lim  \frac{a_n}{b_n} = 0$ when $n \longrightarrow +\infty$.\medskip

In the rest of the paper, we  assume that  $f$ satisfies the following properties:
\begin{equation}\label{condbase}
\lim_{|x|\rightarrow+\infty} f(x)=+\infty\quad \textnormal{and}\quad \{x\in\ER^d,\nabla f(x)=0\}=\{\theta^\star\}
\end{equation} 
where $\theta^\star$ is thus the unique minimum of $f$. Without loss of generality, we also assume that $f(\theta^\star)=0$. 


We also consider the common choice for $(\gamma_n)_{n \geq 1}$ (for $\gamma>0$ and $\beta \in (0,1)$):
$$
\forall n \geq 1 \qquad \gamma_n = \gamma n^{-\beta}.
$$
In particular, we have $\Gamma_n \sim \frac{\gamma}{1-\beta} n^{1-\beta} \longrightarrow + \infty$ and $\gamma_n \longrightarrow 0$ as $n \longrightarrow + \infty$.\bigskip

The rest of this section is devoted to the statement of our main results. In Subsection \ref{sec:main1}, we state our main general result (Theorem \ref{thm:avgd}) under some general assumptions on the noise part and on the behavior of the $L^p$-norm of the \textbf{original procedure} $(\theta_n)_{n \geq 1}$  (\textit{$(L^{p}$,$\sqrt{\gamma_n})$-consistency}). Then, in the next subsections, we provide some settings where this consistency condition is satisfied: under a strong convexity assumption in Subsection \ref{sec:main2} and under a Kurdyka-\L ojiasewicz-type assumption  in Subsection \ref{sec:main3}.
\subsubsection{Non asymptotic adaptive and optimal inequality}\label{sec:main1}
Our first main result is Theorem \ref{thm:avgd} and we introduce  the following definition.

\begin{defi}[$(L^{p}$,$\sqrt{\gamma_n})$-consistency]\label{defi:Lp}
Let $p>0$. We say that a sequence $(\tn)_{n \geq 1}$ satisfies the $(L^{p}$,$\sqrt{\gamma_n})$-consistency (convergence rate condition) if
$\left(\frac{\tn}{\sqrt{\gamma_n}}\right)_{n\ge1}$ is bounded in $L^p$, $i.e.$, if:
$$
\exists \, c_p >0 \quad \forall n \geq 1 \qquad  \mathbb{E} |\tn|^{p} \leq c_p \{\gamma_n\}^\frac{p}{2}.
$$
\end{defi}
\noindent 
Note that according to the Jensen inequality, the $(L^{p}$,$\sqrt{\gamma_n})$-consistency implies the $(L^{q}$,$\sqrt{\gamma_n})$-consistency for any $0<q<p$. As mentioned before,
this definition refers to the behaviour of the crude procedure $(\tn)_{n \geq 1}$ defined by Equation \eqref{eq:gs}.  We will prove that Definition \ref{defi:Lp} is a key property to derive non-asymptotic bounds for the RP-averaged algorithm $(\htn)_{n\ge1}$ (see Theorem  \ref{thm:avgd} below).\smallskip
%
%
%

We also introduce a smoothness assumption on the covariance of the martingale increment:\\

\noindent
\textbf{Assumption $\mathbf{(H_S)}$ - } \textbf{Covariance of the martingale increment}
\textit{The covariance of the martingale increment satisfies:
$$
\forall \theta \in \mathbb{R}^d \qquad
\ES \left[ \Delta M_{n+1} \Delta M_{n+1}^t \vert \mathcal{F}_n \right] =
 S(\theta_n) \qquad a.s.
$$
where $S:\ER^d\rightarrow{\cal M}_d(\ER)$ is a Lipschitz continuous function:
$$
\exists L >0 \quad 
\forall (\theta_1,\theta_2) \in \mathbb{R}^d \qquad \|S(\theta_1)-S(\theta_2)\| \leq L |\theta_1-\theta_2|.
$$}

When compared
 to   Theorem \ref{tcl},  Assumption  $\mathbf{(H_S)}$ is more restrictive but in fact corresponds to the usual framework. 
Under additional technicalities, this assumption may be relaxed to a local Lipschitz behaviour of $S$. For reasons of clarity, we preferred to reduce our purpose to this reasonable setting.

 

We now state our main general result:

\begin{thm}[Optimal non-asymptotic bound for the averaging procedure\label{thm:avgd}]
 Let $\gamma_n=\gamma n^{-\beta}$ with $\beta \in (1/2,1)$. Assume that $(\tn)_{n \geq 1}$ is $(L^4$,$\sqrt{\gamma_n})$-consistent and that Assumption $(\mathbf{H}_{S})$ holds. Suppose moreover that $D^2f (\theta^\star)$ is  \tcr{positive-definite}.Then, a large enough $C$  exists such that:
$$
\forall n \in \mathbb{N}^\star \qquad 
\mathbb{E} \left[|\htn-\theta^\star|^2 \right] \leq \frac{{\rm Tr}(\Sigma^\star)}{n}+C n^{-r_\beta},
$$
where $\Sigma^\star$ is defined in Equation \eqref{def:Sigma2} (with $S^\star=S(\theta^\star)$) and 
$$r_\beta= \left(\beta+\frac 1 2 \right) \wedge \left(2-\beta\right).$$ In particular, $r_\beta>1$  for all $\beta\in\left({1}/{2},1\right)$ and $\beta\longmapsto r_\beta$ attains its maximum for $\beta=3/4$, which yields:
$$
\forall n \in \mathbb{N}^\star \qquad 
\mathbb{E} \left[|\htn-\theta^\star|^2 \right] \leq \frac{{\rm Tr}(\Sigma^\star)}{n}+C n^{-5/4}.
$$
\end{thm}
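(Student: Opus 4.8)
The plan is to track the RP-averaged error through the usual Polyak--Juditsky decomposition, but keeping all terms at a non-asymptotic level of precision. Starting from the Robbins--Monro recursion $\theta_{n+1}=\theta_n-\gamma_{n+1}\Lambda(\theta_n,Z_{n+1})$, write $\Lambda(\theta_n,Z_{n+1})=\nabla f(\theta_n)-\Delta M_{n+1}$, where $\Delta M_{n+1}$ is the martingale increment with conditional covariance $S(\theta_n)$ under $(\mathbf{H}_S)$. A second-order Taylor expansion of $\nabla f$ around $\theta^\star$ gives $\nabla f(\theta_n)=D^2 f(\theta^\star)(\theta_n-\theta^\star)+r_n$ with $|r_n|\lesssim|\theta_n-\theta^\star|^2$. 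Substituting and summing over $k=1,\dots,n$, the first-order (linear) part telescopes: one gets $D^2 f(\theta^\star)\,\sum_{k=1}^n(\theta_k-\theta^\star)=\sum_{k=1}^n\big(\gamma_{k+1}^{-1}(\theta_k-\theta_{k+1})+\Delta M_{k+1}-r_k\big)$, hence
\begin{equation*}
\widehat\theta_n-\theta^\star=\frac{1}{n}\{D^2 f(\theta^\star)\}^{-1}\Big(\underbrace{\sum_{k=1}^n\Delta M_{k+1}}_{\text{main Gaussian term}}+\underbrace{\sum_{k=1}^n\Big(\frac{\theta_k-\theta_{k+1}}{\gamma_{k+1}}\Big)}_{\text{telescoping remainder}}-\underbrace{\sum_{k=1}^n r_k}_{\text{curvature remainder}}\Big).
\end{equation*}
I would then compute $\E[|\widehat\theta_n-\theta^\star|^2]$ by expanding this square and bounding each block separately.

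The core term is $\tfrac1{n^2}\,\E\big|\{D^2 f(\theta^\star)\}^{-1}\sum_{k=1}^n\Delta M_{k+1}\big|^2$. Since the $\Delta M_{k+1}$ form a martingale difference sequence, the cross terms vanish and this equals $\tfrac1{n^2}\sum_{k=1}^n\E\big[\{D^2 f(\theta^\star)\}^{-1}S(\theta_k)\{D^2 f(\theta^\star)\}^{-1}\big]$ traced out. Here is where the two hypotheses combine: by the Lipschitz property of $S$ in $(\mathbf{H}_S)$, $\|\E S(\theta_k)-S(\theta^\star)\|\le L\,\E|\theta_k-\theta^\star|\le L\sqrt{\E|\theta_k-\theta^\star|^2}\lesssim\sqrt{\gamma_k}$ by the $(L^2,\sqrt{\gamma_n})$-consistency (itself implied by $L^4$-consistency via Jensen). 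Thus $\tfrac1{n^2}\sum_{k=1}^n\operatorname{Tr}\big(\{D^2f(\theta^\star)\}^{-1}\E S(\theta_k)\{D^2f(\theta^\star)\}^{-1}\big)=\tfrac{\operatorname{Tr}(\Sigma^\star)}{n}+O\big(\tfrac1{n^2}\sum_{k=1}^n\sqrt{\gamma_k}\big)=\tfrac{\operatorname{Tr}(\Sigma^\star)}{n}+O(n^{-1-\beta/2})$, using $\gamma_k=\gamma k^{-\beta}$ so $\sum_{k\le n}\sqrt{\gamma_k}\asymp n^{1-\beta/2}$. This already produces one of the two competing exponents, $1+\beta/2$, but note $1+\beta/2<\beta+\tfrac12$ exactly when $\beta>1$, so in the range $\beta\in(1/2,1)$ this contributes the dominant $O(n^{-1-\beta/2})$ — wait, one must be careful: $1+\beta/2$ versus $\beta+1/2$ gives $1+\beta/2>\beta+1/2\iff 1/2>\beta/2\iff\beta<1$, so in fact $n^{-1-\beta/2}$ is \emph{smaller} than $n^{-\beta-1/2}$ and is absorbed. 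Good; this term is harmless.

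Next, the telescoping remainder $\sum_{k=1}^n\gamma_{k+1}^{-1}(\theta_k-\theta_{k+1})$ should be reorganised by Abel summation into $\gamma_2^{-1}\theta_1^\star-\gamma_{n+1}^{-1}(\theta_{n+1}-\theta^\star)+\sum_{k}(\gamma_{k+1}^{-1}-\gamma_k^{-1})(\theta_k-\theta^\star)$ (writing $\theta_k^\star:=\theta_k-\theta^\star$); using $|\gamma_{k+1}^{-1}-\gamma_k^{-1}|\lesssim k^{\beta-1}$ and $\E|\theta_k^\star|^2\lesssim\gamma_k$, the $L^2$ norm of this sum is $O\big(\sum_k k^{\beta-1}k^{-\beta/2}\big)=O(n^{\beta/2})$, and the boundary term $\gamma_{n+1}^{-1}\theta_{n+1}^\star$ has $L^2$ norm $O(n^{\beta}\cdot n^{-\beta/2})=O(n^{\beta/2})$; dividing by $n$ and squaring gives $O(n^{-2+\beta})=O(n^{-(2-\beta)})$, the second competing exponent. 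The curvature remainder $\tfrac1n\{D^2f(\theta^\star)\}^{-1}\sum_k r_k$ satisfies $\E\big|\tfrac1n\sum_k r_k\big|\lesssim\tfrac1n\sum_k\E|\theta_k^\star|^2\lesssim\tfrac1n\sum_k\gamma_k\asymp n^{-\beta}$, but one needs the \emph{square}, so either use $\E\big|\tfrac1n\sum r_k\big|^2\le\tfrac1n\sum\E|r_k|^2\lesssim\tfrac1n\sum\E|\theta_k^\star|^4\lesssim\tfrac1n\sum\gamma_k^2\asymp n^{-2\beta}$ — this is where the full $L^4$-consistency (not just $L^2$) is essential — giving $O(n^{-2\beta})$, and $2\beta>\beta+1/2\iff\beta>1/2$, so again absorbed. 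Finally the cross terms between these blocks are controlled by Cauchy--Schwarz from the individual bounds. Collecting everything, the error is $\tfrac{\operatorname{Tr}(\Sigma^\star)}{n}+O\big(n^{-(\beta+1/2)}+n^{-(2-\beta)}\big)=\tfrac{\operatorname{Tr}(\Sigma^\star)}{n}+O(n^{-r_\beta})$ with $r_\beta=(\beta+\tfrac12)\wedge(2-\beta)$, which is $>1$ for $\beta\in(1/2,1)$ and maximised at $\beta+\tfrac12=2-\beta$, i.e. $\beta=3/4$, $r_\beta=5/4$. The main obstacle is the careful non-asymptotic bookkeeping of the Abel-summation remainder and the cross terms: making sure the step-increment factors $\gamma_{k+1}^{-1}-\gamma_k^{-1}\asymp k^{\beta-1}$ and the consistency rate $\E|\theta_k^\star|^2\lesssim\gamma_k$ combine to exactly $n^{\beta/2}$ and not something worse, and verifying the boundary term $\gamma_{n+1}^{-1}\theta_{n+1}^\star$ really is $O_{L^2}(n^{\beta/2})$ rather than generating a term that breaks the $O(n^{-1})$ leading order — which is precisely why $\beta>1/2$ is required.
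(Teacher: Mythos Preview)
Your decomposition is the classical Polyak--Juditsky one and is genuinely different from the paper's approach: the paper does \emph{not} sum and invert $D^2f(\theta^\star)$, but instead writes the coupled pair $Z_n=(\theta_n,\hat\theta_n)$ as a $2d$-dimensional linear recursion (up to a remainder), block-diagonalises the transition matrix via the spectral decomposition of $D^2f(\theta^\star)$, and tracks recursively the variance of the second block and the covariance $\omega_n(i)=\E[(\tilde Z_n^{(1)})_i(\tilde Z_n^{(2)})_i]$. The exponent $2-\beta$ emerges from the $O(n^{-3}\gamma_n^{-1})$ spectral-drift term, and $\beta+\tfrac12$ from the $O(\gamma_n n^{-3/2})$ linearisation error $\E[|\theta_n|^2|\tilde Z_n^{(2)}|]$; no cross-term Cauchy--Schwarz is ever invoked. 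Your route is more elementary and closer to Bach--Moulines, but it trades the algebraic bookkeeping of the change of basis for a delicate covariance analysis that you have not carried out.

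Concretely, there is a genuine gap in your treatment of the cross terms. The martingale block $\tfrac1n H^{-1}\sum_k\Delta M_{k+1}$ has $L^2$ norm $\asymp n^{-1/2}$, and the telescoping block $\tfrac1n H^{-1}B_n$ has $L^2$ norm $O(n^{\beta/2-1})$; Cauchy--Schwarz therefore only gives $O(n^{-(3-\beta)/2})$ for their cross term. For $\beta>2/3$ this is \emph{strictly larger} than $n^{-r_\beta}$ (at $\beta=3/4$ you get $n^{-9/8}$, not $n^{-5/4}$), so your argument as written proves only $r_\beta'=(3-\beta)/2\wedge(\beta+\tfrac12)\wedge(2-\beta)$, optimised at $\beta=2/3$ with value $7/6$. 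To recover the stated $r_\beta$ along your route you must exploit the martingale structure of the cross term directly: after Abel summation $B_n$ involves $\gamma_{n+1}^{-1}\theta_{n+1}^\star$ and $\sum_k(\gamma_{k+1}^{-1}-\gamma_k^{-1})\theta_k^\star$, and one has to show $\big|\E[A_n^T H^{-2}\theta_k^\star]\big|=O(1)$ uniformly in $k\le n$ (by expanding $\theta_k^\star$ along the linearised recursion and using $\sum_j\gamma_j e^{-\mu(\Gamma_k-\Gamma_j)}=O(1)$), which then yields $O(n^{\beta-2})=O(n^{-(2-\beta)})$ for the full cross term. This is precisely the work done in the paper via the recursion for $\omega_n(i)$ and Lemma~A.3.

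A smaller slip: your Jensen step $\E|\tfrac1n\sum r_k|^2\le\tfrac1n\sum\E|r_k|^2\lesssim\tfrac1n\sum\gamma_k^2$ gives $O(n^{-1})$, not $O(n^{-2\beta})$, since $\sum_k k^{-2\beta}$ converges for $\beta>1/2$. Use Minkowski instead: $\|\tfrac1n\sum r_k\|_2\le\tfrac1n\sum\|r_k\|_2\lesssim\tfrac1n\sum\gamma_k\asymp n^{-\beta}$, whose square is $O(n^{-2\beta})$ as you claimed.
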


 \smallskip

The result stated by Theorem \ref{thm:avgd} deserves three main remarks. \smallskip

$\bullet$ First, we obtain the exact optimal rate $O(n^{-1})$ with the sharp constant ${\rm Tr}(\Sigma^\star)$ as shown by Theorem \ref{tcl}. Hence, at the first order, Theorem \ref{thm:avgd} shows that the averaging procedure is minimax optimal with respect to the Cramer-Rao lower bound. Moreover, the result is adaptive with respect to the value of the Hessian $D^2 f(\theta^\star)$: any sequence $\gamma_n = \gamma n^{-\beta}$ with $\beta \in (1/2,1)$ and $\gamma>0$, regardless the value of $\beta$ or $\gamma$, produces the result of Theorem \ref{thm:avgd}. We should note that such an adaptive property does not hold for the initial sequence $(\tn)_{n \geq 1}$ as proved by the central limit theorem satisfied by $(\tn)_{n \geq 1}$ (see \cite{Duflo} for example). To the best of our knowledge, such a result \tcr{was} only obtained in \cite{BachMoulines} for strongly convex objective functions. \smallskip

$\bullet$ Second, Theorem \ref{thm:avgd} does not require any convexity \tcr{assumptions} \tcb{on} $f$.
 However, this formulation is misleading by itself since we instead assume a  $(L^{p}$,$\sqrt{\gamma_n})$-consistency for the initial sequence $(\tn)_{n \geq 1}$. We will discuss  how we can guarantee such a property in Theorem \ref{thm:sgd} in convex situations or in some more general cases. The conclusion of Theorem \ref{thm:avgd} holds as soon as $(\tn)_{n \geq 1}$ satisfies $\mathbb{E}[|\tn-\theta^\star|^4] \leq C \{\gamma_n\}^2$, which permits us to efficiently linearize the drift of the sequence $(\htn)_{n \geq 1}$.
 
 Our proof is quite different from the one of \cite{BachMoulines}, which gives an optimal result only in the strongly convex case, whereas the rate is seriously damaged in the non-strongly convex situation (Theorems 3 and  6 of \cite{BachMoulines}). In contrast, our result can also apply to some non-strongly convex objective functions while preserving the optimal upper bound, and holds under much weaker conditions on the noise setting.
 Our improvement is achieved through  a spectral analysis of the  second-order Markov chain induced by  $(\htn)_{n \geq 1}$. This spectral analysis requires a preliminary linearization step of the drift from $\htn$ to $\htnp$. The cost of this linearization is absorbed by a preliminary control of the initial sequence $(\tn)_{n \geq 1}$, $(L^{p}$,$\sqrt{\gamma_n})$-consistency  for $p=4$ (see Proposition \ref{prop:sgdsc} and  Theorem \ref{theo:weaksgd}  for results on the $(L^{p}$,$\sqrt{\gamma_n})$-consistency).
Note that this $(L^{p}$,$\sqrt{\gamma_n})$-consistency  for $p=4$ is also a property used in \cite{Godichon} and \cite{BachMoulines}.

$\bullet$   Finally, we prove that the second order term is $O(n^{-r_\beta})$ and that its size is minimized according to the choice $\beta=3/4$. With this optimal calibration of 
$\beta$, the size of the second-order term is $n^{-5/4}$ \tcr{in the general case}. \tcr{(As mentioned in Remark \ref{rem:linearcase}, this bound can be improved if the third derivative of $f$ vanishes in $\theta^\star$}. In the literature, several choices for the value of $\beta$ have been proposed. In the particular case of the recursive quantile problem (see \textit{e.g.} \cite{Godichon}), the authors suggest to use $\beta=2/3$ to minimize the second-order terms without any explicit quantitative result.
For strongly convex functions, it is indicated in \cite{BachMoulines} to also use $\beta=2/3$ and the second order term obtained in \cite{BachMoulines} is $n^{-7/6}$, which is larger than $n^{-5/4}$. Even though the second-order terms are of  marginal importance, Theorem \ref{thm:avgd} provides stronger results than Theorem 3 of \cite{BachMoulines} and results stated in \cite{Cenac,Godichon}.
 It also appears  that the condition $\beta>1/2$ and $\beta<1$ is necessary to obtain the tight constant ${\rm Tr}(\Sigma^\star)$, and the choice $\beta=1/2$ does not seem appropriate in a general situation according to what is claimed in \cite{BachMoulines} and contrary to what is claimed in \cite{BachLogit} in the particular case of the on-line logistic regression.

\subsubsection{$(L^{p}$,$\sqrt{\gamma_n})$-consistency with strong convexity}\label{sec:main2}
In this section, we temporarily restrict our study to the classical setting  $\mathbf{H_{\sca}}$ and we need to add an additional condition on the noise, denoted by $\mathbf{(H^{SC}_{\Sigma_p})}$:


\noindent 
\textbf{Assumption $\mathbf{(H^{SC}_{\Sigma_p})}$ - }\textbf{Moments of the martingale increment}
\textit{For a given $p \in \mathbb{N}^\star$, the sequence of martingale increments satisfies: a constant $\Sigma_p$ exists such that for any $n\in\mathbb{N}$:
$$
\ES[|\Delta M_n|^{2p}|{\cal F}_n]\leq \Sigma_p (1+(f(\tn))^{p} \qquad \text{a.s.}
$$ }

We emphasize that even though Assumption $\mathbf{H_{\sca}}$ is a potentially restrictive assumption on $f$, the one on the martingale increments is not restrictive and allows 
a polynomial dependency in $f(\tn)$ of the moments of $\Delta M_n$, which is much weaker than the one used in Theorem 3 of \cite{BachMoulines}.
For example, such  an assumption holds  in the case of the recursive linear least square problem. In that case, we retrieve the baseline assumption introduced in \cite{Duflo} that only provides an almost sure convergence of $(\tn)_{n \geq 1}$ towards $\theta^\star$ without any rate.
%
In this setting, we can state the following proposition, whose proof is left to the reader and up to some minor modifications, is contained in the more general result stated in Theorem \ref{thm:sgd} (see Section \ref{sec:main3}).
\begin{prop}\label{prop:sgdsc}
Assume that $\alpha >0$ exists such that $f$ is $\mathbf{H_{\sca}}$ and that  $x \longmapsto D^2 f(x)$ is Lipschitz bounded. Then, if the sequence $(\Delta M_n)_{n \geq 1}$ satisfies $\mathbf{(H^{SC}_{\Sigma_p})}$, then $(\tn)_{n \geq 1}$ is $(L^{p}$,$\sqrt{\gamma_n})$-consistent: a constant $C_p$ exists such that:
$$
\mathbb{E} |\tn-\theta^\star|^{p} \leq C_p \{\gamma_n\}^{p/2}.
$$
\end{prop}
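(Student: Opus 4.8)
The plan is to establish an $L^p$ moment bound for $V_n := f(\theta_n)$ (which controls $|\theta_n-\theta^\star|^2$ from below by strong convexity and from above by the boundedness of $D^2 f$, so $\mathbb{E}|\theta_n-\theta^\star|^p \lesssim \mathbb{E} V_n^{p/2}$), and then iterate a one-step recursion. First I would write, using a Taylor expansion of $f$ around $\theta_n$ and the update $\theta_{n+1}=\theta_n-\gamma_{n+1}\Lambda(\theta_n,Z_{n+1})=\theta_n-\gamma_{n+1}(\nabla f(\theta_n)+\Delta M_{n+1})$,
\begin{equation*}
V_{n+1} \leq V_n - \gamma_{n+1}|\nabla f(\theta_n)|^2 - \gamma_{n+1}\langle \nabla f(\theta_n),\Delta M_{n+1}\rangle + \tfrac{\|D^2 f\|_\infty}{2}\gamma_{n+1}^2|\nabla f(\theta_n)+\Delta M_{n+1}|^2.
\end{equation*}
Taking conditional expectation given $\mathcal{F}_n$ kills the martingale term, and using strong convexity in the form $|\nabla f(\theta)|^2 \geq 2\alpha (f(\theta)-f(\theta^\star)) = 2\alpha V_n$ together with $\mathbf{(H^{SC}_{\Sigma_p})}$ for $p=1$ to bound $\mathbb{E}[|\Delta M_{n+1}|^2\mid\mathcal{F}_n]\leq \Sigma_1(1+V_n)$, I obtain a scalar recursion of the shape $\mathbb{E}[V_{n+1}\mid\mathcal{F}_n] \leq (1 - 2\alpha\gamma_{n+1} + C\gamma_{n+1}^2)V_n + C\gamma_{n+1}^2$, which after taking full expectations and a standard ODE-comparison argument (since $\sum\gamma_n=\infty$) yields $\mathbb{E} V_n \lesssim \gamma_n$, i.e. the case $p=1$ (up to constants; note $f(\theta^\star)=0$).

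For general $p$, I would expand $V_{n+1}^p$ rather than $V_{n+1}$. Raising the above inequality to the power $p$ and expanding, the leading deterministic drift is $(1-2\alpha\gamma_{n+1})^p V_n^p \approx (1-2p\alpha\gamma_{n+1})V_n^p$; the remaining terms are either of order $\gamma_{n+1}^2 V_n^{p}$ (absorbed by the negative drift), or martingale-type terms that vanish under $\mathbb{E}[\cdot\mid\mathcal{F}_n]$, or cross terms involving powers of $\Delta M_{n+1}$. The key point is that a term like $\gamma_{n+1}^2 V_n^{p-1}\mathbb{E}[|\Delta M_{n+1}|^2\mid\mathcal{F}_n] \lesssim \gamma_{n+1}^2 V_n^{p-1}(1+V_n)$ and, more generally, $\gamma_{n+1}^j V_n^{p-j/2}\mathbb{E}[|\Delta M_{n+1}|^j\mid\mathcal{F}_n]$ for $j\le 2p$, controlled by $\mathbf{(H^{SC}_{\Sigma_p})}$ through $\mathbb{E}[|\Delta M_{n+1}|^{2p}\mid\mathcal{F}_n]\le\Sigma_p(1+V_n^p)$ and interpolation/Young's inequality. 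Every such term is either $o(\gamma_{n+1})V_n^p$ or $O(\gamma_{n+1}^2)(1+V_n^{p-1})$; the latter is handled by an induction on $p$, using the already-established bound $\mathbb{E} V_n^{p-1}\lesssim \gamma_n^{p-1}$ so that $\gamma_{n+1}^2\mathbb{E} V_n^{p-1}\lesssim \gamma_{n+1}^{p+1} = o(\gamma_{n+1}^{p/2}\cdot\gamma_{n+1})$. This gives $\mathbb{E}[V_{n+1}^p\mid\mathcal{F}_n]\leq (1-2p\alpha\gamma_{n+1}+o(\gamma_{n+1}))V_n^p + r_n$ with $r_n\lesssim \gamma_{n+1}^{p+1}$, and then the same ODE-comparison lemma as in the $p=1$ case (now with target rate $\gamma_n^p$, noting $\gamma_{n+1}^{p+1}/\gamma_{n+1} = \gamma_{n+1}^p$ has the right order relative to the contraction) yields $\mathbb{E} V_n^p \lesssim \gamma_n^p$, hence $\mathbb{E}|\theta_n-\theta^\star|^{2p}\lesssim \gamma_n^p$, which is the $(L^{2p},\sqrt{\gamma_n})$-consistency; replacing $2p$ by $p$ (and invoking Jensen for odd exponents) gives the stated form.

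I would isolate two technical lemmas to keep the argument clean: (i) a deterministic comparison lemma stating that if $u_{n+1}\leq (1-a\gamma_{n+1})u_n + b\gamma_{n+1}^{1+s}$ with $\gamma_n=\gamma n^{-\beta}$, $\beta\in(0,1)$, $a>0$, then $u_n = O(\gamma_n^{s})$ — this is where the condition $\sum\gamma_n=+\infty$ is used and the boundedness of $D^2 f$ ensures the $\gamma_{n+1}^2$ terms are genuinely lower-order; (ii) the moment-interpolation estimate bounding $\mathbb{E}[|\Delta M_{n+1}|^j (f(\theta_n))^{p-j/2}\mid\mathcal{F}_n]$ for $1\le j\le 2p$ in terms of $1+(f(\theta_n))^p$ via Young's inequality and $\mathbf{(H^{SC}_{\Sigma_p})}$. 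The main obstacle, and the only genuinely delicate bookkeeping, is organizing the expansion of $V_{n+1}^p$ so that every cross term is correctly classified as either absorbable by the negative drift $-2p\alpha\gamma_{n+1}V_n^p$ or controllable by the induction hypothesis at order $p-1$; this is why the proof proceeds by induction on $p$ and why the step-size restriction $\beta\in(0,1)$ (more precisely $\gamma_n\to0$, so eventually $C\gamma_{n+1}^2 < p\alpha\gamma_{n+1}$) matters. Since the statement says the proof is left to the reader as a special case of Theorem~\ref{thm:sgd}, I would in practice only sketch these two lemmas and point to the general Kurdyka-Łojasiewicz argument for the full details.
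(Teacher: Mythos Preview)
Your approach is correct and matches the paper's intended proof, which explicitly defers to Theorem~\ref{thm:sgd} with $\phi\equiv 1$ (so that $V_p = f^p$ up to a constant). The only organizational difference is in how the lower-order term $\gamma_{n+1}^2 V_n^{p-1}$ is disposed of: the paper's Theorem~\ref{thm:sgd} Taylor-expands $V_p=f^p$ directly and, in Lemma~\ref{lem3WMR} (Step~4), applies Young's inequality \emph{pathwise} to get $\gamma^2 V_{p-1}\le \rho\gamma V_p + c_\rho \gamma^{p+1}$, thereby obtaining the recursion $\ES[V_p(\theta_{n+1})]\le (1-c_1\gamma_{n+1})\ES[V_p(\theta_n)]+c_2\gamma_{n+1}^{p+1}$ for each $p$ separately, with no induction needed. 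You instead expand $f$ first, raise to the $p$-th power, and handle $\gamma_{n+1}^2\ES V_n^{p-1}$ by induction on $p$. Both routes work; the paper's pathwise Young trick is slightly cleaner because it avoids the multinomial bookkeeping and delivers the conditional recursion in one shot, while your inductive version is more elementary but requires tracking more cross terms. Either way, the final comparison lemma (your item~(i), the paper's Theorem~\ref{thm:sgd}$(ii)$) is identical.
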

An immediate consequence of Proposition \ref{prop:sgdsc} and of Theorem \ref{thm:avgd}
on the sequence $(\htn)_{n \geq 1}$ is given by the next corollary.
\begin{cor}\label{cor:sca}
Assume that $\gamma_n=\gamma n^{-\beta}$ with $\beta \in (1/2,1)$. Then, if  $\mathbf{(H_S)}$  and  the assumptions of Proposition \ref{prop:sgdsc} hold,  we have:
 $$
\forall n \in \mathbb{N}^\star \qquad 
\mathbb{E} \left[|\htn-\theta^\star|^2 \right] \leq \frac{{\rm Tr}(\Sigma^\star)}{n}+C n^{-r_\beta}
$$
where $r_\beta$ is defined in Theorem \ref{thm:avgd}.
\end{cor}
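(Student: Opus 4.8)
The plan is to obtain Corollary~\ref{cor:sca} simply by chaining Proposition~\ref{prop:sgdsc} and Theorem~\ref{thm:avgd}: the former converts the structural hypotheses $\mathbf{H_{\sca}}$ and $\mathbf{(H^{SC}_{\Sigma_p})}$ into the $(L^{p},\sqrt{\gamma_n})$-consistency of the crude procedure, which is precisely the central input of the latter, and Theorem~\ref{thm:avgd} then delivers the announced mean-squared bound. Consequently, the only thing I would actually carry out is the verification that, under the hypotheses of the corollary, every assumption of Theorem~\ref{thm:avgd} is in force.

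First I would record that the Hessian condition comes for free: since $f$ is in $\sca$ for some $\alpha>0$, we have $D^2 f(\theta^\star)\ge\alpha I_d$, so $D^2 f(\theta^\star)$ is positive-definite, as required. Next I would invoke Proposition~\ref{prop:sgdsc} at the exponent $p=4$ — legitimate because "the assumptions of Proposition~\ref{prop:sgdsc}" are to be read as including $\mathbf{(H^{SC}_{\Sigma_4})}$ together with the Lipschitz-boundedness of $x\mapsto D^2 f(x)$ — obtaining a constant $C_4$ with $\E|\tn-\theta^\star|^4\le C_4\{\gamma_n\}^2$, i.e. the $(L^4,\sqrt{\gamma_n})$-consistency of the (centered) crude sequence in the sense of Definition~\ref{defi:Lp}. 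The remaining ingredients of Theorem~\ref{thm:avgd}, namely Assumption $\mathbf{(H_S)}$ and the step-size regime $\gamma_n=\gamma n^{-\beta}$ with $\beta\in(1/2,1)$, are assumed verbatim in the corollary; note moreover that $\mathbf{(H_S)}$ forces the conditional covariance of $\Delta M_{n+1}$ to be $S(\tn)$, so that the $S^\star$ appearing in $\Sigma^\star$ is indeed $S(\theta^\star)$, consistently with the phrasing of Theorem~\ref{thm:avgd}. Applying that theorem then gives $\E[|\htn-\theta^\star|^2]\le{\rm Tr}(\Sigma^\star)/n+Cn^{-r_\beta}$, which is the claim.

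There is essentially no obstacle here, the statement being a bookkeeping corollary; the only points deserving a word are (i) that the consistency condition of Definition~\ref{defi:Lp} is to be understood for the error sequence $\tn-\theta^\star$ rather than for $\tn$ itself, in accordance with the conclusion of Proposition~\ref{prop:sgdsc}, and (ii) that one must use Proposition~\ref{prop:sgdsc} specifically at $p=4$, the exponent demanded by Theorem~\ref{thm:avgd}, so that $\mathbf{(H^{SC}_{\Sigma_4})}$ is the moment assumption actually being invoked. All the genuine analytic content — the linearization of the drift of $(\htn)_{n\ge1}$ and the spectral analysis of the induced second-order chain on the one hand, the fourth-moment control of $(\tn)_{n\ge1}$ on the other — is carried by Theorem~\ref{thm:avgd} and Proposition~\ref{prop:sgdsc}, and is reused here as a black box.
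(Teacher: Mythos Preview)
Your proposal is correct and matches the paper's approach exactly: the paper presents Corollary~\ref{cor:sca} as ``an immediate consequence of Proposition~\ref{prop:sgdsc} and of Theorem~\ref{thm:avgd}'' without further argument, and your write-up simply spells out the verification that the hypotheses of Theorem~\ref{thm:avgd} are met once Proposition~\ref{prop:sgdsc} (with $p=4$) and strong convexity are in force.
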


\subsubsection{$(L^{p}$,$\sqrt{\gamma_n})$-consistency without strong convexity}\label{sec:main3}
 In some many interesting cases, the latter strongly convex Assumption $\mathbf{H_{\sca}}$  does not hold because the repelling effect towards $\theta^\star$ of $\nabla f(x)$ is not strong enough for large values of $|x|$. For example, this is the case in the logistic regression problem or in the recursive quantile estimation where the function $\nabla f$ is asymptotically flat for large values of $|x|$.
Motivated by these examples, we thus aim to  generalize the class of functions $f$ for which the $(L^{p}$,$\sqrt{\gamma_n})$-consistency property holds.
For this purpose, we introduce Assumption $(\mathbf{H_\phi})$ defined by: \smallskip

\noindent
\textbf{Assumption $(\mathbf{H_\phi})$ - }\textbf{Weakly reverting drift}
\textit{The function $f$ is $\mathcal{C}^2(\ER^d,\ER)$ with $D^2f$ bounded and Lipschitz with $D^2 f(\theta^\star)$ invertible and}
\begin{itemize} 
\item \textit{$i)$ $\phi$ is  ${\cal C}^2(\ER_+,\ER_+)$ non-decreasing and  $\exists \, x_0\ge0 \, : \forall x\ge x_0$, $\phi''(x)\le 0$. }

\item \textit{ $ii)$ Two positive numbers  $m$ and $M$ exist such that $\forall x\in\ER^d\backslash \{\theta^\star\}$:
\begin{equation}\label{eq:hphi}
0< m\le \phi'(f(x))|\nabla f(x)|^2+\frac{|\nabla f(x)|^2}{f(x)}\le M.
\end{equation}}
\end{itemize}
 
 Roughly speaking, the function $\phi$ quantifies the lack of convexity far from $\theta^\star$ and is calibrated in such a way that the function $x\rightarrow f^p(x) e^{\phi( f(x))}$ is strongly convex. The extremal situations are the following ones: when $\phi\equiv 1$,  we recover the previous case or more precisely, when $x\longmapsto D^2 f(x)$ is Lipschitz continuous, $(\mathbf{H_{\sca}})\Longrightarrow (\mathbf{H_\phi})$ with $\phi\equiv 1$. Actually, in this case, it is straightforward to prove that some positive constants $c_1$ and $c_2$ exist such that for all $x\in \ER^d$,
$$ c_{1} |x-\theta^\star|^2\le f(x)\le c_{2} |x-\theta^\star|^2,\quad \textnormal{and}\quad c_{1} |x-\theta^\star|\le |\nabla f(x)|\le c_{2} |x-\theta^\star|.$$
Note that $(\mathbf{H_\phi})$ remains slightly more general since it even can be true in some cases where $D^2 f$ is not strictly positive everywhere.\smallskip

The opposite case is $\phi(x)=x$. In this setting,  $(\mathbf{H_\phi})$
 is satisfied when $m\le  |\nabla f(x)|^2\le M$ with some positive $m$ and $M$. Note that this framework \tcr{includes} the online logistic regression and the recursive quantile estimation (see Subsection \ref{sec:main5}).\smallskip

For  practical purposes, we introduce below a kind of parametric version of Assumption $(\mathbf{H_\phi})$ denoted by $(\mathbf{H_{KL}^r})$, which may be seen as a 
global \textit{Kurdyka-\L ojasiewicz  gradient inequality} (see, \textit{e.g.}, \cite{Kurdyka,Lojasiewicz}  and Subsection \ref{sec:hphi} for details): 


\noindent
\textbf{Assumption $(\mathbf{H_{KL}^r})$ - }\textbf{Global KL inequality}
\textit{The function $f$ is $\mathcal{C}^2(\ER^d,\ER)$ with $D^2f$ bounded and Lipschitz with $D^2 f(\theta^\star)$ invertible and}
\begin{itemize} 
\item \textit{For $r \in [0,1/2]$, we have
\begin{equation}
\liminf_{|x| \longrightarrow + \infty} f^{-r} |\nabla f| >0 \qquad \text{and} \qquad \limsup_{|x| \longrightarrow + \infty} f^{-r} |\nabla f| >0
\end{equation}}
\end{itemize}
 $(\mathbf{H_\phi})$ and $(\mathbf{H_{KL}^r})$ are linked by the following lemma:
 \begin{prop}\label{lemma:KLimphi} Let $r\in[0,1/2]$ such that $(\mathbf{H_{KL}^r})$ holds. Then, $(\mathbf{H_\phi})$ holds with $\phi$ defined by $\phi(x)=(1+|x|^2)^{\frac{1-2r}{2}}$.
 Furthermore,
 \begin{equation}\label{eq:polynincrKL}
 \liminf_{|x|\rightarrow+\infty} f(x) |x|^{-\frac{1}{1-r}} >0.
\end{equation}
  \end{prop}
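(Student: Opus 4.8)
The plan is to prove Proposition~\ref{lemma:KLimphi} in two stages: first, verify that the candidate function $\phi(x)=(1+|x|^2)^{\frac{1-2r}{2}}$ indeed satisfies the structural requirements of $(\mathbf{H_\phi})$, and second, deduce the polynomial lower growth \eqref{eq:polynincrKL} from the gradient lower bound. Note that the regularity and boundedness of $f$, together with the invertibility of $D^2 f(\theta^\star)$, are common to both assumptions, so nothing needs to be checked there.

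For the first stage, I would begin by checking the elementary properties of $\phi$. Since $r\in[0,1/2]$, the exponent $\tfrac{1-2r}{2}\in[0,1/2]$ is nonnegative, so $\phi$ is nonnegative, of class $\mathcal{C}^2$ on $\ER_+$, and non-decreasing; a direct computation of $\phi''$ shows that $\phi''(x)\le 0$ for $x$ large enough (in fact for all $x\ge x_0$ for a suitable $x_0$, since the power is $\le 1/2<1$), which is condition $i)$. The heart of the matter is condition $ii)$: I must produce $0<m\le M<\infty$ with
$$
m\le \phi'(f(x))|\nabla f(x)|^2+\frac{|\nabla f(x)|^2}{f(x)}\le M \qquad \forall x\neq\theta^\star.
$$
The strategy is to split $\ER^d\setminus\{\theta^\star\}$ into a bounded region and its complement. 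On the complement of a large ball, $(\mathbf{H_{KL}^r})$ gives $0<c_1\le f^{-r}|\nabla f|\le c_2$ — here I would note that the \limsup\ statement in $(\mathbf{H_{KL}^r})$ as written should be read together with the boundedness of $D^2f$ (which forces $|\nabla f|$ to grow at most linearly, hence $f$ at most quadratically and $f^{-r}|\nabla f|$ bounded above) so that both a positive lower and a finite upper bound hold at infinity; then $\phi'(f)\sim f^{-2r}$ up to constants, so $\phi'(f)|\nabla f|^2 \asymp f^{-2r}\cdot f^{2r}=1$ and likewise $|\nabla f|^2/f\asymp f^{2r-1}$, which is bounded since $2r-1\le 0$ and $f\to\infty$; this handles large $|x|$. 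On a fixed large ball minus $\theta^\star$, I would use that near $\theta^\star$ the invertible Hessian gives $|\nabla f(x)|\asymp|x-\theta^\star|$ and $f(x)\asymp|x-\theta^\star|^2$ by Taylor expansion, so $|\nabla f|^2/f\asymp 1$ stays bounded away from $0$ and $\infty$, while $\phi'(f)|\nabla f|^2$ is a bounded nonnegative continuous term; away from both $\theta^\star$ and infinity, $|\nabla f|$ is bounded below by compactness and \eqref{condbase}. Assembling these cases gives the uniform two-sided bound.

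For the second stage, I would integrate the gradient lower bound coming from $(\mathbf{H_{KL}^r})$: there exist $c>0$ and $R>0$ with $|\nabla f(x)|\ge c\,f(x)^r$ for $|x|\ge R$. Along a ray from a point on the sphere $|x|=R$ out to radius $t$, one gets $\frac{d}{dt} f(x(t))$ controlled below in terms of $f(x(t))^r$ whenever the gradient points favorably; the cleaner route is the standard \L ojasiewicz trick: $\frac{d}{dt}\big(f^{1-r}\big)=(1-r)f^{-r}\nabla f\cdot \dot x$, and choosing the curve to be a gradient trajectory (or simply using that $|\nabla f^{1-r}|=(1-r)f^{-r}|\nabla f|\ge c(1-r)$ on $|x|\ge R$) shows $f^{1-r}$ grows at least linearly in $|x|$ outside a large ball, i.e. $f(x)^{1-r}\gtrsim |x|$ for $|x|$ large, which is exactly $\liminf_{|x|\to\infty} f(x)|x|^{-\frac{1}{1-r}}>0$. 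Concretely I would connect an arbitrary far point to the sphere $|x|=R$ by a path of length comparable to $|x|$ along which $|\nabla f^{1-r}|$ is bounded below, and bound the increment of $f^{1-r}$ from below by that length times the constant, minus the bounded value of $f^{1-r}$ on the sphere.

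The main obstacle I anticipate is the uniformity in condition $ii)$ across the transition zone: the two-sided bound must hold for \emph{every} $x\neq\theta^\star$, not just asymptotically, so the estimates near $\theta^\star$ (Taylor/Hessian-based), on intermediate annuli (compactness plus $\nabla f\neq 0$ away from $\theta^\star$), and at infinity (the KL inequality) must be patched together with care, and one must be attentive that the quantity $\phi'(f(x))|\nabla f(x)|^2$ alone does not have a positive lower bound (it vanishes at $\theta^\star$) — it is the \emph{sum} with $|\nabla f(x)|^2/f(x)$ that is bounded below, the latter term carrying the positivity near $\theta^\star$. A secondary subtlety is justifying the finite upper bound in $(\mathbf{H_{KL}^r})$ from the boundedness of $D^2 f$; this is where the hypothesis that $D^2f$ is bounded (built into $(\mathbf{H_{KL}^r})$) is essential and should be invoked explicitly.
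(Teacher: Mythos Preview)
Your outline matches the paper's own approach. For the first part, the paper likewise dismisses the implication as easy, invoking the local quadratic behaviour near $\theta^\star$ (from the invertible Hessian) together with the KL bounds at infinity; your three-region split and your remark that it is the \emph{sum} in $(\mathbf{H_\phi})\,ii)$ (not each term separately) that carries the positive lower bound are exactly the right observations. For the growth estimate \eqref{eq:polynincrKL}, the paper runs the negative gradient flow $\dot\chi_x=-\nabla f(\chi_x)$ from $x$ to $\theta^\star$, shows $\frac{d}{dt}\varphi(f(\chi_x(t)))\le -m\|\dot\chi_x(t)\|$ with $\varphi(a)=a^{1-r}/(1-r)$, and integrates to obtain $\varphi(f(x))\ge m\,L(\chi_x,0,\infty)\ge m|x|$ --- precisely the ``standard \L ojasiewicz trick'' via a gradient trajectory that you name.

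One caveat on your concrete implementation: a lower bound $|\nabla f^{1-r}|\ge c$ along a curve does \emph{not} by itself bound the increment of $f^{1-r}$ from below by $c$ times the arc length, because the directional derivative $\nabla f^{1-r}(\gamma(t))\cdot\dot\gamma(t)$ can be small or negative whenever $\dot\gamma$ is not aligned with $\nabla f$. The argument goes through only when the curve is (a reparametrization of) the gradient flow, which you do mention as an option; but your final sentence, phrased in terms of an unspecified ``path of length comparable to $|x|$'', would fail for, say, a straight radial segment. Stick to the gradient trajectory, use that it must cross the sphere $\{|y|=R\}$ and that its arc length up to that crossing is at least $|x|-R$, and the estimate follows as you intend.
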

The implication is easy to prove (using that near $\theta^\star$, $f(x)\lesssim |x-\theta^\star|^2$ and    $ |x-\theta^\star|\lesssim|\nabla f(x)|$ since  $\nabla f( \theta^\star)=0$ and $D^2 f(\theta^\star)$ is strictly positive). The proof of the more intricate property  \eqref{eq:polynincrKL} is postponed to Appendix \ref{sec:appendixAA}. Note that this property will be important to 
derive the $(L^{p}$,$\sqrt{\gamma_n})$-consistency (see Theorem \ref{theo:weaksgd}). As mentioned before, further comments on these assumptions are postponed to 
Subsection \ref{sec:hphi} and the rest of this paragraph is devoted to the main corresponding results.\smallskip



As in the strongly convex case, Assumptions $(\mathbf{H_\phi})$ and $(\mathbf{H_{KL}^r})$  certainly need to be combined with some assumption on the martingale increment. As 
one might expect, the condition is unfortunately (much) more stringent than in the strongly convex case: 


\noindent
\textbf{Assumption $\mathbf{(H^{\phi}_{\Sigma_p})}$ - }\textbf{ Moments of the martingale increment}
A locally bounded deterministic function  $\rho_p:\ER_+\mapsto \ER_+$ exists such that:
\begin{equation}\label{eq:noisephi}
\forall u \ge 0 \qquad \ES[|\Delta M_n|^{2p+2}e^{\phi(u |\Delta M_n|^2)}|{\cal F}_n]\le \rho_p(u) \qquad \text{a.s.}\end{equation}

\begin{rem} The general form of this assumption can be roughly explained as follows: one of the main ideas of the proof of Theorem \ref{theo:weaksgd} below is to use the function $x\mapsto f^p(x)^p e^{\phi(f(x))}$ as a Lyapunov-type function  in order to obtain some contraction properties.
Note that when  $(\Delta M_n)_{n\ge1}$ is a bounded sequence, $\mathbf{(H^{\phi}_{\Sigma_p})}$ is automatically satisfied (this is the case for the quantile recursive estimation and for the logistic regression of bounded variables: see Subsection \ref{sec:main5}).

However, when $\phi\equiv 1$ (\textit{i.e.} strongly convex case), it can be observed  that   $\mathbf{(H^{SC}_{\Sigma_p})}$ is not retrieved as it would have been expected. This can be explained 
by the fact that Assumption $\mathbf{(H^{\phi}_{\Sigma_p})}$ is adapted to the general case and that the particular case $\phi\equiv 1$, certainly leads to some simplifications (especially in the derivation of the Lyapunov function).
Nevertheless, we could (with additional technicalities) also allow a dependency  in $f(\tn)$ by replacing the right-hand member of the assumption with   $C(1+(f(\theta_n))^{p-1}$. However, this
 seems of limited interest  in the general case in view of  the exponential term of the left-hand side. More precisely, the dependency in $f(\theta_n)$ could be really
 interesting for applications if it were of comparable size to the left-hand member. \tcr{Finally, let us remark that as it can be expected, the constraint on the noise increases with $\phi$,
 $i.e.$, with the lack of convexity of the function $f$.}
\end{rem}

We then state the main result of this paragraph that holds in a generic \textcolor{black}{potentially} non-convex situation supported by  $\mathbf{(H_{\phi})}$.

\begin{thm}\label{theo:weaksgd}
For any $p\ge 1$:
\begin{itemize}
\item[$i)$] Assume that $f$ satisfies $\mathbf{(H_{\phi})}$  and that the martingale increment sequence satisfies $\mathbf{(H^{\phi}_{\Sigma_p})}$, then a constant $C_p$ exists such that:
$$
\mathbb{E} [ f^p(\tn) e^{\phi(f(\tn))}] \leq C_p \{\gamma_n\}^{p}.
$$
\item[$ii)$] If, furthermore, $\liminf_{|x|\rightarrow+\infty} |x|^{-2p} f^p(x)e^{\phi(f(x))}>0,$
then $(\tn)_{n \geq 1}$ is $(L^{2p}$,$\sqrt{\gamma_n})$-consistent: a constant $C_p$ exists such that:
$$
\mathbb{E} |\tn-\theta^\star|^{2p} \leq C_p \{\gamma_n\}^{p}.
$$
\item[$iii)$]
 In particular,  $(\tn)_{n \geq 1}$ is $(L^{2p}$,$\sqrt{\gamma_n})$-consistent if $\mathbf{(H_{KL}^r)}$ holds for a given $r \in [0,1/2]$ and $\mathbf{(H^{\phi}_{\Sigma_p})}$ holds with $\phi(t) = (1+t^2)^{(1-2r)/2}$.
\end{itemize}
\end{thm}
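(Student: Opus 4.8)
The plan is to run a Lyapunov analysis with the test function $V_p(x):=f^p(x)\,e^{\phi(f(x))}$, to establish a single one-step \emph{Robbins--Monro contraction inequality}
$$
\mathbb{E}\big[V_p(\theta_{n+1})\mid\mathcal{F}_n\big]\le (1-c\,\gamma_{n+1})\,V_p(\theta_n)+C\,\gamma_{n+1}^{p+1}\qquad (n\ge n_0),
$$
and then to deduce everything else. Point $i)$ will follow from this inequality and a standard comparison lemma for slowly varying steps; points $ii)$ and $iii)$ are comparatively soft consequences of $i)$. First I would collect two structural facts: $V_p\in\mathcal{C}^2(\ER^d)$ (away from $\theta^\star$ this is immediate from $f,\phi\in\mathcal{C}^2$; near $\theta^\star$ one uses that $D^2f(\theta^\star)$ is positive-definite, so $f(x)$ behaves like $|x-\theta^\star|^2$ and raising to the power $p\ge 1$ keeps the $\mathcal{C}^2$ regularity), and the coercivity estimate
$$
\nabla V_p(x)=\Big(p\,f^{p-1}(x)+f^p(x)\,\phi'(f(x))\Big)e^{\phi(f(x))}\nabla f(x),\qquad
m\,V_p(x)\le \langle\nabla V_p(x),\nabla f(x)\rangle\le pM\,V_p(x),
$$
where both bounds come from $p\ge1$, $\phi'\ge0$ and the two-sided inequality \eqref{eq:hphi}. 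I would also record that $\phi'$ is bounded on $\ER_+$ (non-increasing past $x_0$ since $\phi''\le0$, continuous on $[0,x_0]$) and that $|\nabla f|^2\le M f$ everywhere.

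\textbf{The one-step inequality (the heart of the proof).} Writing \eqref{eq:gs} as $\theta_{n+1}=\theta_n-\gamma_{n+1}\nabla f(\theta_n)+\gamma_{n+1}\Delta M_{n+1}$, a second-order Taylor expansion of $f$ together with $\|D^2f\|_\infty<\infty$ gives
$$
f(\theta_{n+1})\le A_n+B_{n+1}+D_{n+1},\qquad
A_n:=f(\theta_n)-\gamma_{n+1}|\nabla f(\theta_n)|^2,\ \
B_{n+1}:=\gamma_{n+1}\langle\nabla f(\theta_n),\Delta M_{n+1}\rangle,
$$
with $D_{n+1}:=\tfrac{C}{2}\gamma_{n+1}^2|\nabla f(\theta_n)-\Delta M_{n+1}|^2\ge0$ and, once $\gamma_{n+1}$ is small (i.e.\ $n\ge n_0$), $A_n\in[f(\theta_n)/2,\,f(\theta_n)]$ thanks to $|\nabla f|^2\le Mf$. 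Since $t\mapsto t^pe^{\phi(t)}$ is non-decreasing, $V_p(\theta_{n+1})\le (A_n+B_{n+1}+D_{n+1})^p\,e^{\phi(A_n+B_{n+1}+D_{n+1})}$. I would then linearize both factors: for the exponent, the (eventual) concavity of $\phi$ and boundedness of $\phi'$ give $\phi(A_n+B_{n+1}+D_{n+1})\le \phi(f(\theta_n))+\|\phi'\|_\infty\,(|B_{n+1}|+D_{n+1})$, hence $e^{\phi(\cdot)}\le e^{\phi(f(\theta_n))}\,e^{\|\phi'\|_\infty(|B_{n+1}|+D_{n+1})}$; for the power, expand $(A_n+B_{n+1}+D_{n+1})^p$ around $A_n^p\le f^p(\theta_n)$. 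Taking $\mathbb{E}[\,\cdot\mid\mathcal{F}_n]$, the terms linear in $\Delta M_{n+1}$ vanish, the coercivity estimate produces the contraction factor $1-m\gamma_{n+1}+o(\gamma_{n+1})$ in front of $V_p(\theta_n)$, and every leftover term is a product of a power of $\gamma_{n+1}$ (at least $\gamma_{n+1}^2$, coming from $D_{n+1}$ or from second-order terms of the two expansions) with a conditional moment of the form $\mathbb{E}\big[|\Delta M_{n+1}|^{k}e^{\phi(u|\Delta M_{n+1}|^2)}\mid\mathcal{F}_n\big]$ controlled by $\mathbf{(H^{\phi}_{\Sigma_p})}$. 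Those leftover terms that still carry a positive power of $f(\theta_n)$ are absorbed into $\varepsilon\gamma_{n+1}V_p(\theta_n)$ by Young's inequality (using the $-m\gamma_{n+1}V_p$ margin), and the remaining ones contribute $O(\gamma_{n+1}^{p+1})$. The finitely many indices $n<n_0$ are handled separately and folded into the final constant.

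\textbf{Conclusion of $i)$, and $ii)$--$iii)$.} Taking expectations, $u_n:=\mathbb{E}[V_p(\theta_n)]$ satisfies $u_{n+1}\le(1-c\gamma_{n+1})u_n+C\gamma_{n+1}^{p+1}$; since $\gamma_n=\gamma n^{-\beta}$ is slowly varying ($\gamma_n/\gamma_{n+1}=1+o(\gamma_n)$), the usual induction with the ansatz $u_n\le C_p\gamma_n^p$ closes, which is $i)$. For $ii)$: the hypothesis $\liminf_{|x|\to\infty}|x|^{-2p}V_p(x)>0$, combined with $V_p(x)\ge e^{\phi(0)}f^p(x)\gtrsim|x-\theta^\star|^{2p}$ near $\theta^\star$ (again because $D^2f(\theta^\star)$ is positive-definite) and positivity plus continuity of $V_p$ on compacta away from $\theta^\star$, yields $V_p(x)\ge c\,|x-\theta^\star|^{2p}$ on all of $\ER^d$, whence $\mathbb{E}|\theta_n-\theta^\star|^{2p}\le c^{-1}u_n\le c^{-1}C_p\gamma_n^p$. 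For $iii)$: Proposition~\ref{lemma:KLimphi} shows $\mathbf{(H_{KL}^r)}$ implies $\mathbf{(H_\phi)}$ with $\phi(t)=(1+t^2)^{(1-2r)/2}$ and, crucially, $f(x)\gtrsim|x|^{1/(1-r)}$ for large $|x|$; then $V_p(x)\ge f^p(x)e^{f(x)^{1-2r}}\gtrsim |x|^{p/(1-r)}\exp\!\big(c\,|x|^{(1-2r)/(1-r)}\big)$, which dominates $|x|^{2p}$, so the hypothesis of $ii)$ holds and $iii)$ follows.

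\textbf{Main obstacle.} The delicate step is the one-step inequality of the second paragraph: keeping the noise-generated error exactly at order $O(\gamma_{n+1}^{p+1})$ while the "large $f(\theta_n)$" regime is absorbed purely by the contraction. The eventual concavity of $\phi$ is precisely what allows the factor $e^{\phi(f(\theta_{n+1}))}$ to split off a clean $e^{\phi(f(\theta_n))}$ times a noise remainder whose conditional expectation is finite under $\mathbf{(H^{\phi}_{\Sigma_p})}$; verifying that after this split and the simultaneous expansion of the power $(A_n+B_{n+1}+D_{n+1})^p$ every surviving term either reconstitutes a multiple of $V_p(\theta_n)$ (absorbable) or is genuinely $O(\gamma_{n+1}^{p+1})$ is where essentially all the bookkeeping lies.
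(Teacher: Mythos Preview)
Your overall architecture --- run a Lyapunov argument with $V_p=f^pe^{\phi\circ f}$, obtain a one-step contraction $\mathbb{E}[V_p(\theta_{n+1})\mid\mathcal{F}_n]\le(1-c\gamma_{n+1})V_p(\theta_n)+C\gamma_{n+1}^{\,p+1}$, and then induct --- is exactly what the paper does, and your treatment of $ii)$ and $iii)$ coincides with theirs. The gap is in how you derive the one-step inequality for $i)$.

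You Taylor-expand $f$, write $f(\theta_{n+1})\le A_n+B_{n+1}+D_{n+1}$, and compose with $t\mapsto t^pe^{\phi(t)}$, linearizing the exponential via $\phi(A+B+D)\le\phi(f(\theta_n))+\|\phi'\|_\infty(|B|+D)$. Two things break. First, by jumping to $\phi(f(\theta_n))$ you discard the term $-\gamma_{n+1}\phi'(f(\theta_n))|\nabla f(\theta_n)|^2$; the only contraction left is $-p\gamma_{n+1}|\nabla f|^2/f$ from the power expansion, and under $(\mathbf{H_\phi})$ this ratio is \emph{not} bounded below (in the $r=0$ regime $f\sim|x|$, $|\nabla f|\sim 1$, so $|\nabla f|^2/f\to0$ and the entire lower bound $m$ in \eqref{eq:hphi} comes from $\phi'(f)|\nabla f|^2$). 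So the claimed factor $1-m\gamma_{n+1}$ does not emerge. Second, the multiplicative remainder $e^{\|\phi'\|_\infty(|B|+D)}$ depends on $\Delta M_{n+1}$, so the ``linear in $\Delta M_{n+1}$'' terms from $(A+B+D)^p$ are no longer martingale increments and do not vanish under $\mathbb{E}[\cdot\mid\mathcal{F}_n]$. Worse, this remainder contains $e^{c\gamma_{n+1}^2|\Delta M_{n+1}|^2}$ (from $D$) and $e^{c\gamma_{n+1}|\nabla f(\theta_n)|\,|\Delta M_{n+1}|}$ (from $|B|$, with $|\nabla f(\theta_n)|$ unbounded in general); neither is controlled by $(\mathbf{H^{\phi}_{\Sigma_p}})$, which only bounds $e^{\phi(u|\Delta M|^2)}$ --- and for $r\in(0,1/2)$, $\phi(t)\sim t^{1-2r}$ grows strictly slower than $t$.

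The paper fixes both issues at once by Taylor-expanding $V_p$ \emph{directly in $\theta$} rather than composing a one-dimensional expansion:
\[
V_p(\theta_{n+1})=V_p(\theta_n)-\gamma_{n+1}\langle\nabla V_p(\theta_n),\nabla f(\theta_n)\rangle+\gamma_{n+1}\langle\nabla V_p(\theta_n),\Delta M_{n+1}\rangle+\tfrac{\gamma_{n+1}^2}{2}D^2V_p(\xi)(-\nabla f(\theta_n)+\Delta M_{n+1})^{\otimes2}.
\]
Now your own coercivity estimate $\langle\nabla V_p,\nabla f\rangle\ge mV_p$ delivers the full contraction intact, and the linear noise term is a genuine martingale increment. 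The work shifts to controlling $D^2V_p(\xi)$ at the random intermediate point; for this the paper uses the \emph{sub-additivity} $\phi(x+y)\le\phi(x)+\phi(y)+c_\phi$ (a consequence of eventual concavity, Lemma~\ref{lem0WMR}) rather than a Lipschitz bound. Combined with $f(\xi)\le f(\theta_n)+C\gamma|\Delta M_{n+1}|^2$, this produces a noise factor $e^{\phi(C\gamma|\Delta M_{n+1}|^2)}$ --- exactly the quantity $(\mathbf{H^{\phi}_{\Sigma_p}})$ was designed to control --- instead of the uncontrolled $e^{c\gamma|\nabla f|\,|\Delta M|}$ or $e^{c\gamma^2|\Delta M|^2}$. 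The remaining bookkeeping (Lemmas~\ref{lem2WMR} and~\ref{lem3WMR}) then yields precisely the structure $\rho\gamma V_p+\gamma^2V_p+c_\rho\gamma^{p+1}$ you anticipated in your final paragraph.
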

}
\begin{proof}
The proof of Theorem \ref{theo:weaksgd} $i)$ is postponed to Section \ref{sec:proofsgd}. 

The second statement $ii)$ is a simple consequence of $i)$: actually, we only need to prove that the function $\tau$ defined by
$\tau(x)=f^p(x) e^{\phi(f(x)})$, $x\in\ER^d$,  satisfies $\inf_{x\in\ER^d\backslash\{0\}}\tau(x)|x-\theta^\star|^{-2p}>0$.
 Near $\theta^\star$,  the fact that $D^2f(\theta^\star)$ is  \tcr{positive-definite} (see Subsection \ref{sec:hphi} for comments on this property) can be used to ensure that $x\mapsto \tau(x)|x|^{-2p}$ is lower-bounded by a positive constant. Then, since $\tau$ is positive on $\ER^d$, the result follows from the additional assumption $\liminf_{|x|\rightarrow+\infty} \tau(x)|x|^{-2p}>0$. 
\smallskip

Finally,  for $iii)$, we only have to prove that the additional statement of $ii)$ holds  under $\mathbf{(H_{KL}^r)}$. This point is a straightforward consequence of \eqref{eq:polynincrKL}
and of the fact that $\phi(x)=(1+|x|^2)^{\frac{1-2r}{2}}$ in this case.
\end{proof}

Applying Theorem \ref{thm:avgd} makes it possible  to derive non-asymptotic bounds under $\mathbf{(H_{\phi})}$. We chose to only state the result under the parametric assumption $\mathbf{(H_{KL}^r)}$.

\begin{cor}\label{cor:ratehphi}
Assume $\mathbf{(H_S)}$, $\mathbf{(H_{KL}^r)}$ and   $\mathbf{(H^{\phi}_{\Sigma_p})}$ with $p=2$, $r \in [0,1/2]$ and {\small $\phi(t) = (1+t^2)^{\frac{1-2r}{2}}$}. If $\gamma_n=\gamma n^{-\beta}$ with $\beta\in(1/2,1)$, then $(\htn)_{n \geq 1}$ satisfies:
 $$
\forall n \in \mathbb{N}^\star \qquad 
\mathbb{E} \left[|\htn-\theta^\star|^2 \right] \leq \frac{{\rm Tr}(\Sigma^\star)}{n}+C n^{-r_\beta},
$$
where $r_\beta$ is defined in Theorem \ref{thm:avgd}.
\end{cor}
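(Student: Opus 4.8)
\textbf{Proof plan for Corollary \ref{cor:ratehphi}.}

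The plan is to simply chain together the two main results of the paper. Corollary \ref{cor:ratehphi} is an immediate consequence of Theorem \ref{theo:weaksgd} (which delivers the $(L^4,\sqrt{\gamma_n})$-consistency of the crude sequence $(\theta_n)_{n\ge1}$) combined with Theorem \ref{thm:avgd} (which converts that consistency into the optimal non-asymptotic bound for the averaged sequence). Concretely, I would proceed in three short steps.

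First, I would check that the hypotheses of Theorem \ref{theo:weaksgd}$(iii)$ are met: we have assumed $\mathbf{(H_{KL}^r)}$ for some $r\in[0,1/2]$, and we have assumed $\mathbf{(H^{\phi}_{\Sigma_p})}$ with $p=2$ and $\phi(t)=(1+t^2)^{(1-2r)/2}$, which is exactly the function appearing in that statement (and, by Proposition \ref{lemma:KLimphi}, this $\phi$ makes $\mathbf{(H_\phi)}$ hold as well). Hence Theorem \ref{theo:weaksgd}$(iii)$ applies with $p=2$ and yields that $(\theta_n)_{n\ge1}$ is $(L^{4},\sqrt{\gamma_n})$-consistent, i.e. a constant $C_2$ exists with $\mathbb{E}|\theta_n-\theta^\star|^{4}\le C_2\{\gamma_n\}^{2}$ for all $n$. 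Second, I would verify the remaining hypotheses of Theorem \ref{thm:avgd}: $\gamma_n=\gamma n^{-\beta}$ with $\beta\in(1/2,1)$ is assumed here; $\mathbf{(H_S)}$ is assumed here; and $D^2f(\theta^\star)$ is positive-definite, which is part of $\mathbf{(H_{KL}^r)}$ (it requires $D^2f(\theta^\star)$ invertible, and invertibility of the Hessian at a minimizer forces positive-definiteness). Third, I would invoke Theorem \ref{thm:avgd} with this verified $(L^4,\sqrt{\gamma_n})$-consistency to conclude that a large enough $C$ exists such that, for all $n\in\mathbb{N}^\star$,
$$
\mathbb{E}\left[|\htn-\theta^\star|^2\right]\le \frac{{\rm Tr}(\Sigma^\star)}{n}+C n^{-r_\beta},
$$
with $r_\beta=(\beta+\tfrac12)\wedge(2-\beta)$ as defined in Theorem \ref{thm:avgd}. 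This is exactly the claimed bound.

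There is essentially no obstacle here: the corollary is a bookkeeping consequence of the two theorems, and the only thing to be careful about is matching the parameter choices ($p=2$, the specific $\phi$) so that Theorem \ref{theo:weaksgd}$(iii)$ produces $(L^4,\sqrt{\gamma_n})$-consistency rather than consistency in a weaker $L^p$-sense — which is why the statement fixes $p=2$ rather than leaving $p$ free. All the analytical work has already been done in the proofs of Theorems \ref{thm:avgd} and \ref{theo:weaksgd}; this corollary only records their combination under the explicit KL-type hypotheses, for which $\mathbf{(H_\phi)}$ holds via Proposition \ref{lemma:KLimphi}.
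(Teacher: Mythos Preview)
Your proposal is correct and matches the paper's own treatment: the paper does not give a separate proof for this corollary but simply states that applying Theorem~\ref{thm:avgd} yields the result once Theorem~\ref{theo:weaksgd}$(iii)$ has delivered the $(L^4,\sqrt{\gamma_n})$-consistency under $\mathbf{(H_{KL}^r)}$. Your additional remark that invertibility of $D^2f(\theta^\star)$ at a minimizer forces positive-definiteness is the right way to close the loop on the hypotheses of Theorem~\ref{thm:avgd}.
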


  \begin{rem}
At   first sight, the result brought by Corollary \eqref{cor:ratehphi}  may appear surprising since we obtain a \tcb{$O(1/n)$} rate for the mean-squared error of the averaged sequence towards $\theta^\star$  \emph{without strong convexity}, including, for example, some situations where $f(x)\sim|x|$ as $|x|\rightarrow+\infty$.  This could be \tcr{viewed as} a contradiction with the minimax rate of convergence $O(1/\sqrt{n})$ for stochastic optimization problems in the simple convex case (see, \textit{e.g.}, \cite{NemirovskiYudin} or \cite{ABRW}). The above minimax result simply refers to the worst situation in the class of convex functions \emph{that are not necessarily differentiable}, whereas Assumption $\mathbf{(H_{\phi})}$ used in Corollary \ref{cor:ratehphi} describes a set of functions that are not necessarily strongly convex or even simply convex, but all the functions involved in $\mathbf{(H_{\phi})}$ or in $(\mathbf{H_{KL}^r})$  belong to $\mathcal{C}^2(\R^d,\R)$. In particular, the worst case  is attained in \cite{ABRW} through linear combinations of shifted piecewise affine functions $x \longmapsto |x+1/2|$ and $x \longmapsto |x-1/2|$, functions for which Assumption $\mathbf{(H_{\phi})}$ is obviously not satisfied.

\end{rem}
\subsection{Comments on Assumption $(\mathbf{H_\phi})$ and link with the Kurdyka-\L ojasiewicz inequality }\label{sec:hphi}

To the best of our knowledge, this assumption is not standard in the stochastic optimization literature and thus deserves several comments, included in this section.
For this purpose, for any symmetric real matrix $A$, let us denote  the lowest eigvenvalue of $A$ by $\underline{\lambda}_A$.

\paragraph{$f$ does not necessarily need to be convex}
It is important to notice  that the function $f$ itself is not necessarily assumed to be convex under Assumption $(\mathbf{H_\phi})$. The minimal requirement is that $f$ only possesses a unique critical point (minimum). Of course, our analysis will still be based on a descent lemma for the sequences $(\tn)_{n \geq 0}$. Nevertheless, we will use a Lyapunov analysis that will involve $f^{p} e^{\phi(f)}$ instead of $f$ itself for the sequence $(\tn)_{n \geq 0}$. The descent property will then be derived from Equation \eqref{eq:hphi} in $ii)$ of $(\mathbf{H_\phi})$. Thereafter, we will be able to exploit a spectral analysis of the dynamical system that governs $(\htn)_{n \geq 0}$. 
We stress the fact that, in general, the results without any convexity assumption on $f$ are usually limited to almost sure convergence  with the help of the Robbins-Siegmund Lemma (see, \textit{e.g.}, \cite{Duflo} and the references therein). As will be shown later on, Assumption $(\mathbf{H_\phi})$ will be sufficient to derive efficient convergence rates for the averaged sequence $(\htn)_{n \geq 0}$ without any \tcr{strong} convexity assumption.

\paragraph{$f$ is necessarily a sub-quadratic and $L$-smooth function}

Let us first remark that $(\mathbf{H_\phi})$ entails an a priori upper bound for $f$ that cannot increase faster than a quadratic form. We have:
\begin{eqnarray*}
\forall x \in \ER^d \qquad \frac{|\nabla f(x)|^2}{f(x)}\le M  & \Longrightarrow & \vert \nabla (\sqrt{f}) \vert \leq \frac{\sqrt{M}}{2} \\
& \Longrightarrow & f(x) \leq \frac{M}{4} \|x\|^2.
\end{eqnarray*}  
However, we also need a slightly stronger condition with $D^2 f$ bounded over $\R^d$, meaning that $f$ is $L$-smooth for a suitable value of $L$ (with an $L$-Lipschitz gradient). We refer to \cite{Nesterov2} for a general introduction to this class of functions. Even in  the deterministic setting, the $L$-smooth property is a common minimal requirement for obtaining  a good convergence rate for smooth optimization problems, since it makes it possible to produce a descent lemma result (see, \textit{e.g.}, \cite{Bertsekas}).

%
%
%

\paragraph{About the Kurdyka-\L ojasiewicz inequality} 
As mentionned before

It is important to note that $(\mathbf{H_\phi})$ should be related to   the Kurdyka-\L ojasiewicz gradient inequalities. In the deterministic setting, the \L ojasiewicz gradient inequality \cite{Lojasiewicz} with  exponent $r$ may be stated as follows:
\begin{equation}\label{eq:KL}
\exists m >0 \quad \exists \, r \in [0,1) \quad \forall x \in \ER^d \qquad f(x)^{-r} |\nabla f(x)| \geq m,
\end{equation}
while a generalization (see, \textit{e.g.}, \cite{Kurdyka}) is governed by the existence of a concave increasing ``desingularizing" function $\psi$ such that:
$$
|\nabla(\psi \circ f)|\geq 1.
$$
The \L ojasiewicz gradient inequality is then just a particular case of the previous inequality while choosing $\psi(t) = c t^{1-r}.$ We refer to \cite{Bolte} for a recent work on how to characterize some large families of functions $f$ such that a generalized KL-inequality holds. \vspace{1em}

In this paper, the  Kurdyka-\L ojasiewicz-type gradient inequality appears through Assumption $\mathbf{(H_{KL}^r)}$ with $r\in[0,1/2]$, which implies $\mathbf{(H_\phi)}$ (see Proposition \ref{lemma:KLimphi}).
 However,   it should be noted that Assumption $\mathbf{(H_{KL}^r)}$  is slightly different from \eqref{eq:KL} since we only enforce the function $f^{-r}|\nabla f|$ to be \emph{asymptotically}
lower-bounded by a positive constant. \\

Nevertheless, in our setting where $f$ has only one critical point and where $D^2f(\theta^\star)$ is positive-definite, it is easy to prove that 
$\mathbf{(H_{KL}^r)}$ implies \eqref{eq:KL}.  Indeed, around $\theta^\star$,  $D^2 f(\theta^\star)$ is positive definite so that we could choose $r=1/2$ and then satisfy the \L ojasiewicz gradient inequality \eqref{eq:KL} on the neighborhood of $\theta^\star$.  Hence, the link between $\mathbf{(H_{KL}^r)}$  and \eqref{eq:KL}  has to be understood for large values of $|x|$.\\

 Moreover, Proposition \ref{lemma:KLimphi} states that the classical \L ojasiewicz gradient inequality \eqref{eq:KL}   associated with the assumption of the \textbf{local} invertibility of $D^2 f(\theta^\star)$ implies Assumption $(\mathbf{H_\phi})$.
The choice $r=1/2$ in Equation \eqref{eq:KL} corresponds to   the strongly-convex case  with $\phi=1$ and $\psi(t)=\sqrt{t}$.
  Conversely, the \L ojasiewicz exponent $r=0$ corresponds to the weak repelling force $|\nabla f(x)|^2\propto 1$ as $|x|\rightarrow+\infty$ and $\phi(t)=\sqrt{1+t^2}$, leading to $\psi(t)=t$.\\

 Finally, we can observe that the interest of Assumption $(\mathbf{H_\phi})$ in the stochastic framework is   more closely related to the behavior of the algorithm  when $(\tn)_{n \geq 1}$ is far away from the target point $\theta^\star$, whereas in the deterministic framework, the main interest of the desingularizing function $\psi$ is used around $\theta^\star$ to derive fast linear rates even in non strongly convex situations. For example, \cite{Bolte2} established exponential convergence of the forward-backward splitting FISTA  to solve the Lasso problem with the help of KL inequalities although the minimization problem is not strongly convex and the core of the study is the understanding of the algorithm near $\theta^\star$.
  In simple terms, the difficulty to assert some good properties of stochastic algorithms is not exactly the same as the one for deterministic problems: it is  much more difficult to control the time for a stochastic algorithm to come back far away from $\theta^\star$ than for a deterministic method with a weakly reverting effect of $- \nabla f$ because of the noise on the algorithm. In contrast, the rate of a deterministic method crucially depends on the local behavior of $\nabla f$ around $\theta^\star$.
  
\paragraph{Counter-examples of the global KL inequality} Finally, we should have in mind what kind of functions do not satisfy the global   \L ojasiewicz gradient inequality given in Equation \eqref{eq:KL}. Since we assumed $f$ to have a unique minimizer $\theta^\star$ with $D^2 f(\theta^\star)$ invertible, Inequality $ f^{-r}  |\nabla f| \geq m >0$ should only fail asymptotically. {From Equation  \eqref{eq:polynincrKL} of Proposition \ref{lemma:KLimphi}, we know that $|x|\lesssim f(x)$ for large values of $|x|$.
As a consequence, any function $f$ with logarithmic growth or comparable to $|x|^r$ growth with $r\in(0,1)$  at infinity can not be managed by this assumption.} {In the deterministic case, it is nevertheless possible to handle some less increasing functions since deterministic gradient methods are essentially affected by the local behaviour of $f$ around $\theta^\star$ under a local KL-inequality (see, \textit{e.g.}, \cite{Bolte2}). In our stochastic setting, the effect of the noise when the algorithm is far from the target point $\theta^\star$ is not negligible in comparison with the local behaviour of the algorithm near $\theta^\star$.}


Another counter-example of $f$ occurs when $f$ exhibits an infinite sequence of oscillations in the values of $f' \geq 0$ with longer and longer areas near $f'=0$ when $|x|$ is increasing. We refer to \cite{Bolte2} for the following function that do not satisfy the KL inequality for any \tcb{$r \geq 2$}:
$$
f:  x \longrightarrow x^{2r} [2+\cos ( x^{-1})] \quad \text{if} \quad x \neq 0 \quad \text{and} \quad f(0)=0.
$$


\subsection{Applications}\label{sec:main5}

\paragraph{Strongly convex situation}
First, we  can observe that in the strongly convex situation, Corollary \ref{cor:sca} provides a very tractable criterion to assess the non-asymptotic first-order optimality of the averaging procedure since $\mathbf{(H^{SC}_{\Sigma_p})}$ is very easy to check.

For example, considering the \textbf{stochastic recursive least mean square estimation} problem (see, \textit{i.e.}, \cite{Duflo}), it can immediately be checked that $\theta \longrightarrow f(\theta)$ is quadratic. In that case, the problem is strongly convex, and the noise increment  satisfies:
$$
\ES[|\Delta M_n|^{2p}|{\cal F}_n]\leq \Sigma_p (1+(f(\tn))^{p} \qquad \text{a.s.},
$$
and Proposition \ref{prop:sgdsc} yields the $L^{p}$-$\{\sqrt{\gamma_n}\}$ consistency rate of $(\tn)_{n \geq 1}$. 
We stress the fact that the recent contribution of \cite{BachMoulines} proves a  non-asymptotic $O(1/n)$ rate of convergence for an averaging procedure that uses  a constant step-size stochastic gradient descent. Unfortunately, \cite{BachMoulines} do not obtain a sharp constant in front of the rate $n^{-1}$ regarding the Cramer-Rao lower bound. Hence, Corollary \ref{cor:sca} yields  a stronger (and optimal) result in that case.\\

\paragraph{Assumptions $(\mathbf{H_\phi})$ and $\mathbf{(H^{\phi}_{\Sigma_p})}$ hold for many stochastic minimization problems}

We end this section by pointing out that Assumption $(\mathbf{H_\phi})$ and $\mathbf{(H^{\phi}_{\Sigma_p})}$ capture many interesting situations where the $f$ is not strongly convex and may  even not be convex in some cases. \\

$\bullet$ Before providing explicit examples, a general argument relies on the statement of Theorem 2 of \cite{Bolte}: every coercive convex continuous function $f$, which is proper and semi-algebraic (see \cite{Bolte} for some precise definitions), satisfies the KL inequality. Note that such a result holds in non-smooth situations, as stated in \cite{Bolte3}, when using sub-differential instead of gradients, but our work does not deal with non smooth-functions $f$.\\

$\bullet$ 
The \textbf{on-line logistic regression} problem deals with the minimization of $f$ defined by:
\begin{equation}\label{eq:logitf}
f(\theta)  := \mathbb{E}\left[ \log \left(1+e^{-Y <X,\theta>}\right)\right]
\end{equation}
where $X$ is a $\ER^d$ random variable and $Y | X$ takes its value in $\{-1,1\}$ with:
\begin{equation}\label{eq:logity}
P[Y=1 \, \vert X=x] = \frac{1}{1+e^{-<\tcr{x},\theta^\star>}}.
\end{equation}
We then observe a sequence of i.i.d. replications $(X_i,Y_i)$ and the baseline stochastic gradient descent sequence $(\tn)_{n \geq 1}$ is defined by:
\begin{equation}\label{eq:defonline}
\tnp = \tn + \gamma_{n+1} \frac{Y_n X_n}{1+e^{ Y_n < \tn, X_n>}} = \tn - \gamma_{n+1} \nabla f(\tn) + \gamma_{n+1} \Delta M_{n+1}.
\end{equation}

In the recent contribution \cite{BachLogit}, the author derive\tcr{s} an adaptive and non-asymptotic $O(1/n)$ rate of convergence of the averaging procedure with the help of self-concordant functions (see \cite{Nesterov1} for a basic introduction). However, the result of \cite{BachLogit} does not lead to a tight constant regarding the Cramer-Rao lower bound since the result of \cite{BachLogit} is deteriorated exponentially fast with $R$,  which corresponds to the essential supremum of the design of $|X|$. We state  the following result below.

\begin{prop}
Assume that the law of the design $X$ is compactly supported in $B_{\R^d}(0,R)$ for a given $R>0$ and is elliptic:  for any $e \in \mathcal{S}^{d-1}(\R^d)$, $Var(<X,e>) \geq 0$. Assume that $Y$ satisfies the logistic Equation \eqref{eq:logity}. Then
\begin{itemize}
\item[$i)$] $f$ defined in Equation \eqref{eq:logitf} is convex with $D^2 f$ bounded and Lipschitz. Moreover $D^2 f(\theta^\star)$ is invertible and satisfies $\mathbf{(H_{KL}^r)}$ with $r=0$.
\item[$ii)$] Recall that $\Sigma^\star$ is defined in \eqref{def:Sigma2}, the averaged sequence $(\htn)_{n \geq 1}$ built from the sequence $(\tn)_{n \geq 1}$ introduced in \eqref{eq:defonline} satisfies:
$$
\exists \, C >0 \quad \forall n \geq 1 \qquad \mathbb{E} |\hat{\theta}_n - \theta^\star|^2 \leq \frac{Tr(\Sigma^\star)}{n} + C n^{-5/4}.
$$
\end{itemize}
\end{prop}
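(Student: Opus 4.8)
The plan is to verify that the on-line logistic regression problem falls within the scope of Corollary \ref{cor:ratehphi} with $r=0$, and then simply apply that corollary. For part $i)$, convexity of $f$ is classical: $\theta \mapsto \log(1+e^{-y\langle x,\theta\rangle})$ is convex in $\theta$ for each fixed $(x,y)$, so $f$, being an expectation of such functions, is convex; differentiating twice under the expectation (justified by compact support of $X$) gives $D^2 f(\theta) = \mathbb{E}\!\left[\sigma(\langle X,\theta\rangle)(1-\sigma(\langle X,\theta\rangle)) X X^T\right]$ where $\sigma$ is the logistic function, which is bounded by $\tfrac14 \mathbb{E}[XX^T]$ and hence bounded since $|X|\le R$; Lipschitz continuity of $D^2 f$ follows likewise from differentiating a third time and bounding by $R^3$. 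Invertibility of $D^2 f(\theta^\star)$ follows from the ellipticity assumption on the design: for any unit vector $e$, $e^T D^2f(\theta^\star) e = \mathbb{E}[\sigma(1-\sigma)\langle X,e\rangle^2] > 0$ because $\sigma(1-\sigma)$ is bounded below by a positive constant on the compact set $\{|\langle X,\theta^\star\rangle|\le R|\theta^\star|\}$ and $\mathrm{Var}(\langle X,e\rangle)>0$. For $\mathbf{(H_{KL}^0)}$, the key is the asymptotic behaviour of $|\nabla f|$: since $\nabla f(\theta) = -\mathbb{E}\!\left[\tfrac{YX}{1+e^{Y\langle X,\theta\rangle}}\right]$, one checks that along any ray $\theta = t e$ with $t\to\infty$, $|\nabla f(te)|$ converges to a strictly positive limit (the dominated convergence theorem applies, with the limiting integrand not identically zero thanks to ellipticity), while $f(te)$ grows linearly in $t$; this yields both $\liminf |x|^{-0}|\nabla f| = \liminf|\nabla f| > 0$ — actually one wants $\liminf f^{-r}|\nabla f|>0$ with $r=0$, i.e. $\liminf |\nabla f|>0$ — and $\limsup f^{-r}|\nabla f| < \infty$ which is immediate from $|\nabla f|\le \mathbb{E}|X|\le R$ (note that the statement of $\mathbf{(H_{KL}^r)}$ has $\limsup f^{-r}|\nabla f|>0$, trivially true).

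For part $ii)$, I would now invoke Corollary \ref{cor:ratehphi} with $p=2$, $r=0$, so that $\phi(t) = (1+t^2)^{1/2}$. The three hypotheses to check are $\mathbf{(H_S)}$, $\mathbf{(H_{KL}^r)}$ and $\mathbf{(H^{\phi}_{\Sigma_p})}$ with $p=2$. The second has just been established in part $i)$. For $\mathbf{(H_S)}$: the martingale increment from \eqref{eq:defonline} is $\Delta M_{n+1} = \tfrac{Y_n X_n}{1+e^{Y_n\langle\theta_n,X_n\rangle}} - \nabla f(\theta_n)$, whose conditional covariance $S(\theta_n) = \mathbb{E}\!\left[\big(\tfrac{YX}{1+e^{Y\langle\theta,X\rangle}}\big)^{\otimes 2}\right] - \nabla f(\theta)\nabla f(\theta)^T$ evaluated at $\theta=\theta_n$; since $X$ is compactly supported, $\theta\mapsto \tfrac{yx}{1+e^{y\langle\theta,x\rangle}}$ has gradient bounded by $\tfrac14 R^2$ uniformly, and the same for $\nabla f$, so $S$ is Lipschitz in the sense of $\mathbf{(H_S)}$. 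For $\mathbf{(H^{\phi}_{\Sigma_p})}$: this is where the compact support really pays off — the increment is \emph{bounded}, $|\Delta M_{n+1}| \le 2R$ almost surely (since $|\tfrac{YX}{1+e^{Y\langle\theta,X\rangle}}|\le |X|\le R$ and $|\nabla f|\le R$), so $\mathbb{E}\!\left[|\Delta M_n|^{2p+2} e^{\phi(u|\Delta M_n|^2)} \mid \mathcal{F}_n\right] \le (2R)^{2p+2} e^{\phi(4uR^2)} =: \rho_p(u)$, a locally bounded (indeed continuous) function of $u$, as observed in the Remark following $\mathbf{(H^{\phi}_{\Sigma_p})}$. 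With all hypotheses verified, Corollary \ref{cor:ratehphi} gives $\mathbb{E}|\htn-\theta^\star|^2 \le \mathrm{Tr}(\Sigma^\star)/n + C n^{-r_\beta}$, and choosing $\beta = 3/4$ (so that $r_\beta = 5/4$) yields the announced bound.

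I expect the only real subtlety to be the asymptotic lower bound $\liminf_{|x|\to\infty}|\nabla f(x)| > 0$ needed for $\mathbf{(H_{KL}^0)}$. One has to be careful that this holds along \emph{every} sequence $|x_k|\to\infty$, not just along rays; the argument is that $-\nabla f(\theta) = \mathbb{E}\!\left[\tfrac{YX}{1+e^{Y\langle\theta,X\rangle}}\right]$, and writing $Y$'s conditional law via \eqref{eq:logity} this becomes $\mathbb{E}\!\left[X\big(\tfrac{1}{1+e^{-\langle X,\theta^\star\rangle}}\cdot\tfrac{1}{1+e^{\langle X,\theta\rangle}} - \tfrac{1}{1+e^{-\langle X,\theta^\star\rangle}}\cdot\tfrac{1}{1+e^{-\langle X,\theta\rangle}}\cdot\text{(sign bookkeeping)}\big)\right]$; more cleanly, $-\nabla f(\theta) = \mathbb{E}[X(\sigma(\langle X,\theta^\star\rangle) - \sigma(\langle X,\theta\rangle))]$ by a direct computation, and projecting onto the unit direction $e = \theta/|\theta|$ one gets $-\langle\nabla f(\theta),e\rangle = \mathbb{E}[\langle X,e\rangle(\sigma(\langle X,\theta^\star\rangle)-\sigma(\langle X,|\theta|e\rangle))]$, whose liminf as $|\theta|\to\infty$ is controlled below by splitting the expectation over $\{\langle X,e\rangle > \delta\}$ and $\{\langle X,e\rangle < -\delta\}$ and using ellipticity to ensure both events have positive probability; on these events $\sigma(\langle X,|\theta|e\rangle) \to 0$ or $1$ monotonically, making the integrand eventually bounded away from $0$ with a definite sign. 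This is the one place requiring genuine (if routine) care; everything else is bookkeeping with compact support. I would relegate the detailed verification of part $i)$ to an appendix and keep the main text to the application of Corollary \ref{cor:ratehphi}.
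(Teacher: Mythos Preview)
Your proposal is correct and follows essentially the same route as the paper: verify smoothness, use ellipticity for invertibility of $D^2 f(\theta^\star)$, establish $\liminf_{|\theta|\to\infty}|\nabla f(\theta)|>0$ via dominated convergence along rays plus a uniformity argument, note that the martingale increments are bounded (so $\mathbf{(H^{\phi}_{\Sigma_p})}$ is automatic), and apply Corollary~\ref{cor:ratehphi}. You are in fact slightly more thorough than the paper in explicitly checking $\mathbf{(H_S)}$ and in flagging that the liminf must be uniform over directions rather than just along fixed rays; the paper dispatches the latter with a one-line ``compactness and continuity argument'' on the sphere, which is equivalent to what you outline.
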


\begin{proof}
We study $i)$.
Some straightforward computations yield $\forall \theta \in \R^d$:
$$
 \nabla f(\theta) = \mathbb{E} \left[ \frac{X \left[ e^{<X,\theta>}-e^{<X,\theta^\star>}\right]}{ \left[ 1+e^{<X,\theta>}\right]\left[ 1+e^{<X,\theta^\star>}\right]}\right] \quad \text{and} \quad D^{2} f(\theta)_{k,l} = \mathbb{E} \left[ \frac{X_k X_l e^{<X,\theta>}}{(1+e^{<X,\theta>})^2}\right]
$$
We can deduce that $\nabla f(\theta^\star)=0$ and that (see \cite{BachLogit} for example) 
$f$ is convex with
 $$<\theta-\theta^\star, \nabla f(\theta) >  = \mathbb{E} \left[ \frac{[<X,\theta>- <X,\theta^\star>] \left[ e^{<X,\theta>}-e^{<X,\theta^\star>} \right]}{\left[ 1+e^{<X,\theta^\star>}\right] \left[ 1+e^{<X,\theta>}\right]}\right] \geq 0,$$ because $(x-y)[e^x-e^y] > 0$ for every pair $(x,y)$ such that $x\neq y$.
It implies that $\theta^\star$ is the unique minimizer of $f$. Moreover, $D^2 f(\theta^\star) = \mathbb{E}\left[X X^T \frac{e^{<X,\theta^\star>}}{(1+e^{<X,\theta^\star>}}\right]$
 is invertible as soon as the design matrix is invertible. This property easily follows from the ellipticity condition on the distribution of the design:
$$
\forall e \in \mathcal{S}^{d-1}(\mathbb{R}^d) \qquad 
Var(<X,e>) = e^T \mathbb{E}[X X^T] e > 0,
$$
which proves that the Hessian $D^2f(\theta^\star)$ is invertible.

Regarding now the asymptotic norm of $|\nabla f(\theta)|$, the Lebesgue Theorem yields, $\forall e \in \mathcal{S}^{d-1}(\mathbb{R}^d) $:
{\small 
\begin{eqnarray*}
\lim_{t \longrightarrow + \infty} |\nabla f(t e)|  & = & \left| \mathbb{E}\left[ \frac{X \mathbf{1}_{< X, e> \geq 0} -  X e^{<X,\theta^\star>} \mathbf{1}_{< X, e> < 0}}{1+e^{<X,\theta^\star>}} \right] 
\right| \\
& = & \left| \left\langle \mathbb{E}\left[ \frac{X \mathbf{1}_{< X, e> \geq 0} -  X e^{<X,\theta^\star>} \mathbf{1}_{< X, e> < 0}}{1+e^{<X,\theta^\star>}} \right] ,e \right\rangle \right|\\
& \geq & \left|  \mathbb{E}\left[ \frac{<X,e> \mathbf{1}_{< X, e> \geq 0} -  <X,e> e^{<X,\theta^\star>} \mathbf{1}_{< X, e> < 0}}{1+e^{<X,\theta^\star>}} \right] \right|\\
&  \geq & \left|  \mathbb{E}\left[ \frac{<X,e> \mathbf{1}_{< X, e> \geq 0}}{1+e^{<X,\theta^\star>}} \right] \right|
\wedge \left|  \mathbb{E}\left[ \frac{<X,-e> e^{<X,\theta^\star>}\mathbf{1}_{< X,- e> \geq 0}}{1+e^{<X,\theta^\star>}} \right] \right|
\\
\end{eqnarray*}}
where we used the orthogonal decomposition on $e$ and $e^\perp$.
It then proves that for any $e \in  \mathcal{S}^{d-1}(\mathbb{R}^d)$, $\displaystyle\lim_{t \longrightarrow + \infty} |\nabla f(t e)| >0$ and a compactness and continuity argument leads to:
$$
\liminf_{|\theta|\longrightarrow + \infty} |\nabla f(\theta)| \geq \frac{ \inf_{e \in  \mathcal{S}^{d-1}(\mathbb{R}^d)} 
 \mathbb{E}\left[  <X,e>_+ \right]}{e^{R |\theta^\star|} (1+e^{R |\theta^\star|})}>0,
$$
since we assumed the design to be elliptic: $Var(<X,e>) >0$ for any unit vector $e$.
At the same time, it is also straightforward to check that:
$$\limsup_{|\theta|\longrightarrow + \infty} |\nabla f(\theta)| \leq + \infty,$$ which concludes the proof of $i)$.

We now prove $ii)$ and apply Corollary \ref{cor:ratehphi}. In that case, Assumption $\mathbf{(H_{KL}^r)}$  holds  with $r=0$.
Regarding Assumption $\mathbf{(H^{\phi}_{\Sigma_p})}$, we can observe that the martingale increments are \textit{bounded} (see \cite{BachLogit}, for example) and Inequality \eqref{eq:noisephi} is satisfied. Hence, Corollary \ref{cor:ratehphi} implies that $(\tn)_{n \geq 1}$ is a $L^{p}$-$\{\sqrt{\gamma_n}\}$ consistent sequence for any $p \geq 2$. We can therefore apply Theorem \ref{thm:avgd} for the averaging procedure $(\htn)_{n\geq 1}$, with $\Sigma^\star$ given in \eqref{def:Sigma2}. This ends the proof.
\end{proof}

$\bullet$  The \textbf{recursive quantile estimation} problem is a standard example that may be stated as follows (see, \textit{e.g.}, \cite{Duflo} for details). For a given cumulative distribution function $G$ defined over $\ER$, the problem is to find the quantile $q_{\alpha}$ such that
$G(q_{\alpha})=1-\alpha$. We assume that we observe a sequence of i.i.d. realizations $(X_i)_{i \geq 1}$ distributed with a cumulative distribution $G$. 
The recursive quantile algorithm is then defined by:

$$
\tnp= \tn - \gamma_{n+1} \left[ \mathbf{1}_{X_n \leq \tn} - (1-\alpha)\right] = 
\tn-\gamma_{n+1} [G(\tn)-(1-\alpha)] + \gamma_{n+1} \Delta M_{n+1},
$$
In that situation, the function $f\tcr{'}$ is defined by:
$$
f'(\theta)=\int_{q_{\alpha}}^{\theta} p(s) ds = G(\theta)-G(q_\alpha),
$$
where $p$ is the density with respect to the Lebesgue measure such that $G(q)=\displaystyle\int_{-\infty}^q p$. Assuming without loss of generality that $q_\alpha=0$ so that $f(0)=0$, the function $f$ is then defined by:
$$
f(\theta) := \int_{0}^\theta \int_{0}^u p(s) ds du,
$$
whose minimum is attained at $0$. It can be immediately be checked that $f''(0) \neq 0$ as soon as $p(q_{\alpha})>0$ and 
$f'(\theta) \longrightarrow 1-\alpha$ when $\theta \longrightarrow  + \infty$ while $f'(\theta) \longrightarrow - \alpha$  when $\theta \longrightarrow  - \infty$. Therefore, $f$ satisfies $(\mathbf{H_\phi})$ since $(\mathbf{H}_{KL}^r)$  and Equation \eqref{eq:KL} hold with $r=0$ and $\phi(t)=\sqrt{1+t^2}$. Again, regarding Assumption $\mathbf{(H^{\phi}_{\Sigma_p})}$, we can observe that the martingale increments are \textit{bounded} (see \cite{Cenac,Duflo}, for example). Therefore, Inequality \eqref{eq:noisephi} is obviously satisfied since $\phi$ is a monotone increasing function. We can apply Corollary \ref{cor:ratehphi} and conclude that the averaging sequence $(\htn)_{n \geq 1}$ satisfies the non-asymptotic optimal inequality: a constant $C>0$ exists such that:
$$
\forall n \geq 1 \qquad 
\mathbb{E} |\htn - q_{\alpha}|^2 \leq  \frac{\alpha (1-\alpha)}{ p(q_{\alpha}) \, n }  + C n^{-5/4}
$$

$\bullet$  The \textbf{on-line geometric median estimation}
We end this section with considerations on a problem close to the former one in larger dimensional spaces. The median estimation problem described in \cite{Cenac,Godichon} relies on the minimization of:
$$\forall \theta \in \R^d \qquad
f(\theta)  = \mathbb{E}[|X-\theta|],
$$
where $X$ is a random variable distributed over $\R^d$. Of course, our framework does not apply to this situation since 
$f$ is not $\mathcal{C}^2(\R^d,\R)$. Nevertheless, if we assume for the sake of simplicity that the support of $X$ is bounded (which is not assumed in the initial works of \cite{Cenac,Godichon}), then following the arguments of \cite{Kemperman}, the median is uniquely defined as soon as the distribution of $X$ is not concentrated on a single straight line, meaning that the variance of $X$ is elliptic in any direction of the sphere of $\R^d$. Moreover, it can be easily seen that:
$$
\lim_{|\theta| \longrightarrow + \infty} |\nabla f(\theta)| = 1,
$$
so that Equation \eqref{eq:KL}  holds with $r=0$. To apply Corollary \ref{cor:ratehphi}, it would be necessary to extend our work to this \textit{non-smooth} situation, which is beyond the scope of this paper, but that would be an interesting future subject of investigation.\\

\subsection{Organization of the paper}

The rest of the paper is dedicated to the proofs of the main results and the text is then organized as follows.
We first assume without loss of generality that $\theta^\star=0$ (and that $f(\theta^\star)=0$).
In Section \ref{sec:proofavgd}, we detail our spectral analysis of the behavior of $(\htn)_{n \geq 1}$ and  prove Theorem \ref{thm:avgd}. In particular, Proposition \ref{prop:mainproof} provides the main argument to derive the sharp exact first-order rate of convergence, and the results postponed below in  Section \ref{sec:proofavgd} only represent technical lemmas that are useful for the proof of Proposition \ref{prop:mainproof}.
Section \ref{sec:proofsgd} is dedicated to the proof of the $(L^{p}$,$\sqrt{\gamma_n})$-consistency under Assumption  
$(\mathbf{H_\phi})$ (proof of Theorem \ref{theo:weaksgd} $i)$). The generalization to the stronger situation of strong convexity (Proposition \ref{prop:sgdsc}) is left to the reader since it only requires slight modifications of the proof).

\section{Non asymptotic optimal averaging procedure (Theorem \ref{thm:avgd})}\label{sec:proofavgd}

The aim of this paragraph is to prove Theorem \ref{thm:avgd}. We will use a coupled relationship between $\htnp$ and $(\htn,\tnp)$. For this purpose, we introduce the notation for the drift at time $n$:
\begin{equation}\label{eq:drift}
\Lambda_n := \int_{0}^1 D^2 f(t \theta_n) dt\quad \textnormal{so that} \quad \Lambda_n\theta_n=\nabla f(\theta_n)
\end{equation}
 using the Taylor formula and the fact that $\theta^\star=\nabla f(\theta^\star)=0.$
The coupled evolution $(\tn,\htn) \rightarrow (\tnp,\htnp)$ is then described by the next proposition.
\begin{prop}
If we now introduce  $Z_n = (\tn,\htn)$, then we have the $2d$-dimensional recursion formula:
\begin{equation}\label{eq:evol_matriciel}
\Znp = \left( \begin{matrix}
I_d -\gamma_{n+1} \Lambda_n & 0 \\ \frac{1}{n+1} (I_d  -\gamma_{n+1} \Lambda_n) & (1-\frac{1}{n+1}) I_d
\end{matrix} \right) Z_n + \gamma_{n+1} \left( \begin{matrix} 
\Delta M_{n+1} \\ \frac{\Delta M_{n+1} }{n+1}
\end{matrix} \right).
\end{equation}
\end{prop}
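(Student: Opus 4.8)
The plan is to unfold the two defining recursions --- the Robbins--Monro update \eqref{eq:gs} for $(\tn)_{n\ge1}$ and the Ces\`aro averaging that defines $(\htn)_{n\ge1}$ --- and to linearize the gradient term around the critical point $\theta^\star=0$ by means of the integral form of Taylor's formula.

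First I would record the martingale decomposition of the update \eqref{eq:gs}. Setting $\dmnp := \nabla f(\tn) - \Lambda(\tn,\Znp)$, the independence of $\Znp$ from $\mathcal F_n$ together with the representation $\nabla f(\theta)=\E[\Lambda(\theta,Z)]$ gives $\E[\dmnp\mid\mathcal F_n]=0$, so that \eqref{eq:gs} reads
\[
\tnp=\tn-\gnp\,\nabla f(\tn)+\gnp\,\dmnp.
\]
Next, since $f\in\mathcal C^2(\R^d,\R)$ and $\nabla f(0)=\nabla f(\theta^\star)=0$, Taylor's formula with integral remainder yields
\[
\nabla f(\tn)=\nabla f(\tn)-\nabla f(0)=\Bigl(\int_0^1 D^2f(t\tn)\,dt\Bigr)\tn=\Lambda_n\tn,
\]
which is exactly the identity recorded in \eqref{eq:drift}. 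Substituting this into the previous display gives the first block of coordinates,
\[
\tnp=(I_d-\gnp\Lambda_n)\tn+\gnp\,\dmnp.
\]

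For the averaged coordinate I would use the elementary identity $\sum_{k=1}^{n}\theta_k=n\,\htn$, which gives
\[
\htnp=\frac1{n+1}\sum_{k=1}^{n+1}\theta_k=\frac1{n+1}\tnp+\Bigl(1-\frac1{n+1}\Bigr)\htn,
\]
and then insert the expression just obtained for $\tnp$:
\[
\htnp=\frac1{n+1}(I_d-\gnp\Lambda_n)\tn+\Bigl(1-\frac1{n+1}\Bigr)\htn+\frac{\gnp}{n+1}\,\dmnp.
\]
Stacking these two identities into the $2d$-dimensional vector $Z_n=(\tn,\htn)$ yields precisely the affine recursion \eqref{eq:evol_matriciel}.

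I do not expect any genuine obstacle in this proposition: it is a bookkeeping consequence of the definitions. The only point deserving a line of care is that the linearization $\nabla f(\tn)=\Lambda_n\tn$ is an \emph{exact} identity, not a first-order approximation, precisely because $\Lambda_n$ is the averaged Hessian along the whole segment $[0,\tn]$ and because $\theta^\star=0$ is a critical point of $f$. The real difficulty lies downstream: controlling the product over time of the random, $n$-dependent matrices appearing in \eqref{eq:evol_matriciel} and isolating the sharp leading term ${\rm Tr}(\Sigma^\star)/n$, which is where the $(L^4,\sqrt{\gamma_n})$-consistency of $(\tn)_{n\ge1}$ will enter, in order to absorb the error incurred when $\Lambda_n$ is replaced by $D^2f(\theta^\star)$.
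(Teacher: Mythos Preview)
Your proof is correct and follows essentially the same route as the paper: both combine the Robbins--Monro update with the Ces\`aro recursion $\htnp=\frac{1}{n+1}\tnp+(1-\frac{1}{n+1})\htn$ and invoke the exact identity $\nabla f(\tn)=\Lambda_n\tn$ from \eqref{eq:drift}. Your write-up is in fact slightly more explicit about the Taylor step and the martingale decomposition, but there is no substantive difference in method.
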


\begin{proof}
We begin with the simple remark:
$$
\forall n\in \mathbb{N} \qquad  \htnp =  \htn+\frac{1}{n+1} \left( \tnp-\htn\right).
$$
Now, Equation \eqref{eq:gs} yields:
$$
\forall n\in \mathbb{N} \qquad  
\left \{
\begin{array}{c @{=} l}
    \tnp  & \, \theta_n-\gamma_{n+1}  \nabla f (\tn) + \gamma_{n+1} \Delta M_{n+1} \\
    \htnp & \, \htn (1-\frac{1}{n+1})+\frac{1}{n+1} \left(  \theta_n-\gamma_{n+1}  \nabla f (\tn) + \gamma_{n+1} \Delta M_{n+1} \right).\\
\end{array}
\right.
$$

The result then follows from  \eqref{eq:drift}.
\end{proof}

The next proposition describes the linearization procedure by replacing $\Lambda_n$ with the fixed Hessian of $f$ at $\theta^\star$.

\begin{prop}
Set $\Lambda^\star = D^2 f(\theta^\star)$ and assume that $\Lambda^\star$ is a  \tcr{positive-definite} matrix. Then, a matrix $Q \in \mathcal{O}_d(\R)$ exists such that
$\cZn = \left( \begin{matrix} Q & 0 \\ 0 & Q \end{matrix} \right)Z_n
$ satisfies:
\begin{equation}\label{eq:recZn}
\cZnp = A_n  \cZn + \gamma_{n+1} \left( \begin{matrix} 
Q \Delta M_{n+1} \\ \frac{Q \Delta M_{n+1} }{n+1}
\end{matrix} \right) 
+ \underbrace{ \gamma_{n+1} \left( \begin{matrix} Q (\Lambda^{\star} - \Lambda_n) \tn \\
Q (\Lambda_n - \Lambda^{\star}) \frac{\tn}{n+1}
\end{matrix}
\right)}_{:=\check{\upsilon}_n},
\end{equation}
where $D^\star$ is the diagonal matrix associated with the eigenvalues of $\Lambda^\star$ and
\begin{equation}\label{eq:defAn}
A_n:=\left( \begin{matrix}
I_d -\gamma_{n+1} D^\star & 0 \\ \frac{1}{n+1} (I_d  -\gamma_{n+1} D^\star) & (1-\frac{1}{n+1}) I_d
\end{matrix} \right).
\end{equation}
\end{prop}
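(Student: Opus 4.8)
The plan is to diagonalise the frozen Hessian $\Lambda^\star=D^2f(\theta^\star)$ by a single orthogonal change of basis applied simultaneously to the two blocks $(\tn,\htn)$ of $Z_n$, and then to collect the error produced by replacing the genuine drift $\Lambda_n$ by $\Lambda^\star$.

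First, since $\Lambda^\star$ is the Hessian of a $\mathcal{C}^2$ function it is symmetric, and it is positive-definite by hypothesis; the spectral theorem therefore provides $Q\in\mathcal{O}_d(\R)$ and a diagonal matrix $D^\star$ whose diagonal entries are the (strictly positive) eigenvalues of $\Lambda^\star$, with $Q\Lambda^\star Q^T=D^\star$. I would set $\cZn:=\left( \begin{matrix} Q & 0 \\ 0 & Q \end{matrix} \right)Z_n$ and write $\mathbf{Q}:=\left( \begin{matrix} Q & 0 \\ 0 & Q \end{matrix} \right)$, which is itself orthogonal in $\mathcal{M}_{2d}(\R)$. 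Left-multiplying the recursion \eqref{eq:evol_matriciel} by $\mathbf{Q}$ and inserting $\mathbf{Q}^T\mathbf{Q}=I_{2d}$ in front of $Z_n$ gives
$$
\cZnp=\mathbf{Q}\, B_n\, \mathbf{Q}^T\,\cZn+\gamma_{n+1}\left( \begin{matrix} Q\Delta M_{n+1} \\ \frac{Q\Delta M_{n+1}}{n+1} \end{matrix} \right),
$$
where $B_n$ denotes the $2d\times 2d$ drift matrix appearing in \eqref{eq:evol_matriciel}; the noise term already has the claimed form.

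Second, I would split $B_n$ via $\Lambda_n=\Lambda^\star+(\Lambda_n-\Lambda^\star)$. The right block-column of $B_n$ is $(0,\ (1-\frac{1}{n+1})I_d)^T$ and does not involve $\Lambda_n$, so $B_n=B_n^\star-R_n$ where $B_n^\star$ is $B_n$ with $\Lambda_n$ replaced by $\Lambda^\star$ and $R_n$ is supported on its left block-column only, with blocks $\gamma_{n+1}(\Lambda_n-\Lambda^\star)$ and $\frac{\gamma_{n+1}}{n+1}(\Lambda_n-\Lambda^\star)$. Conjugating block by block and using that $Q$ commutes with the scalar sub-blocks $\frac{1}{n+1}I_d$ and $(1-\frac{1}{n+1})I_d$, while $Q(I_d-\gamma_{n+1}\Lambda^\star)Q^T=I_d-\gamma_{n+1}D^\star$, one obtains $\mathbf{Q}B_n^\star\mathbf{Q}^T=A_n$, exactly the matrix \eqref{eq:defAn}. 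For the remainder, $\mathbf{Q}R_n\mathbf{Q}^T\cZn=\mathbf{Q}R_nZ_n$, and since only the left block-column of $R_n$ is nonzero this expression involves only the first block $\tn$ of $Z_n$ — which is precisely why the raw iterate $\tn$ (rather than the first block of $\cZn$) appears inside $\check\upsilon_n$. Reading off the two blocks identifies this term with $\check\upsilon_n$ as in \eqref{eq:recZn}, and substituting the two identities into the rotated recursion yields the claim.

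There is no substantial obstacle here: the statement is essentially a change-of-variables bookkeeping step. The only points that need care are checking that the block rotation $\mathbf{Q}$ is orthogonal and commutes with the scalar sub-blocks, and isolating the residual so that it is expressed through the raw iterate $\tn$. This last point is what makes $\check\upsilon_n$ a genuine perturbation: combined with the Lipschitz regularity of $D^2f$ (so that $\|\Lambda_n-\Lambda^\star\|\lesssim|\tn|$) and the $(L^4,\sqrt{\gamma_n})$-consistency of $(\tn)_{n\ge1}$, it will be of the right negligible order in the subsequent $\mathbb{L}^2$ analysis.
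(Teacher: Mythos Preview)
Your proof is correct and follows essentially the same approach as the paper: split $\Lambda_n=\Lambda^\star+(\Lambda_n-\Lambda^\star)$, diagonalise $\Lambda^\star$ with an orthogonal $Q$ via the spectral theorem, and conjugate the block recursion \eqref{eq:evol_matriciel} by $\mathbf{Q}=\mathrm{diag}(Q,Q)$ to obtain $A_n$ and the residual $\check\upsilon_n$. Your bookkeeping is in fact slightly more explicit than the paper's (you track the block conjugation and the identity $\mathbf{Q}R_n\mathbf{Q}^T\cZn=\mathbf{Q}R_nZ_n$ that explains why the raw iterate $\tn$ appears in $\check\upsilon_n$), but the argument is identical in substance.
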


\begin{proof}

We write $\Lambda_n = \underbrace{D^2f(\theta^{\star})}_{:=\Lambda^\star} + (\Lambda_n -  D^2f(\theta^{\star}))$ and use the eigenvalue decomposition of $\Lambda^\star$. 
\begin{equation}\label{eq:updatezn}
\Znp = \left( \begin{matrix}
I_d -\gamma_{n+1} \Lambda^\star & 0 \\ \frac{1}{n+1} (I_d  -\gamma_{n+1} \Lambda^\star) & (1-\frac{1}{n+1}) I_d
\end{matrix} \right) Z_n + \gamma_{n+1} \left( \begin{matrix} 
\Delta M_{n+1} \\ \frac{\Delta M_{n+1} }{n+1}
\end{matrix} \right) + \upsilon_n,
\end{equation}
where the linearization term $\upsilon_n$ will be shown to be negligible and is defined by
\begin{equation*}
 \upsilon_n := 
\gamma_{n+1} \left( \begin{matrix} (\Lambda^{\star} - \Lambda_n) \tn \\
(\Lambda_n - \Lambda^{\star}) \frac{\tn}{n+1}
\end{matrix}
\right).
\end{equation*}
The matrix $\Lambda^\star$  is the Hessian of $f$  at $\theta^{\star}$  and is a symmetric positive matrix, which may be reduced into a diagonal matrix $D^\star =  Diag(\mu_1^{\star}, \ldots, \mu_d^{\star})$ with positive eigenvalues in an orthonormal basis:
\begin{equation}\label{eq:defQ}
\exists \,  Q \in \mathcal{O}_d(\R) \qquad \Lambda^\star = Q^T D^\star Q \qquad \text{with} \qquad Q^T = Q^{-1}.
\end{equation}
It is natural to introduce the new sequence adapted to the spectral decomposition of $\Lambda^{\star}$ given by Equation \eqref{eq:defQ}:
\begin{equation}\label{eq:defczn}
\cZn = \left( \begin{matrix} Q & 0 \\ 0 & Q \end{matrix} \right)Z_n =  \left(\begin{matrix} Q \tn \\ Q\htn \end{matrix} \right).
\end{equation}
Using $Q \Lambda^\star = D^\star Q$, we obtain the equality described in Equation \eqref{eq:recZn}.
\end{proof}
\medskip
 The important fact about the evolution of $(\cZn)_{n \geq 1})$ is the blockwise structure of $A_n$ as $d$ blocks of $2\times 2$ matrices:

{\small
   \begin{equation}\label{eq:defblock} A_n = \left( \begin{array}{c@{}c}
     \left[\begin{array}{cccc} 1-\gamma_{n+1}\mu_1^\star & 0 & \ldots & 0 \\
      0 & 1-\gamma_{n+1}\mu_2^\star & \ldots & \vdots \\
      \vdots & \ldots & \ddots & \vdots \\
      0 & \ldots & 0 & 1-\gamma_{n+1}\mu_d^\star
    \end{array}\right] & \mathbf{0_d} \\
\left[\begin{array}{cccc} \;\frac{1-\gamma_{n+1}\mu_1^\star}{n+1} \;& \;0\; & \;\ldots\; &\; 0 \;\\
      0 & \frac{1-\gamma_{n+1}\mu_2^\star}{n+1} & \ldots & \vdots \\
      \vdots & \ldots & \ddots & \vdots \\
      0 & \ldots & 0 & \frac{1-\gamma_{n+1}\mu_d^\star}{n+1}
    \end{array}\right]  & (1-\frac{1}{n+1}) {\bf I_d}
    \end{array} \right).\end{equation}
    }

In particular, we can observe that the matrices made of components  $(i,i)$ $(i, d+i)$, $(d+i,i)$ and $(d+i,d+i)$ have a similar form. In the next proposition, we focus on the related spectrum of such $2\times 2$-matrices (the proof is left to the reader).

\begin{prop}
For $\mu \in\ER$ and $n\ge1$, set
$
E_{\mu,n} := \left( \begin{matrix} 1-\gamma_{n+1} \mu & 0 \\ \frac{1-\mu \gamma_{n+1}}{n+1} & 1-\frac{1}{n+1}  \end{matrix} \right).$
$\bullet$ If $1-\mu \gamma_{n+1} (n+1)\neq0$, define $\epsilon_{\mu,n+1} $ by:
  \begin{equation}\label{eq:enmu}
\epsilon_{\mu,n+1} := \frac{1-\mu \gamma_{n+1}}{1-\mu \gamma_{n+1} (n+1)},
\end{equation}
The eigenvalues of $E_{\mu,n}$ are then given by
$$
Sp(E_{\mu,n}) = \left\{ 1 - \mu \gamma_{n+1}, 1-\frac{1}{n+1}\right\},
$$
whereas the associated eigenvectors are:
$$
u_{\mu,n} = \left(  \begin{matrix} 1 \\ \epsilon_{\mu,n+1} \end{matrix} \right) \quad \text{and}\quad  v=
\left(  \begin{matrix} 0 \\ 1 \end{matrix} \right).$$
$\bullet$ If $1-\mu \gamma_{n+1} (n+1) = 0$, $E_{\mu,n}$  is not diagonalizable in $\R$.
\end{prop}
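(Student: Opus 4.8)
The plan is to read off everything from the lower-triangular shape of $E_{\mu,n}$, so that the whole statement reduces to an elementary $2\times2$ computation. Since $E_{\mu,n}$ is lower triangular, its characteristic polynomial factors as $(\lambda-(1-\gamma_{n+1}\mu))(\lambda-(1-\tfrac{1}{n+1}))$, which immediately yields $Sp(E_{\mu,n})=\{1-\gamma_{n+1}\mu,\,1-\tfrac1{n+1}\}$. It then remains only to exhibit the eigenvectors in the generic case and to settle diagonalizability; for a $2\times2$ matrix the latter can fail only when the two eigenvalues collapse while $E_{\mu,n}$ is not a scalar matrix.

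For the eigenvectors I would first observe that $v=(0,1)^T$ satisfies $E_{\mu,n}v=(1-\tfrac1{n+1})v$, so $v$ is an eigenvector associated with $1-\tfrac1{n+1}$. For the remaining eigenvalue I would look for an eigenvector of the form $(1,\epsilon)^T$: the first coordinate of the equation $E_{\mu,n}(1,\epsilon)^T=(1-\gamma_{n+1}\mu)(1,\epsilon)^T$ holds automatically, and the second coordinate simplifies to $\epsilon\,\tfrac{1-\gamma_{n+1}\mu(n+1)}{n+1}=\tfrac{1-\gamma_{n+1}\mu}{n+1}$. When $1-\gamma_{n+1}\mu(n+1)\neq0$ this forces $\epsilon=\epsilon_{\mu,n+1}$ exactly as defined in \eqref{eq:enmu}; since $u_{\mu,n}=(1,\epsilon_{\mu,n+1})^T$ and $v$ are visibly linearly independent, $E_{\mu,n}$ is diagonalizable in this case.

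Finally, in the degenerate case $1-\gamma_{n+1}\mu(n+1)=0$ one has $\gamma_{n+1}\mu=\tfrac1{n+1}\in(0,1)$ for every $n\geq1$, hence $1-\gamma_{n+1}\mu=\tfrac{n}{n+1}=1-\tfrac1{n+1}$, so the two eigenvalues coincide at $\tfrac{n}{n+1}$. To conclude non-diagonalizability it suffices to check that $E_{\mu,n}$ is not scalar, which holds because its lower-left entry $\tfrac{1-\gamma_{n+1}\mu}{n+1}=\tfrac{n}{(n+1)^2}$ is nonzero; thus $E_{\mu,n}$ is a genuine $2\times2$ Jordan block and is not diagonalizable over $\R$ (nor over $\mathbb{C}$). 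There is no real obstacle in this proof; the only step that deserves a line of care is checking, in the degenerate case, that the off-diagonal entry does not vanish as well — which is precisely what excludes the trivial scalar situation.
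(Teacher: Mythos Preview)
Your proof is correct and is precisely the elementary $2\times2$ verification the paper has in mind when it states that ``the proof is left to the reader''. There is nothing to add: reading the eigenvalues off the diagonal of the lower-triangular matrix, solving for the eigenvector $(1,\epsilon)^T$, and checking that the off-diagonal entry $\tfrac{n}{(n+1)^2}\neq0$ in the degenerate case is exactly the intended argument.
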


At this stage, we point out that the eigenvectors are modified from one iteration to another in our spectral analysis of $(\htn)_{n \geq 1}$. Lemma \ref{lemma:diff_eps} (stated in Appendix \ref{appendix:A} will be useful to assert how much the eigenvectors are moving.

\begin{rem}
The spectral decomposition of $E_{\mu,n}$ will be important below.
\begin{itemize}
\item The first important remark is that $E_{\mu,n}$ \underline{is not symmetric}. The same remark holds for $A_n$ as well as shown in Equation \eqref{eq:defblock}. This generates a non-orthonormal change of basis to reduce $E_{\mu,n}$ and $A_n$ into a diagonal form, which implies some technical complications for the study of $(\cZn)_{n \geq 1}$. 
\item To a lesser extent, it is also interesting to point out that  this ``no self-adjointness" property of $A_n$
is a new example of acceleration of convergence rates with the help of non symmetric dynamical systems. This phenomenon also occurs  for the  kinetic  diffusion dynamics \cite{Villani,GadatMiclo}) and for the Nesterov accelerated gradient descent \cite{Nesterov1,CabotEnglerGadat1} even though we do not claim that such a clear common point exists between these methods.
\item The first eigenvalue of $E_{\mu,n}$  is $1-\mu \gamma_{n+1}$, and essentially acts on the component $\tn$ of the vector $Z_n$. We then expect a contraction of $\tn$ related to $\prod_{k=1}^n (1-\mu \gamma_{k+1})$ where $\mu$ is the associated eigenvalue of the Hessian of $f$ at $\theta^\star$. 
In a sense, there is nothing new for the standard stochastic gradient descent algorithm in this last observation.

\item 
Interestingly, the second eigenvalue of $E_{\mu,n}$ is $1-(n+1)^{-1}$, which is \underline{\emph{\textbf{independent}}} of the value of $\mu$. Moreover, this eigenvalue acts on the component brought by $\htn$ in the vector $Z_n$. This key observation will be at the core of the argument for a non-asymptotic study of the Ruppert-Polyak algorithm and an important fact to obtain the adaptivity property for the unknown value of $D^\star$.
In the following section, we  obtain some helpful properties on the averaging procedure due to a careful inspection of the evolution of the eigenvalues of $E_{\mu,n}$ from $n$ to $n+1$.
\end{itemize}
\end{rem}
\noindent
The reduction of $E_{\mu,n}$ may be written as:
$$
E_{\mu,n} = \left( \begin{matrix}
1 & 0 \\ \epsilon_{\mu,n+1} & 1
\end{matrix} \right) \left( \begin{matrix}
1-\mu \gamma_{n+1} & 0 \\ 0 & 1-\frac{1}{n+1}
\end{matrix} \right) \left( \begin{matrix}
1 & 0 \\ -\epsilon_{\mu,n+1} & 1
\end{matrix} \right).
$$
Therefore, if we define the diagonal matrix $\mathcal{E}_{n,D^{\star}}$ by:
\begin{equation}\label{eq:end}
\mathcal{E}_{n,D^{\star}} = Diag(\epsilon_{\mu_1^\star,n+1}, \ldots, \epsilon_{\mu_d^\star,n+1}),\end{equation}  we then deduce the spectral decomposition of $A_n$:
\begin{equation}\label{eq:decAn}
A_n = \left( \begin{matrix}
I_d & 0 \\
\mathcal{E}_{n,D^{\star}} &  I_d
\end{matrix}
\right) 
\left( \begin{matrix}
I_d -\gamma_{n+1} D^\star & 0\\
0 & (1-\frac{1}{n+1}) I_d\\
\end{matrix}
\right) \left( \begin{matrix}
I_d & 0 \\
-\mathcal{E}_{n,D^{\star}} &  I_d
\end{matrix}
\right).
\end{equation}
We introduce the last change of basis as:
\begin{equation}\label{eq:deftzn}
\tZn := \left( \begin{matrix}
I_d & 0 \\
-\mathcal{E}_{n,D^{\star}} &  I_d
\end{matrix}
\right) \cZn.
\end{equation}
We will establish the following proposition.
\begin{prop}\label{prop:mainproof}  Assume that $\Lambda^\star$ is a \tcr{positive-definite} matrix. If $(\tn)_{n \geq 1}$ is a $(L^{p}$,$\sqrt{\gamma_n})$-consistent sequence with $p\ge 4$ and if   $\mathbf{(H_{S})}$ holds
 then the sequence $(\tZn)_{n \geq 0}= (\tZn^{(1)},\tZn^{(2)})_{n \geq 0}$ satisfies:
\begin{itemize}
\item $i)$ Some constants $(c_p)_{p \geq 1}$ exists such that:
$$
\forall n \geq 1 \qquad 
 \E \left| \tZn^{(1)}\right|^{p} \lesssim c_p \{\gamma_n\}^\frac{p}{2}.
$$
\item $ii)$ A constant $c_2$ exists such that:
$$
\forall n \geq 1 \qquad 
 \E \left| \tZn^{(2)}\right|^2 \leq \frac{{\rm Tr}(\Sigma^\star)}{n}+\frac{c_2}{n^{r_\beta}},
 $$
 where $r_\beta=\left\{ (\beta+1/2) \wedge \left(2-\beta\right)\right\}>1$ as soon as $\beta\in (1/2,1)$.
\end{itemize}
\end{prop}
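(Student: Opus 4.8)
The goal is to track the three change-of-basis vectors $Z_n \to \cZn \to \tZn$ and exploit the block-diagonalized recursion. After the changes of basis, $\tZn = (\tZn^{(1)},\tZn^{(2)})$ evolves according to a recursion whose linear part is governed by $\mathrm{Diag}(I_d-\gamma_{n+1}D^\star)$ on the first block and by $(1-\tfrac{1}{n+1})I_d$ on the second block; but because $\mathcal{E}_{n,D^\star}$ depends on $n$, the change of basis \eqref{eq:deftzn} is itself $n$-dependent, so passing from $\tZnp$ written in terms of $\cZnp$ to $\tZnp$ written in terms of $\tZn$ will generate an extra ``drift of the eigenvectors" term controlled by $\mathcal{E}_{n+1,D^\star}-\mathcal{E}_{n,D^\star}$ (this is where Lemma \ref{lemma:diff_eps} enters). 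So the first step is to write down cleanly the recursion for $\tZn$: something of the form
\begin{equation*}
\tZnp = \left(\begin{matrix} I_d-\gamma_{n+1}D^\star & 0 \\ 0 & (1-\tfrac{1}{n+1})I_d\end{matrix}\right)\tZn + \gamma_{n+1}\,\widetilde{\Delta M}_{n+1} + \widetilde{\upsilon}_n + \widetilde{r}_n,
\end{equation*}
where $\widetilde{\Delta M}_{n+1}$ is the (still martingale) noise term, $\widetilde{\upsilon}_n$ collects the linearization remainder $\check\upsilon_n$ from \eqref{eq:recZn} after the basis change, and $\widetilde r_n$ is the eigenvector-drift term.

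\textbf{Step $i$: control of $\tZn^{(1)}$.} The first block is essentially a rescaled copy of the original Robbins-Monro iteration with the \emph{constant} Hessian $D^\star$: the contraction factor is $\prod(1-\gamma_{k+1}\mu_j^\star)$. I would prove the $L^p$ bound $\E|\tZn^{(1)}|^p \lesssim \gamma_n^{p/2}$ by a direct recursive (Lyapunov / induction-on-$n$) estimate: square (or raise to the $p/2$ power) the recursion, take conditional expectation to kill the martingale cross-term, use $\mathbf{(H_S)}$ to bound the noise contribution by $\gamma_{n+1}^p$ (after noting that the consistency of $(\tn)$ bounds $\E|S(\tn)|^{p/2}$), and use $(L^p,\sqrt{\gamma_n})$-consistency of $(\tn)_{n\ge1}$ to bound the linearization remainder $\check\upsilon_n$ — indeed $\|\Lambda_n-\Lambda^\star\|\lesssim|\tn|$ since $D^2f$ is Lipschitz, so $|\check\upsilon_n^{(1)}|\lesssim\gamma_{n+1}|\tn|^2$ and $\E|\check\upsilon_n^{(1)}|^p \lesssim \gamma_{n+1}^p\,\E|\tn|^{2p}$, which needs $(L^{2p},\sqrt\gamma)$-consistency — this is exactly why $p\ge4$ is required (we want $p=2$ at the end for $\tZn^{(1)}$, hence $2p=4$). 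One then closes a recursion of the form $a_{n+1}\le (1-c\gamma_{n+1})a_n + C\gamma_{n+1}^{1+p/2}$ (all error terms carry an extra $\gamma_{n+1}$), whose solution is $a_n\lesssim \gamma_n^{p/2}$ by a standard comparison lemma.

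\textbf{Step $ii$: control of $\tZn^{(2)}$.} This is the heart of the matter and the place where the sharp constant $\mathrm{Tr}(\Sigma^\star)$ must appear. Here the linear factor is $1-\tfrac{1}{n+1}$, so unrolling gives $\tZn^{(2)} = \frac1n\sum_{k\le n}(\text{noise}_k + \text{remainder}_k)$ roughly, i.e.\ the averaging structure is recovered exactly. The martingale part contributes $\frac1{n^2}\sum_{k\le n}\gamma_k^2\cdot(\text{stuff})$ — wait, one must be careful: after the change of basis the noise in the second block is $\gamma_{k+1}\big(\tfrac{Q\Delta M_{k+1}}{k+1} + \mathcal{E}_{k,D^\star}Q\Delta M_{k+1}\big)$, and since $\epsilon_{\mu,k+1}\sim \tfrac1{\mu\gamma_{k+1}(k+1)}$ (from \eqref{eq:enmu}, as $\mu\gamma_{k+1}(k+1)\to\infty$), the dominant noise term in $\tZn^{(2)}$ is $\sim \tfrac1{k+1}(D^\star)^{-1}Q\Delta M_{k+1}$, so after telescoping $\tZn^{(2)}\approx \tfrac1n\sum_{k\le n}(D^\star)^{-1}Q\Delta M_{k+1}$, whose squared $L^2$-norm is $\tfrac1{n^2}\sum_{k\le n}\E\big[(D^\star)^{-1}QS(\tn)Q^T(D^\star)^{-1}\big]\approx \tfrac1n\mathrm{Tr}\big((D^\star)^{-1}QS^\star Q^T(D^\star)^{-1}\big) = \tfrac1n\mathrm{Tr}(\Sigma^\star)$, using $\mathbf{(H_S)}$, the Lipschitz continuity of $S$, and consistency of $(\tn)$ to replace $S(\tn)$ by $S^\star$ up to an $O(\sqrt{\gamma_n})$ error. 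The remaining terms — the gap between $\epsilon_{\mu,k+1}$ and its leading order, the linearization remainder $\check\upsilon_k$, the eigenvector-drift $\widetilde r_k$, the coupling with $\tZn^{(1)}$, and the martingale-variance replacement error — must all be shown to be $O(n^{-r_\beta})$ with $r_\beta = (\beta+\tfrac12)\wedge(2-\beta)$. This bookkeeping is the main obstacle: each error source contributes a different power of $n$, and one has to check that the worst of them is exactly $n^{-r_\beta}$. Heuristically, $\check\upsilon_k$ contributes $\sim \gamma_k|\tn|^2\sim\gamma_k^2\sim k^{-2\beta}$, which after the $\tfrac1n\sum$-averaging with weights gives $n^{-(2\beta)}\cdot$(something) and cross-terms with the noise give $n^{-(\beta+1/2)}$-type terms; the eigenvector drift and the $\tfrac1{n+1}$-vs-$\epsilon$ discrepancy give $n^{-(2-\beta)}$-type terms. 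Optimizing $(\beta+\tfrac12)\wedge(2-\beta)$ over $\beta$ gives the crossover at $\beta=3/4$, value $5/4$.

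\textbf{Assembling.} Finally, one undoes the changes of basis: since $Q\in\mathcal{O}_d(\R)$ is orthonormal, $|Q\htn| = |\htn|$, and one checks that $\htn - \theta^\star$ (recall $\theta^\star=0$) is recovered from $\tZn^{(2)}$ up to the $\mathcal{E}_{n,D^\star}$-correction involving $\tZn^{(1)}$, namely $Q\htn = \tZn^{(2)} + \mathcal{E}_{n,D^\star}\tZn^{(1)}$; the correction has $L^2$-norm $\lesssim \|\mathcal{E}_{n,D^\star}\|\,\|\tZn^{(1)}\|_{L^2}\lesssim \tfrac1{n\gamma_n}\cdot\sqrt{\gamma_n}\,\cdot(\text{const}) \lesssim n^{-1}\gamma_n^{-1/2} = n^{-1+\beta/2}$, which for $\beta<1$ is $o(n^{-1/2})$, hence its square is $o(n^{-1})$, but more precisely $n^{-2+\beta} \le n^{-r_\beta}$ since $2-\beta\ge r_\beta$; the cross term between $\tZn^{(2)}$ and the correction is, by Cauchy–Schwarz, $\lesssim n^{-1/2}\cdot n^{-1+\beta/2} = n^{-3/2+\beta/2}\le n^{-r_\beta}$ as well. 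Collecting $i)$ and $ii)$ then yields $\E|\htn-\theta^\star|^2 = \E|\tZn^{(2)}|^2 + O(n^{-r_\beta}) \le \tfrac{\mathrm{Tr}(\Sigma^\star)}{n} + Cn^{-r_\beta}$, which is exactly Proposition \ref{prop:mainproof} and hence Theorem \ref{thm:avgd}.
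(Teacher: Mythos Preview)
Your plan has the right architecture, but two points deserve correction.

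\textbf{Part $i)$.} You overcomplicate this. From \eqref{eq:defczn} and \eqref{eq:deftzn} one has exactly $\tZn^{(1)}=Q\theta_n$, and since $Q\in\mathcal{O}_d(\R)$, $|\tZn^{(1)}|=|\theta_n|$. The $(L^p,\sqrt{\gamma_n})$-consistency of $(\tZn^{(1)})$ is therefore \emph{identical} to that of $(\theta_n)$, with nothing to prove. Your proposed Lyapunov recursion on the first block is not only unnecessary, it would in fact require $(L^{2p},\sqrt{\gamma_n})$-consistency of $(\theta_n)$ to control the linearization error $\gamma_{n+1}|\theta_n|^2$ in the $p$-th moment --- more than is assumed.

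\textbf{Part $ii)$.} Here the paper takes a genuinely different route from your unrolling. It works directly with the one-step recursion for $\ES[|\tZnp^{(2)}|^2]$ (Lemma~\ref{lem:recursions}$\,ii)$), in which the coupling between the two blocks appears through the covariance $\omega_n(i)=\ES[(\tZn^{(1)})_i(\tZn^{(2)})_i]$. The key device is a \emph{bootstrap}: first a crude $O(n^{-1})$ bound on $\ES[|\tZn^{(2)}|^2]$ via Young's inequality, then a separate recursion for $\omega_n(i)$ (Lemma~\ref{lem:recursions}$\,i)$, solved by Lemma~\ref{lem:contechnique2}) giving $\omega_n(i)=O(n^{-1})$, and finally this refined covariance bound is fed back into the recursion for $\ES[|\tZn^{(2)}|^2]$ to obtain the sharp $n^{-r_\beta}$.

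Your unrolling approach is not wrong in principle, but as sketched it has a gap precisely at this coupling. After unrolling, the cross term between the martingale sum $\frac{1}{n}\sum_k (k{+}1)\gamma_{k+1}\Upsilon_k Q\Delta M_{k+1}$ and the eigenvector-drift sum $\frac{1}{n}\sum_k (k{+}1)\Omega_k Q\theta_k$ does \emph{not} vanish (the terms with $k>j$ survive), and Cauchy--Schwarz gives only
\[
\|N_n\|_{L^2}\,\|R_n^{(1)}\|_{L^2}\;\lesssim\; n^{-1/2}\cdot n^{\beta/2-1}\;=\;n^{-3/2+\beta/2},
\]
which is strictly worse than $n^{-r_\beta}$ for every $\beta>2/3$ (e.g.\ at $\beta=3/4$ one gets $n^{-9/8}$ instead of $n^{-5/4}$). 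To match $r_\beta$ you would need a sharper bound on this cross term, and that is exactly what the paper's covariance recursion for $\omega_n$ provides. So either you must track $\omega_n$ anyway, or find another mechanism to beat Cauchy--Schwarz on the cross term; your sketch does neither.

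Your identification of the leading noise contribution $\gamma_{k+1}\Upsilon_k\sim -(k{+}1)^{-1}(D^\star)^{-1}$ and the resulting $\mathrm{Tr}(\Sigma^\star)/n$ is correct, as is the final change-of-basis argument.
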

Since we aim to obtain the highest possible value for the second order term $r_\beta$, we are driven to the ``optimal" choice $\beta=3/4$, which in turns implies that
$$
\forall n \in \mathbb{N}^\star \qquad \mathbb{E} |\tZn|_2^2 \leq \frac{{\rm Tr}( \Sigma^\star)}{n} + C n^{-5/4}.
$$

\begin{proof} 

\underline{Proof of $i)$:}
We first observe that the sequence in $\R^d \times \R^d$ may be written as $\tZn=(\tZn^{(1)},\tZn^{(2)})$ and Equations \eqref{eq:defczn} and \eqref{eq:deftzn} prove that $\tZn^{(1)} = Q\tn$.  Then, the  $(L^{p},\sqrt{\gamma_n})$-consistency of $(\tZn^{(1)})_{n\ge1}$ is a direct consequence of the one of $(\tn)_{n \geq 1}$.\smallskip


\noindent \underline{Proof of $ii)$:} We pick $n_0$ such that $\forall n \geq n_0: \epsilon_{\mu,n} < 0$ for any $\mu \in Sp(\Lambda^\star)$.

\textbf{Step 1: Recursion  formula}

We  first establish a recursion between $\tZn$ and $\tZnp$ that will be used in Lemma \ref{lem:recursions}. It will provide a key relationship
on the covariance between $\tZn^{(1)}$ and $\tZn^{(2)}$ and on the variance of $\tZn^{(2)}$.\smallskip

Definitions \eqref{eq:defczn}, \eqref{eq:deftzn}, the recursive link \eqref{eq:updatezn}  and the definition of $\check{\upsilon}_n$ given in Equation \eqref{eq:recZn} yield:
{\small
\begin{align*}
\tZnp  &=  \left( \begin{matrix}
I_d & 0 \\
-\mathcal{E}_{n+1,D^{\star}} &  I_d
\end{matrix}
\right) \cZnp 
\nonumber\\
&=  \left( \begin{matrix}
I_d & 0 \\
-\mathcal{E}_{n+1,D^{\star}} &  I_d
\end{matrix}
\right) \left( A_n \cZn +  \gamma_{n+1} \left( \begin{matrix} 
Q \Delta M_{n+1} \\ \frac{Q \Delta M_{n+1} }{n+1}
\end{matrix} \right) + \check{\upsilon}_n \right)
\nonumber \\
 &=  
 \left( \begin{matrix}
I_d & 0 \\
-\mathcal{E}_{n+1,D^{\star}} &  I_d
\end{matrix}
\right)  \left( \begin{matrix}
I_d & 0 \\
\mathcal{E}_{n,D^{\star}} &  I_d
\end{matrix}
\right) \left( \begin{matrix}
I_d -\gamma_{n+1} D^\star & 0\\
0 & (1-\frac{1}{n+1}) I_d\\
\end{matrix}
\right) \tZn
\nonumber\\
&+   
\gamma_{n+1} \left[\left(\begin{matrix}
Q \Delta M_{n+1} \\ (-\mathcal{E}_{n+1,D^{\star}} + \frac{I_d}{n+1}) Q \Delta M_{n+1}
\end{matrix} \right)  + \left(
\begin{matrix} Q(\Lambda^\star-\Lambda_n) \tn\\
(\mathcal{E}_{n+1,D^{\star}} - \frac{I_d}{n+1})
Q (\Lambda^\star-\Lambda_n) \tn
\end{matrix} \right)\right],
\end{align*}}
where in the third line we used the spectral decomposition of $A_n$ given by \eqref{eq:decAn}.
Since $D^2f$ is Lipschitz continuous, $\|\Lambda^\star-\Lambda_n\|=O(|\theta_n|)$. Then, we deduce that: 
{\small
\begin{align}\label{tznpeq}
\begin{cases}
\tZnp^{(1)}=(I_d -\gamma_{n+1} D^\star) \tZn^{(1)}+\gamma_{n+1} \left(Q\Delta M_{n+1}+O\left(|\theta_n|^2\right)\right)\\
\tZnp^{(2)}=(1-\frac{1}{n+1})\tZn^{(2)}+\Omega_n \tZn^{(1)}
+ \gamma_{n+1} \Upsilon_n\left(  Q\Delta M_{n+1}+O\left( |\theta_n|^2\right)\right),
\end{cases}
\end{align}}
with $$\Omega_n=(\mathcal{E}_{n,D^{\star}}-\mathcal{E}_{n+1,D^{\star}}) (I_d-\gamma_{n+1} D^{\star}) \quad \text{and}\quad\Upsilon_n= \mathcal{E}_{n+1,D^{\star}} - \frac{I_d}{n+1}.$$
%

\textbf{Step 2: $\mathbf{\ES[|\tilde{Z}_n^{(2)}|^2]=O(n^{-1})}$}
The study of $\ES[|\theta_n|^2 \tilde{Z}_n^{(2)}]$ is rather intricated as pointed in Lemma \ref{lem:recursions}. We introduce the covariance:

\begin{equation}\label{eq:correlationi}
\forall i \in \{1,\ldots,d\} \qquad \omega_n(i)=\ES[(\tZn)_i(\tZn)_{d+i}]=\ES[(\tZn^{(1)})_i(\tZn^{(2)})_i],
\end{equation}
and the useful coefficient:
\begin{equation}\label{eq:alphani}
\forall i \in \{1,\ldots,d\} \qquad \alpha_n^i= 2 \left( 1-\frac{1}{n+1}\right) \{\Omega_n\}_{i,i}.
\end{equation}

We can use the Young inequality  $ab\le \frac{\epsilon}{2} a^2+\frac{1}{2\epsilon} b^2$  with some well-chosen
$\epsilon$. 
More precisely, setting $\epsilon= n^{r}$, we obtain:
$$\ES[|\theta_n|^2 |\tilde{Z}_n^{(2)}|]\lesssim n^{r}\ES[|\theta_n|^4]+n^{-r}\ES[|\tilde{Z}_n^{(2)}|^2]\le n^{r-2\beta}+ n^{-r}\ES[|\tilde{Z}_n^{(2)}|^2].$$
Since $2\beta>1$, we know that  a $\delta>0$ exists such that $r = 2 \beta-1-\delta>0$ and
$$\frac{\ES[|\theta_n|^2 |\tilde{Z}_n^{(2)}|]}{n}\le n^{-2-\delta}+n^{\tcb{-2 \beta+\delta}}\ES|\tilde{Z}_n^{(2)}|^2.$$
Second, from Lemma \ref{lem:recursions}, for every $i\in\{1,\ldots,d\}: |\alpha_n^i| \lesssim \{n^2 \gamma_n\}^{-1}$ and 
\begin{align*}
|\alpha_n^i\omega_n(i)|&  \lesssim \frac{1}{\gamma_n n^2}\left(n^{-\delta'}\ES|\tZn^{(1)}|^2+n^{\tcb{\delta'}}\ES|\tilde{Z}_n^{(2)}|^2\right)\\
&\le n^{-2-\delta'}+ n^{\beta+\delta'-2}\ES|\tilde{Z}_n^{(2)}|^2.
\end{align*}
Plugging the two previous controls into the second statement of Lemma \ref{lem:recursions}, we get a positive $\delta$ such that a $n_0$ exists such that 
for all $n\ge n_0$:
\begin{eqnarray*}
\ES[|\tZnp^{(2)}|^2]&\le & \left( \left(1-\frac{1}{n+1}\right)^2+C [n^{\tcb{-2 \beta+\delta}}+
n^{\beta+\delta'-2}]\right)\ES[|\tZn^{(2)}|^2]+\frac{{\rm Tr}(\Sigma^\star)}{(n+1)^2}\\
& & + C \left(  n^{-(2+\delta)} +n^{-(2+\delta')} + n^{-(2+\beta/2)} + n^{-3+\beta}\right).
\end{eqnarray*}
We choose $\delta = \beta-1/2>0$ and $\delta'=1/2-\beta/2>0$. In the meantime, we also have $2+\delta \wedge 2+\delta' \wedge 2+\beta/2 \wedge 3-\beta > 2$.
According to this choice, we can apply Lemma \ref{lem:tecnique-clef} and deduce that
a $\eta>0$ exists such that:
$$ \forall n \geq 1 \qquad 
\ES[|\tZnp^{(2)}|^2]\le \frac{{\rm Tr}(\Sigma^\star)}{n+1}(1+O(n^{-\eta}).$$

\textbf{Step 3: Control of the covariance}
Owing to the previous control of $\ES[|\tilde{Z}_n^{(2)}|^2]$, one can deduce from Cauchy-Schwarz inequality that:
\begin{equation}\label{previouscontrol}
\ES[|\theta_n|^2 |\tilde{Z}_n^{(2)}|]\le \sqrt{ \ES[|\theta_n|^4]} \sqrt{ \ES[ |\tilde{Z}_n^{(2)}|^2]} \lesssim \frac{\gamma_n}{\sqrt{n}}.
\end{equation}
Plugging this control into Lemma \ref{lem:recursions} $i)$, we obtain that for all $i\in\{1,\ldots,d\}$:
$$\omega_{n+1}(i)=\left(1-\gamma_{n+1}\mu_i^\star\right)\frac{n}{n+1} \omega_n(i)+O\left(\frac{\gamma_{n+1}}{n+1}\right)+O\left(\frac{\gamma_{n+1}^2}{\sqrt{n}}\right).$$
Now, remark that  $\gamma_n\lesssim \sqrt{n}$ so that we can conclude that $\ES[|\theta_n|^2 |\tilde{Z}_n^{(2)}|]$ shall be neglected in the evolution of $(\omega_n(i))_{n \geq 1}$:
$$\omega_{n+1}(i)=\left(1-\gamma_{n+1}\mu_i^\star\right)\frac{n}{n+1} \omega_n(i)+O\left(\frac{\gamma_{n+1}}{n+1}\right).$$
From Lemma \ref{lem:contechnique2} stated in Appendix \ref{appendix:A}, we conclude that:
\begin{equation}\label{dlomega}
\forall i\in\{1,\ldots,d\} \qquad \omega_n(i)=O\left(\frac{1}{n}\right).
\end{equation}

\textbf{Step 4: Expansion of the quadratic error}
We can conclude the proof of Proposition \ref{prop:mainproof} $ii)$. From the previous upper bounds \eqref{dlomega} and \eqref{previouscontrol}, we have:
$$
\sum_{i=1}^d \alpha_n^i \omega_n(i) = O\left(\frac{1}{n^2  \gamma_n}\right) \times O\left(\frac{1}{n}\right) \quad \text{and} \quad \frac{ \ES[|\theta_n|^2 |\tilde{Z}_n^{(2)}|] }{n} = O\left( \frac{\gamma_n}{n \sqrt{n}} \right).
$$
We use these bounds  in the statement of Lemma \ref{lem:recursions} $ii)$ and deduce that:
\begin{eqnarray*}
\ES[|\tZnp^{(2)}|^2] &\le & \left(1-\frac{1}{n+1}\right)^2\ES[|\tZn^{(2)}|^2]+O \left( \frac{1}{n^3 \gamma_n}\right) +O \left( \frac{\gamma_n}{n^{\frac{3}{2}}}\right)+ O\left(\frac{\sqrt{\gamma_n}}{n^{2}}\right) \\
& \le & \left(1-\frac{1}{n+1}\right)^2\ES[|\tZn^{(2)}|^2]+O \left(\frac{1}{n^{(\frac{3}{2}+\beta)\wedge(3-\beta)}} \right)
\end{eqnarray*}
where we used that   $\gamma_n=\gamma n^{-\beta}$ so that $\sqrt{\gamma_n}n^{-2} = o(\gamma_n n^{-3/2})$ regardless the value of $\beta \in (1/2,1)$.
Applying again Lemma \ref{lem:tecnique-clef} with $r=+\infty$ and $q_{\beta}=(\frac{3}{2}+\beta)\wedge(3-\beta)$, one obtains
the announced result.
\end{proof}

\begin{rem}[About the linear case]\label{rem:linearcase} When $x\mapsto D^2 f(x)$ is constant (or also when the function $f$ to minimize is ${\cal C}^3$ with a third partial derivatives Lipschitz and null at $\theta^\star)$, we can remark that 
$\Lambda_n=\Lambda^\star$ (or that $\Lambda_n-\Lambda^\star=O(|\theta_n|^2)$). Following carefully the proof  of Lemma \ref{lem:recursions}, we can deduce that the error term $n^{-1}O(\ES[|\theta_n|^2 |\tilde{Z}_n^{(2)}|])$
vanishes (or is replaced by $n^{-1}O(\ES[|\theta_n|^3 |\tilde{Z}_n^{(2)}|])\lesssim (n^{-1}\gamma_n)^{\frac{3}{2}}$ if the $(L^6,\sqrt{\gamma_n})$-consistency holds). Hence:
\begin{equation*}
\ES[|\tZnp^{(2)}|^2]\le \left(1-\frac{1}{n+1}\right)^2\ES[|\tZn^{(2)}|^2]+O (n^{-3}\gamma_n^{-1})+O\left(\frac{\sqrt{\gamma_n}}{n^2}\right).
\end{equation*}
The rate is then optimized by choosing $\beta=2/3$, leading to an exponent $n^{-\frac{4}{3}}$.
\end{rem}

\section{$(L^{p}$,$\sqrt{\gamma_n})$-consistency - (Theorem \ref{theo:weaksgd})}\label{sec:proofsgd}
The  main objective of this section is to prove Theorem \ref{theo:weaksgd} $iii)$.  Our analysis is based on a Lyapunov-type approach with the help of  $V_p:\ER^d\rightarrow\ER$ 
defined for a given $p\ge1$ by:
$$V_p(x)=f^p(x)\exp(\phi(f(x)).$$
We have the following result:
\begin{thm}[Convergence rate of $(\tn)_{n \geq 1}$ with $\mathbf{H_{\phi}}$ \label{thm:sgd}]
Let $p\ge1$ and  assume  $(\mathbf{H_\phi})$ and $\mathbf{(H^{\phi}_{\Sigma_p})}$. Let $(\gamma_n)_{n\rightarrow}$ be a non-increasing sequence such that $\gamma_n\rightarrow0$ as $n\rightarrow+\infty$. Then,
\begin{itemize}
\item[$i)$] An integer $n_0\in\mathbb{N}$ \tcr{and some positive  $c_1$ and $c_2$} exist such that 
\begin{equation}\label{eq:recursionvvp}
\forall n\ge n_0,\quad\ES[V_p(\theta_{n+1})]\le (1-c_1\gamma_{n+1}) \ES[V_p(\theta_n)]+c_2\gamma_{n+1}^{p+1}.
\end{equation}
\item[$ii)$] Furthermore, if $\gamma_n-\gamma_{n+1}=o(\gamma_{n+1}^2)$ as $n\rightarrow+\infty$, then 

$$
\forall n \geq 1 \qquad \mathbb{E} \left[ V_p(\theta_n) \right] \leq C_p \{\gamma_n\}^{p}.
$$
 In particular, 
 $$\forall n \geq 1 \qquad \mathbb{E} [f^p(\tn)]\le C_p \{\gamma_n\}^{p}.$$
 \end{itemize}
\end{thm}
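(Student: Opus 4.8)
\emph{Proof strategy.} For $i)$ the plan is to write $V_p=g\circ f$ with $g(t)=t^p e^{\phi(t)}$, so that $\nabla V_p=g'(f)\nabla f$ and $D^2V_p=g''(f)\,\nabla f\otimes\nabla f+g'(f)\,D^2f$, and to perform a second-order Taylor expansion of $V_p$ between $\theta_n$ and $\theta_{n+1}=\theta_n-\gamma_{n+1}\nabla f(\theta_n)+\gamma_{n+1}\Delta M_{n+1}$, namely, for some $\xi_n$ on the segment $[\theta_n,\theta_{n+1}]$,
\begin{align*}
V_p(\theta_{n+1})=V_p(\theta_n)&-\gamma_{n+1}g'(f(\theta_n))|\nabla f(\theta_n)|^2+\gamma_{n+1}g'(f(\theta_n))\langle\nabla f(\theta_n),\Delta M_{n+1}\rangle\\
&+\tfrac12 D^2V_p(\xi_n)(\theta_{n+1}-\theta_n)^{\otimes 2}.
\end{align*}
Conditioning on $\mathcal{F}_n$ kills the martingale term, and the whole point of the weight $e^{\phi(f)}$ is that it turns $\mathbf{(H_\phi)}$-$ii)$ into a Polyak--\L ojasiewicz inequality for $V_p$: multiplying the inequality in $\mathbf{(H_\phi)}$-$ii)$ by $f>0$ and using $p\ge1$,
$$\bigl(p+f\phi'(f)\bigr)|\nabla f|^2=(p-1)|\nabla f|^2+\Bigl(\phi'(f)|\nabla f|^2+\tfrac{|\nabla f|^2}{f}\Bigr)f\ \ge\ m\,f,$$
so that $g'(f(\theta_n))|\nabla f(\theta_n)|^2=f(\theta_n)^{p-1}e^{\phi(f(\theta_n))}\bigl(p+f(\theta_n)\phi'(f(\theta_n))\bigr)|\nabla f(\theta_n)|^2\ge m\,V_p(\theta_n)$; hence the drift term is $\le-m\gamma_{n+1}V_p(\theta_n)$.

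The delicate part is the second-order remainder $R_n:=\tfrac12\gamma_{n+1}^2 D^2V_p(\xi_n)(\nabla f(\theta_n)-\Delta M_{n+1})^{\otimes 2}$. I would bound $\|D^2V_p(x)\|$ using that $D^2f$ is bounded, that $\mathbf{(H_\phi)}$-$ii)$ forces $|\nabla f|^2\le Mf$ (whence $f$ sub-quadratic and $L$-smooth), and that $\phi$ is non-decreasing with $\phi''\le0$ at infinity, obtaining $\|D^2V_p(x)\|\lesssim(1+f(x)^{p-1})e^{c_0\phi(f(x))}$ for some $c_0$. Then the $L$-Lipschitz continuity of $\nabla f$ and sub-quadraticity give $f(\xi_n)\le C\bigl(f(\theta_n)+\gamma_{n+1}|\Delta M_{n+1}|^2\bigr)$, and the eventual concavity of $\phi$ gives $\phi(f(\xi_n))\le C\phi(f(\theta_n))+C\phi(\gamma_{n+1}|\Delta M_{n+1}|^2)+C$. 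Substituting and using $|\nabla f(\theta_n)-\Delta M_{n+1}|^2\le 2Mf(\theta_n)+2|\Delta M_{n+1}|^2$, the conditional expectation of $R_n$ splits into: (a) a term $\lesssim\gamma_{n+1}^2 V_p(\theta_n)$, which for $n\ge n_0$ (i.e.\ $\gamma_{n+1}$ small) is $\le\tfrac m2\gamma_{n+1}V_p(\theta_n)$ and is absorbed by the drift; (b) cross terms $\gamma_{n+1}^2 f(\theta_n)^{p-1}e^{\phi(f(\theta_n))}\times(\text{moment of }\Delta M_{n+1})$, split by Young's inequality into $\varepsilon V_p(\theta_n)+C_\varepsilon\gamma_{n+1}^{p+1}$; and (c) a purely noise-driven term $\lesssim\gamma_{n+1}^{p+1}\,\E\bigl[(1+|\Delta M_{n+1}|^{2p+2})e^{c_0\phi(\gamma_{n+1}|\Delta M_{n+1}|^2)}\,\big|\,\mathcal{F}_n\bigr]$, for which I would absorb the constant $c_0$ into the argument of $\phi$ by (approximate) homogeneity, $c_0\phi(\gamma_{n+1}t)\le\phi(c_1\gamma_{n+1}t)$, and then invoke $\mathbf{(H^\phi_{\Sigma_p})}$ with $u=c_1\gamma_{n+1}$ together with the local boundedness of $\rho_p$ near $0$ to bound (c) by $C\gamma_{n+1}^{p+1}$. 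Taking full expectations and collecting these pieces yields $\E[V_p(\theta_{n+1})]\le(1-c_1\gamma_{n+1})\E[V_p(\theta_n)]+c_2\gamma_{n+1}^{p+1}$ for $n\ge n_0$. This bookkeeping — matching the power of $|\Delta M_{n+1}|$, the argument of $\phi$, and the exponent in $\rho_p(\gamma_{n+1})$ exactly against $\mathbf{(H^\phi_{\Sigma_p})}$ — is where I expect the main difficulty to lie, and it is precisely the reason the noise assumption is stated with the weight $e^{\phi(u|\Delta M|^2)}$.

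For $ii)$, set $v_n=\E[V_p(\theta_n)]$; a crude variant of the one-step estimate (valid for every $n$) ensures $v_n<\infty$, and $i)$ gives $v_{n+1}\le(1-c_1\gamma_{n+1})v_n+c_2\gamma_{n+1}^{p+1}$ for $n\ge n_0$. I would then prove by induction that $v_n\le K\{\gamma_n\}^p$ for $K$ large enough: the inductive step reduces to $K(\gamma_n^p-\gamma_{n+1}^p)-c_1K\gamma_{n+1}\gamma_n^p+c_2\gamma_{n+1}^{p+1}\le0$, and since $\gamma_n-\gamma_{n+1}=o(\gamma_{n+1}^2)$ implies $\gamma_n^p-\gamma_{n+1}^p\le p\gamma_n^{p-1}(\gamma_n-\gamma_{n+1})=o(\gamma_{n+1}^{p+1})$ while $\gamma_n^p\sim\gamma_{n+1}^p$, the left-hand side is $(-c_1K+c_2+o(K))\gamma_{n+1}^{p+1}<0$ as soon as $K>c_2/c_1$ and $n$ is large; the base case is then secured by enlarging $K$ to dominate $v_{n_0},\dots$ up to that threshold (alternatively, one invokes directly a deterministic recursion lemma of the type used in Appendix \ref{appendix:A}). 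This gives $\E[V_p(\theta_n)]\le C_p\{\gamma_n\}^p$, and since $V_p\ge f^p$ pointwise, also $\E[f^p(\theta_n)]\le C_p\{\gamma_n\}^p$.
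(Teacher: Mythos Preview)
Your overall architecture matches the paper: second--order Taylor expansion of $V_p=f^p e^{\phi(f)}$, the drift term handled exactly as you do via $g'(f)|\nabla f|^2\ge m\,V_p$ (this is the paper's Lemma~\ref{lem2WMR}\,$i)$), the martingale term killed by conditioning, and part $ii)$ by the same induction. The gap is entirely in the control of the second--order remainder $R_n$, and it is a real one, not a matter of bookkeeping.

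\textbf{The Hessian bound is too optimistic.} You claim $\|D^2V_p(x)\|\lesssim(1+f(x)^{p-1})e^{c_0\phi(f(x))}$. If $c_0=1$ this is $V_0+V_{p-1}$, which is \emph{false} in general: from $D^2V_p=V_p[\psi_1\nabla f\otimes\nabla f+\psi_2 D^2f]$ with $\psi_2=p/f+\phi'(f)$, the term $V_p\phi'(f)$ can be of order $V_p$ itself (e.g.\ when $\phi(t)=t$, i.e.\ $|\nabla f|$ bounded at infinity). The correct bound is the paper's Lemma~\ref{lem2WMR}\,$ii)$,
\[
\|D^2V_p(\xi)\|\lesssim V_{p-1}(\xi)+\frac{V_p(\xi)}{1+|\nabla f(\xi)|^2},
\]
and the second term is essential. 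When multiplied by $|\nabla f(\theta_n)|^2$, it is neutralised only because one can show $|\nabla f(\theta_n)|^2/(1+|\nabla f(\xi_n)|^2)\lesssim 1+|\Delta M_{n+1}|^2$ (Lipschitz continuity of $\nabla f$; this is Step~2 of Lemma~\ref{lem3WMR}). Without this structure, $\|D^2V_p(\xi_n)\|\cdot|\nabla f(\theta_n)|^2$ produces a $V_{p+1}$--type term you cannot absorb.

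\textbf{The constant in front of $f(\theta_n)$ must be exactly $1$.} You write $f(\xi_n)\le C\bigl(f(\theta_n)+\gamma_{n+1}|\Delta M_{n+1}|^2\bigr)$ ``by sub-quadraticity'', and then $\phi(f(\xi_n))\le C\phi(f(\theta_n))+\dots$. This yields $e^{C\phi(f(\theta_n))}$ in place of $e^{\phi(f(\theta_n))}$, and for $\phi(t)=t$ (an admissible case) the ratio $e^{(C-1)f(\theta_n)}$ is unbounded --- your term (a) is then \emph{not} $\lesssim\gamma_{n+1}^2V_p(\theta_n)$. The paper instead Taylor-expands $f$ itself and uses that $-\nabla f$ is a descent direction to get $f(\xi_n)\le f(\theta_n)+C\gamma_{n+1}|\Delta M_{n+1}|^2$ with coefficient exactly $1$ (Lemma~\ref{lem3WMR}\,$i)$); only then does sub-additivity of $\phi$ (Lemma~\ref{lem0WMR}) give $e^{\phi(f(\xi_n))}\le e^{c_\phi}\,e^{\phi(f(\theta_n))}\,e^{\phi(C\gamma_{n+1}|\Delta M_{n+1}|^2)}$. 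The same issue invalidates your ``approximate homogeneity'' $c_0\phi(\gamma t)\le\phi(c_1\gamma t)$: for concave $\phi$ one has $\phi(c_1 s)\le c_1\phi(s)$, the wrong direction, and for bounded $\phi$ the inequality fails outright. In the paper no such step is needed: the noise factor comes out as $e^{\phi(C\gamma_{n+1}|\Delta M_{n+1}|^2)}$, and $\mathbf{(H^\phi_{\Sigma_p})}$ is applied with $u=C\gamma_{n+1}$.

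In short: the strategy is right, but the remainder term must be handled with the sharper Hessian estimate $V_{p-1}+V_p/(1+|\nabla f|^2)$ and the \emph{exact} one-step descent $f(\xi)\le f(x)+C\gamma|\varepsilon|^2$. Both are proved in the paper as separate lemmas (Lemmas~\ref{lem2WMR}\,$ii)$ and~\ref{lem3WMR}\,$i)$), and both are where the exponential weight makes loose constants fatal. Your part~$ii)$ is correct and essentially identical to the paper's.
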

Note that the condition  $\gamma_n-\gamma_{n+1}=o(\gamma_{n+1}^2)$ is satisfied when $\gamma_n=\gamma n^{-\beta}$ with $\beta\in(0,1)$. Therefore,  Theorem \ref{theo:weaksgd} $iii)$ holds true.
\noindent 

To prove Theorem \ref{thm:sgd} $i)$, we need some technical results related to $\phi$ and $V_p$. The first result is a simple sub-additive property on $\phi$ that essentially relies on the concavity property on $[x_0,+\infty)$.
\begin{lemma}\label{lem0WMR} Assume that $\phi$ satisfies $(\mathbf{H_\phi})(i)$, then a constant $c_\phi$ exists such that for all $x, y\in\ER_+$:
$$\phi(x+y)\le \phi(x)+\phi(y)+c_\phi.$$
\end{lemma}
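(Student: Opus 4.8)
The plan is to exploit the two structural facts about $\phi$ guaranteed by $(\mathbf{H_\phi})(i)$: it is non-decreasing on $\ER_+$, and it is concave on $[x_0,+\infty)$ (since $\phi''\le 0$ there). The statement $\phi(x+y)\le\phi(x)+\phi(y)+c_\phi$ is exactly the assertion that $\phi$ is ``subadditive up to an additive constant''; this is a standard consequence of eventual concavity, and the only real work is to handle the bounded region $[0,x_0]$ where concavity may fail, absorbing its contribution into the constant $c_\phi$.

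First I would treat the genuinely concave case. Suppose $x,y\ge x_0$. A concave function $g$ on an interval containing $0$ in its closure that is also non-negative satisfies $g(s+t)\le g(s)+g(t)$; more precisely, for $x,y\ge x_0$ one writes, using concavity of $\phi$ on $[x_0,+\infty)$ and $x, y \le x+y$,
\begin{equation*}
\phi(x)\ge \frac{y}{x+y}\,\phi(x_0)+\frac{x}{x+y}\,\phi(x+y),
\qquad
\phi(y)\ge \frac{x}{x+y}\,\phi(x_0)+\frac{y}{x+y}\,\phi(x+y),
\end{equation*}
where the chord inequality is applied on $[x_0,x+y]$ with $x$ (resp. $y$) written as the corresponding convex combination of $x_0$ and $x+y$ — here one needs $x\ge x_0$ and $y\ge x_0$ so that both points lie in the valid range, which is why the small region is split off. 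Adding the two inequalities gives $\phi(x)+\phi(y)\ge \phi(x_0)+\phi(x+y)$, i.e. $\phi(x+y)\le\phi(x)+\phi(y)-\phi(x_0)\le\phi(x)+\phi(y)$, so the inequality holds with no extra constant on $\{x,y\ge x_0\}$.

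Next I would dispose of the remaining cases by monotonicity. If at least one of $x,y$ is $\le x_0$, say $x\le x_0$, then $\phi(x+y)\le\phi(x_0+y)$; now if also $y\ge x_0$ one applies the concave-case bound to $\phi(x_0+y)\le\phi(x_0)+\phi(y)$ and concludes $\phi(x+y)\le\phi(x_0)+\phi(y)\le\phi(x)+\phi(y)+\phi(x_0)$, using $\phi\ge 0$. If instead both $x,y\le x_0$, then $x+y\le 2x_0$ and $\phi(x+y)\le\phi(2x_0)$, which we simply bound by the constant $\phi(2x_0)\le\phi(x)+\phi(y)+\phi(2x_0)$. Taking $c_\phi=\phi(2x_0)$ (which also dominates $\phi(x_0)$ since $\phi$ is non-decreasing) covers all cases uniformly. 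There is no serious obstacle here; the only point requiring a little care is the bookkeeping of which chord inequality is valid in which range, and making sure the small-argument region is always pushed into the additive constant via monotonicity and non-negativity of $\phi$.
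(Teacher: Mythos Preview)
Your overall strategy---use concavity on $[x_0,\infty)$ for the main case and absorb the compact region $[0,x_0]$ into the constant via monotonicity and non-negativity---is exactly the paper's strategy. The case split and the handling of small arguments are fine.

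The problem is in your treatment of the case $x,y\ge x_0$. The coefficients you write in the chord inequality are the ones for $x_0=0$, not for general $x_0$. Writing $x$ as a convex combination of $x_0$ and $x+y$ gives
\[
\phi(x)\;\ge\;\frac{y}{x+y-x_0}\,\phi(x_0)+\frac{x-x_0}{x+y-x_0}\,\phi(x+y),
\]
and similarly for $\phi(y)$. Adding these two correct inequalities yields
\[
\phi(x)+\phi(y)\;\ge\;\frac{x+y}{x+y-x_0}\,\phi(x_0)+\frac{x+y-2x_0}{x+y-x_0}\,\phi(x+y),
\]
which does \emph{not} reduce to $\phi(x+y)\le\phi(x)+\phi(y)-\phi(x_0)$: the coefficient in front of $\phi(x+y)$ is strictly less than $1$ (and even vanishes when $x=y=x_0$), so you cannot isolate $\phi(x+y)$ cleanly. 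Your subsequent case ``$x\le x_0\le y$'' then relies on this incorrect bound applied at $(x_0,y)$, so it also breaks.

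The paper sidesteps the issue by using the derivative rather than chords: since $\phi'$ is non-increasing on $[x_0,\infty)$, the map $x\mapsto\phi(x+y)-\phi(x)$ is non-increasing there, hence for all $x\ge x_0$ and all $y\ge 0$,
\[
\phi(x+y)\le \phi(x)+\phi(x_0+y)-\phi(x_0).
\]
Then, because $\phi'$ is bounded above (it is continuous on $[0,x_0]$ and non-increasing on $[x_0,\infty)$), one has $\phi(x_0+y)\le \phi(y)+Cx_0$ for all $y\ge 0$, which gives the desired inequality for $x\ge x_0$. The remaining case $x\le x_0$ is then handled exactly as you do, by monotonicity. Replacing your chord argument by this ``decreasing increments'' argument fixes the proof with no change to the rest of your plan.
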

\begin{proof} Since $\phi'' \leq 0$ on $[x_0,+\infty)$, the function $\phi$ is concave on $[x_0,+\infty)$. Hence, the function $x\mapsto \phi(x+y)-\phi(x)$ is decreasing on $[x_0,+\infty)$ and we deduce that:
$$\forall x \ge x_0 \quad \phi(x+y)\le \phi(x)+\phi(x_0+y)-\phi(x_0).$$
Since $\phi'$ is decreasing on $[x_0,+ \infty)$, then $\phi'$ is upper-bounded and  a constant $C>0$ exists such that $\phi(y+x_0)\le \phi(y)+C x_0$. We then deduce that:
\begin{equation}\label{eq:cas1}
\forall x \ge x_0 \quad \forall y \geq 0 \qquad 
\phi(x+y)\le \phi(x)+ \phi(y)+C x_0-\phi(x_0).
\end{equation}
In the other situation when $x\le x_0$, the fact that $\phi$ is non-decreasing yields and Equation \eqref{eq:cas1} applied at point $x_0$ yields:
$$\phi(x+y)\le \phi(x_0+y)\le \phi(y)+C x_0\le \phi(x)+ \phi(y)+C x_0.$$
We then obtain the desired inequality for any value of $x$ and $y$ in $\R_+$.
\end{proof}
The second key element of our study is a straightforward computation of the first and second derivatives of $V_p$.

\begin{lemma} \label{lem1WMR}For any $p \in \mathbb{N}^\star$ and any $x\in \R^d\setminus \{\theta^\star\}$, we have:
\begin{itemize}
\item[$i)$]
$$\nabla V_p(x)=V_p(x)\left(p \frac{\nabla f(x)}{f(x)}+\phi'(f(x))\nabla f(x)\right).$$
\item[$ii)$]
$$ D^2 V_p(x)=V_p(x)\left[\psi_1(x)\nabla f(x)\otimes \nabla f(x)+ \psi_2(x) D^2 f(x)\right],$$
where $\psi_1$ and $\psi_2$ are given by:
{\small $$\hspace{-1cm}\psi_1(x):=\left( \frac{p}{f(x)}+\phi'(f(x))\right)^2-\frac{p}{f^2(x)}+\phi''(f(x)) \quad \text{and} \quad \psi_2(x):=\frac{p}{f(x)}+\phi'(f(x)).$$}
\end{itemize}

\end{lemma}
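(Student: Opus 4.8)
The plan is to prove Lemma \ref{lem1WMR} by direct differentiation of $V_p(x) = f^p(x) \exp(\phi(f(x)))$, treating it as a composition of the scalar function $t \mapsto t^p e^{\phi(t)}$ with $f$. Since $f(x) > 0$ for $x \neq \theta^\star$ (recall $f(\theta^\star) = 0$ is the unique minimum and $f$ is coercive), the expression $g(t) := t^p e^{\phi(t)}$ is smooth on the relevant range and $V_p = g \circ f$.

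First I would compute the scalar derivatives of $g$. Writing $\log g(t) = p \log t + \phi(t)$, logarithmic differentiation gives $g'(t) = g(t)\left( \frac{p}{t} + \phi'(t)\right)$. Differentiating once more, $g''(t) = g'(t)\left(\frac{p}{t} + \phi'(t)\right) + g(t)\left(-\frac{p}{t^2} + \phi''(t)\right) = g(t)\left[\left(\frac{p}{t}+\phi'(t)\right)^2 - \frac{p}{t^2} + \phi''(t)\right]$. These are exactly the scalar coefficients $\psi_1$ and $\psi_2$ once we substitute $t = f(x)$.

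Then I would apply the chain rule in $\R^d$. For $i)$: $\nabla V_p(x) = g'(f(x)) \nabla f(x) = V_p(x)\left(\frac{p}{f(x)} + \phi'(f(x))\right)\nabla f(x)$, which is the claimed formula. For $ii)$: the Hessian of a composition $g \circ f$ is $D^2(g\circ f)(x) = g''(f(x))\, \nabla f(x) \otimes \nabla f(x) + g'(f(x))\, D^2 f(x)$, and substituting the scalar derivatives computed above yields $D^2 V_p(x) = V_p(x)\left[\psi_1(x)\, \nabla f(x)\otimes\nabla f(x) + \psi_2(x)\, D^2 f(x)\right]$ with $\psi_1, \psi_2$ as stated.

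There is essentially no obstacle here: the statement is a routine computation, the only thing to be careful about is that the formulas only make sense away from $\theta^\star$ (where $f$ vanishes and the $1/f$ and $1/f^2$ terms blow up), which is precisely why the lemma is stated on $\R^d \setminus \{\theta^\star\}$. The content-bearing use of this lemma comes later, when one checks that under $(\mathbf{H_\phi})$ the quantity $\psi_1 |\nabla f|^2 + \psi_2 \langle D^2 f \cdot, \cdot\rangle$ is bounded below, i.e. that $V_p$ is a suitable Lyapunov function; but that is the job of the proof of Theorem \ref{thm:sgd}, not of this elementary differentiation lemma.
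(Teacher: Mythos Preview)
Your proof is correct and matches the paper's treatment: the paper actually omits the proof entirely, introducing the lemma as ``a straightforward computation of the first and second derivatives of $V_p$,'' which is precisely the chain-rule computation you carry out via $V_p = g\circ f$ with $g(t)=t^p e^{\phi(t)}$.
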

\begin{lemma}\label{lem2WMR}
Assume that $f$ satisfies $(\mathbf{H_\phi})$, then one has
\begin{itemize}
\item[$i)$] A constant $\alpha>0$ exists such that:
$$\inf_{x\in\ER^d}\frac{\langle \nabla V_p(x),\nabla f(x)\rangle}{V_p(x)}\ge\alpha>0.$$ 
\item[$ii)$] For any matrix norm $\|\,.\,\|$, a positive constant $C>0$ exists such that for any $\xi\in\ER^d$,
$$ \|D^2 V_p(\xi)\|\le C\left( V_{p-1}(\xi)+ \frac{V_p(\xi)}{1+|\nabla f(\xi)|^2}\right).$$
\end{itemize}

\end{lemma}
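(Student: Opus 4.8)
\textbf{Plan of proof for Lemma \ref{lem2WMR}.}

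The plan is to derive both statements directly from the explicit formulas for $\nabla V_p$ and $D^2 V_p$ obtained in Lemma \ref{lem1WMR}, combining them with the two-sided control on $\phi'(f)|\nabla f|^2 + |\nabla f|^2/f$ provided by $(\mathbf{H_\phi})(ii)$ and the sub-quadratic/$L$-smoothness consequences of $(\mathbf{H_\phi})$ discussed in Subsection \ref{sec:hphi}. First, for $i)$, I would compute
$$
\frac{\langle \nabla V_p(x),\nabla f(x)\rangle}{V_p(x)} = \frac{p\,|\nabla f(x)|^2}{f(x)} + \phi'(f(x))\,|\nabla f(x)|^2
= (p-1)\frac{|\nabla f(x)|^2}{f(x)} + \left(\phi'(f(x))|\nabla f(x)|^2 + \frac{|\nabla f(x)|^2}{f(x)}\right).
$$
Since $p\ge 1$ the first term is nonnegative, and the bracketed term is bounded below by $m>0$ by \eqref{eq:hphi}; hence the infimum is at least $m$, which proves $i)$ with $\alpha := m$. (For $x$ near $\theta^\star$ one should note $f(x)>0$ and $|\nabla f(x)|^2/f(x)$ stays bounded away from $0$ because $D^2 f(\theta^\star)$ is positive-definite, so there is no degeneracy; the bound $\eqref{eq:hphi}$ already handles this uniformly on $\R^d\setminus\{\theta^\star\}$.)

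For $ii)$, I would start from the formula in Lemma \ref{lem1WMR}$(ii)$, namely $D^2 V_p(\xi) = V_p(\xi)[\psi_1(\xi)\,\nabla f(\xi)\otimes\nabla f(\xi) + \psi_2(\xi)\,D^2 f(\xi)]$, and bound the two pieces separately. For the $\psi_2 D^2 f$ term: $D^2 f$ is bounded by hypothesis, so it suffices to bound $V_p(\xi)|\psi_2(\xi)| = V_p(\xi)\big(\tfrac{p}{f(\xi)} + \phi'(f(\xi))\big)$. Multiplying and dividing by $|\nabla f(\xi)|^2$, one gets $V_p(\xi)|\psi_2(\xi)| \le \big(\phi'(f)|\nabla f|^2 + p|\nabla f|^2/f\big)\cdot V_p(\xi)/|\nabla f(\xi)|^2 \lesssim V_p(\xi)/|\nabla f(\xi)|^2$ by \eqref{eq:hphi}; and since $|\nabla f|^2/f \ge$ const near $\theta^\star$ and $f^p \le V_p$, one can also absorb this into $V_{p-1} + V_p/(1+|\nabla f|^2)$ after checking the behaviour for small $|\nabla f|$ (where $f\lesssim|\nabla f|^2$, so $V_p/f \lesssim V_{p-1}\cdot e^{\phi(f)}/$bounded $\lesssim V_{p-1}$ up to constants, using that $\phi(f)$ is bounded on compacts). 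For the rank-one $\psi_1\,\nabla f\otimes\nabla f$ term, $\|\nabla f(\xi)\otimes\nabla f(\xi)\| \lesssim |\nabla f(\xi)|^2$, so I need $V_p(\xi)|\psi_1(\xi)|\,|\nabla f(\xi)|^2$. Expanding $\psi_1 = \big(\tfrac{p}{f}+\phi'(f)\big)^2 - \tfrac{p}{f^2} + \phi''(f)$, multiply through by $|\nabla f|^2$: the square term gives $\big(\tfrac{p}{f}+\phi'(f)\big)^2|\nabla f|^2 = \big(\tfrac{p}{f}+\phi'(f)\big)\cdot\big(\tfrac{p}{f}+\phi'(f)\big)|\nabla f|^2 \lesssim \big(\tfrac{p}{f}+\phi'(f)\big)\cdot M$ which is then the $\psi_2$-type term already controlled; the $-p/f^2$ term times $|\nabla f|^2$ equals $-p\big(|\nabla f|^2/f\big)/f \lesssim 1/f$, again a $\psi_2$-type quantity; and the $\phi''(f)|\nabla f|^2$ term is controlled by $|\phi''(f)|\,|\nabla f|^2$, which is nonpositive for $f\ge x_0$ by $(\mathbf{H_\phi})(i)$ and bounded on the compact set $\{f\le x_0\}$. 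Collecting, $V_p(\xi)|\psi_1(\xi)|\,|\nabla f(\xi)|^2 \lesssim V_{p-1}(\xi) + V_p(\xi)/(1+|\nabla f(\xi)|^2)$ as well.

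The main obstacle I anticipate is the bookkeeping near $\theta^\star$ (small $|\nabla f|$, small $f$), where several ratios like $1/f$, $1/f^2$, $\phi'(f)$, $\phi''(f)$ individually blow up or need careful comparison: the trick throughout is never to bound these pointwise in isolation but always in the combinations $\phi'(f)|\nabla f|^2 + |\nabla f|^2/f$ that \eqref{eq:hphi} controls, and to exploit that near $\theta^\star$ positive-definiteness of $D^2 f(\theta^\star)$ gives $c_1|x-\theta^\star|^2 \le f(x)$ and $|\nabla f(x)|^2 \asymp f(x)$, so that $V_p/f \asymp V_{p-1}$ up to the bounded factor $e^{\phi(f)}$ and the two-term right-hand side $V_{p-1} + V_p/(1+|\nabla f|^2)$ is genuinely an upper bound. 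Away from $\theta^\star$ (large $|x|$), the term $V_p/(1+|\nabla f|^2)$ is the dominant one since $|\nabla f|$ may vanish at infinity, and it exactly matches the $V_p\psi_2$ contribution there. Assembling the two regimes via a finite partition of $\R^d$ and absorbing all constants finishes the proof; I would state the constant $C$ at the end as the maximum of the finitely many constants produced, independent of $\xi$.
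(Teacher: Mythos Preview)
Your proof of $i)$ is essentially identical to the paper's, and correct.

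For $ii)$, your overall strategy---bound the $\psi_1\,\nabla f\otimes\nabla f$ and $\psi_2\,D^2 f$ pieces separately via the formulas of Lemma~\ref{lem1WMR} and the two-sided control \eqref{eq:hphi}---matches the paper's, but your execution takes an unnecessary detour. You route the $\psi_2$-bound through $V_p/|\nabla f|^2$ by multiplying and dividing by $|\nabla f|^2$, which then forces a near/far-from-$\theta^\star$ case analysis to reconcile $V_p/|\nabla f|^2$ with $V_{p-1}+V_p/(1+|\nabla f|^2)$. The paper avoids this entirely by using the \emph{exact identity} $V_p(\xi)/f(\xi)=V_{p-1}(\xi)$: since $\psi_2=p/f+\phi'(f)$, one has directly $V_p\psi_2=pV_{p-1}+V_p\phi'(f)$, and it only remains to check that $(1+|\nabla f|^2)\phi'(f)$ is uniformly bounded (immediate from $\phi'$ bounded and $\phi'(f)|\nabla f|^2\le M$). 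The same identity disposes of all the $1/f$ and $1/f^2$ contributions in $\psi_1$ once one records $|\nabla f|^2/f^2\le C/f$ and $(\phi'(f))^2|\nabla f|^2\le C\phi'(f)$, leading to the clean intermediate bound $\langle y,D^2V_p\,y\rangle/\|y\|^2\le CV_p[\,1/f+\phi'(f)+\phi''(f)\,]$ without any partition of $\R^d$.

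One slip in your sketch: you write that ``$|\phi''(f)|\,|\nabla f|^2$ is nonpositive for $f\ge x_0$'', which cannot be. What is true is that $\phi''(f)|\nabla f|^2\le 0$ there, so it may be discarded when bounding the quadratic form \emph{from above}; the paper exploits exactly this by bundling $\phi'(f)+\phi''(f)$ together and using $\phi''\le 0$ to dominate the sum by $\phi'(f)$. If you genuinely want the two-sided matrix-norm bound as stated, you would also need control of $|\phi''(f)|\,|\nabla f|^2$ for large $f$, which neither your sketch nor the paper addresses---but this is irrelevant for the downstream use in Lemma~\ref{lem3WMR}, which only needs the upper bound on the quadratic form.
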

\begin{proof}
Below, $C$ refers to a large enough constant independent of $\xi$ whose value may change from line to line.

\noindent
 $i)$ We apply  Lemma \ref{lem1WMR} $i)$ and obtain that:
{\small $$\forall x \in \R^d \setminus \{\theta^\star\} \qquad \frac{\langle \nabla V_p(x),\nabla f(x)\rangle}{V_p(x)}=p \frac{\|\nabla f(x)\|^2}{f(x)}+\phi'(f(x))\|\nabla f(x)\|^2.$$}
The result then follows from Assumption $(\mathbf{H_\phi}) ii)$ and a continuity argument around $\theta^\star$.

%

\noindent $ii)$ 
 We apply Lemma \ref{lem1WMR} $ii)$ and write that $\forall y \in\R^d$:
{\small 
\begin{eqnarray*}
\frac{\langle y, D^2 V_p(\xi) y\rangle}{\|y\|^2} & = & V_p(\xi) \left[ \psi_1(\xi) \langle y, \nabla f(\xi)\otimes \nabla f(\xi)y \rangle + \psi_2(\xi) \langle y, D^2f(\xi)y\rangle \right]\\
& \leq &   V_p(\xi) \left(  \left[ \frac{2 p^2}{f^2(\xi)} + 2\{\phi'(f(\xi))\}^2 - \frac{p}{f^2(\xi)}+\phi''(f(\xi))\right] \|\nabla f(\xi)\|^2 \right.\\
 & &
\left. + \left[ \frac{p}{f(\xi)}+\phi'(f(\xi)) \right] \|D^2f(\xi)\|\right).
\end{eqnarray*}}
We now apply assumption $\bf{H_\phi}$: a large enough constant $C$ exists such that:
{\small $$
\frac{\|\nabla f(\xi)\|^2}{f^2(\xi)} \leq \frac{C}{f(\xi)} \qquad \text{and} \qquad \phi'(f(\xi))^2 \|\nabla f(\xi)\|^2 \leq  C \phi'(f(\xi)).
$$}

Since $\xi \longmapsto \|D^2f(\xi)\|$ is bounded under Assumption $\bf{H_\phi}$ from the norm equivalence in any  finite dimensional  real vector space, we then have that:
{\small 
\begin{eqnarray*}
\frac{\langle y, D^2 V_p(\xi) y\rangle}{\|y\|^2} &\leq & C V_p(\xi) \left[ \frac{1}{f(\xi)}+ \phi'(f(\xi))+\phi''(f(\xi))\right]\\
& \leq& C V_{p-1}(\xi) + \frac{C V_p(\xi)}{1+\|\nabla f(\xi)\|^2} 
 \left( 1+\|\nabla f(\xi)\|^2\right) \left(\phi'(f(\xi))+\phi''(f(\xi))\right).
\end{eqnarray*}
}
\noindent Since $\phi''(u)$ is negative for $u$ large enough, that $\phi'$ is bounded (it is a  non-increasing function on $[x_0,+\infty)$) and that Assumption $\bf{H_\phi}$ implies that $\lim \phi'(f(\xi))|\nabla f(\xi)|^2<+\infty$, we then deduce that:
$$\sup_{\xi\in\ER^d}(\phi'(f(\xi))+\phi''(f(\xi))(1+|\nabla f(\xi)|^2)<+\infty.$$
Hence, 
$$
\forall y \in \R^d \qquad 
\frac{\langle y, D^2 V_p(\xi) y\rangle}{\|y\|^2} \leq C \left( V_{p-1}(\xi)+\frac{V_p(\xi)}{1+\|\nabla f(\xi)\|^2}\right).
$$
The second assertion follows.\end{proof}
%
%
%

The next  lemma  will be useful to produce an efficient descent inequality.
\begin{lemma} \label{lem3WMR}
Suppose that $\mathbf{H}_{\phi}$ holds and consider $\rho\in[0,1]$. For any  $\gamma>0$, $\varepsilon>0$ define $\xi_{\gamma,\varepsilon,x}=x+\rho\gamma \left(-\nabla f(x)+\varepsilon\right)$. Then,
\begin{itemize}
\item[$i)$] A $\gamma_0>0$, a constant $C>0$ independent of $\rho$  and $\varepsilon>0$ exist such that for any $\gamma\in[0,\gamma_0]$  such that:
$$ \qquad f(\xi_{\gamma,\varepsilon,x})\le f(x)+C\gamma |\varepsilon|^2.$$
\item[$ii)$] If $ 2 \gamma \|D^2 f\|_\infty\le 1$, then $\forall  \tcb{\rho >0} \, :  \exists \, c_\rho>0 : \forall x \in \R^d:${\small 
\begin{eqnarray*}
\lefteqn{
\gamma^2 D^2 V_p(\xi_{\gamma,\varepsilon,x}) \left(-\nabla f(x)+\varepsilon\right)^{\otimes 2}} \\
&\le&  C  (1+|\varepsilon|^{2(p+1)})\exp(\phi(\gamma |\varepsilon|^2)) \left( \tcb{\rho} \gamma V_{p}(x)+ \gamma^2 V_p(x)+ (c_{\rho}+1)\gamma^{p+1}\right).
\end{eqnarray*}}
\end{itemize}
\end{lemma}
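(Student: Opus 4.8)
The plan is to handle $i)$ and $ii)$ separately: $i)$ is a descent lemma, and $ii)$ is a second-order (Taylor-remainder) estimate for $V_p$ which, via Lemmas \ref{lem0WMR}--\ref{lem2WMR} and Assumption $(\mathbf{H_\phi})$, reduces to a chain of elementary inequalities. For $i)$ I would use the smoothness inequality for the $L$-smooth function $f$ (with $L:=\|D^2 f\|_\infty$), namely $f(\xi_{\gamma,\varepsilon,x}) \le f(x) + \rho\gamma\langle\nabla f(x),-\nabla f(x)+\varepsilon\rangle + \tfrac{L}{2}\rho^2\gamma^2|-\nabla f(x)+\varepsilon|^2$, then expand the square and bound the two inner products by a scaled Young inequality so that the coefficient of $|\nabla f(x)|^2$ becomes $-\tfrac{\rho\gamma}{2}+L\rho^2\gamma^2 \le 0$ for $\gamma \le \gamma_0 := \tfrac{1}{2L}$; all surviving terms are then of the form $(\mathrm{const})\,\gamma|\varepsilon|^2$ with the constant independent of $\rho\in[0,1]$. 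Since $\gamma_0 = \tfrac{1}{2\|D^2 f\|_\infty}$ is exactly the threshold in $ii)$, part $i)$ — including its sharper form $f(\xi_{\gamma,\varepsilon,x})\le f(x)+C\rho\gamma|\varepsilon|^2$, which the same computation gives — is available throughout the proof of $ii)$.

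For $ii)$, write $\xi$ for $\xi_{\gamma,\varepsilon,x}$ and $v := -\nabla f(x)+\varepsilon$. I would first invoke Lemma \ref{lem2WMR} $ii)$, which gives $\gamma^2 D^2 V_p(\xi)v^{\otimes 2} \lesssim \gamma^2\big(V_{p-1}(\xi)+\tfrac{V_p(\xi)}{1+|\nabla f(\xi)|^2}\big)|v|^2$. The key preliminary observation is that $\nabla f(\xi)$ and $\nabla f(x)$ are comparable: since $|\xi-x| = \rho\gamma|v|$ and $2\gamma L\le 1$, $L$-smoothness gives $|\nabla f(\xi)-\nabla f(x)| \le \tfrac12(|\nabla f(x)|+|\varepsilon|)$, whence $|v|^2 \lesssim |\nabla f(\xi)|^2+|\varepsilon|^2$ and $\tfrac{|v|^2}{1+|\nabla f(\xi)|^2}\lesssim 1+|\varepsilon|^2$. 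Using $(\mathbf{H_\phi})$ in the form $|\nabla f(\xi)|^2 \le M f(\xi)$ (so that $V_{p-1}(\xi)|\nabla f(\xi)|^2 \le M V_p(\xi)$), this collapses to $\gamma^2 D^2 V_p(\xi)v^{\otimes 2} \lesssim \gamma^2\big((1+|\varepsilon|^2)V_p(\xi) + |\varepsilon|^2 V_{p-1}(\xi)\big)$.

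Next I would transfer this from $\xi$ to $x$. Part $i)$ gives $f(\xi)\le f(x)+C\gamma|\varepsilon|^2$; combining it with $(a+b)^q\le 2^{q-1}(a^q+b^q)$ and the sub-additivity $\phi(a+b)\le\phi(a)+\phi(b)+c_\phi$ from Lemma \ref{lem0WMR} yields $V_q(\xi)\lesssim e^{\phi(C\gamma|\varepsilon|^2)}\big(V_q(x)+(C\gamma|\varepsilon|^2)^q e^{\phi(f(x))}\big)$ for $q\in\{p-1,p\}$, and the dichotomy according to whether $f(x)<1$ replaces the leftover $e^{\phi(f(x))}$ by $e^{\phi(1)}+V_p(x)$. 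This already produces the announced prefactors $(1+|\varepsilon|^{2(p+1)})$ (absorbing all surviving powers of $|\varepsilon|$) and $\exp(\phi(\gamma|\varepsilon|^2))$ (rescaling $\gamma|\varepsilon|^2$ inside $\phi$ by a fixed constant being harmless, again by Lemma \ref{lem0WMR}). It then remains to route every term into one of the three slots $\rho\gamma V_p(x)$, $\gamma^2 V_p(x)$, $(c_\rho+1)\gamma^{p+1}$: any term carrying $V_p(x)$ together with a power $\gamma^k$, $k\ge 2$, goes to the second slot; any ``pure'' term built from powers of $\gamma|\varepsilon|^2$ goes to the third once one uses $\gamma\le\gamma_0$; and the delicate contribution $\gamma^2|\varepsilon|^2 V_{p-1}(x)$ is split once more according to whether $f(x)<\gamma|\varepsilon|^2$ (then $V_{p-1}(x)<(\gamma|\varepsilon|^2)^{p-1}e^{\phi(\gamma|\varepsilon|^2)}$ gives a $\gamma^{p+1}$-term) or $f(x)\ge\gamma|\varepsilon|^2$ (then $V_{p-1}(x)=V_p(x)/f(x)\le V_p(x)/(\gamma|\varepsilon|^2)$ trades $\gamma|\varepsilon|^2$ for $f(x)$ and returns a $V_p(x)$-term with a better power of $\gamma$). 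Keeping the factor $\rho$ along the way via the $\rho$-dependent form of $i)$ is what makes the $V_p(x)$-contribution appear with a coefficient proportional to $\rho\gamma+\gamma^2$, with $c_\rho$ absorbing the constants that deteriorate as $\rho\downarrow 0$.

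I expect the main obstacle to be precisely this last routing, because of the mismatch near $\theta^\star$: for $p\ge 2$ one has $V_{p-1}(x)\gg V_p(x)$ as $x\to\theta^\star$, so $V_{p-1}$ cannot be absorbed into $V_p$, and it is only the regime-splitting $f(x)<\gamma|\varepsilon|^2$ versus $f(x)\ge\gamma|\varepsilon|^2$ (combined with the extra $\rho$ from $i)$) that makes the Taylor remainder of $V_p$ collapse to exactly the three stated terms — which is what ultimately lets the recursion \eqref{eq:recursionvvp} close at order $\gamma^{p+1}$. Everything upstream of this step is routine, resting only on Lemmas \ref{lem0WMR}--\ref{lem2WMR}, Assumption $(\mathbf{H_\phi})$, and the $L$-smoothness of $f$.
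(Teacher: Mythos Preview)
Your treatment of $i)$ is correct and matches the paper's argument. For $ii)$ the overall skeleton---invoke Lemma~\ref{lem2WMR}$ii)$ at $\xi$, compare $\nabla f(\xi)$ with $\nabla f(x)$ via $L$-smoothness, and transfer $V_q(\xi)$ back to $V_q(x)$ through part $i)$ and Lemma~\ref{lem0WMR}---is also essentially the paper's route. The gap is in the final routing step, and it concerns the meaning of~$\rho$.

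There are two different $\rho$'s in this lemma (the paper overloads the symbol): the $\rho\in[0,1]$ entering $\xi_{\gamma,\varepsilon,x}=x+\rho\gamma(-\nabla f(x)+\varepsilon)$ is a \emph{given} Lagrange-remainder parameter (in the application to Theorem~\ref{thm:sgd} it is some unknown value produced by Taylor's formula), whereas the $\rho$ on the right-hand side of $ii)$ is a \emph{free} parameter, later chosen small so that $C\Sigma_p\rho<\alpha$. Your proposal conflates them: you say the free $\rho$ is ``kept along from the $\rho$-dependent form of $i)$'', but carrying $f(\xi)\le f(x)+C\rho\gamma|\varepsilon|^2$ into the transfer of $V_{p-1}(\xi)$ only attaches $\rho$ to the additive $(C\rho\gamma|\varepsilon|^2)^{p-1}$ piece, not to the dominant $V_{p-1}(x)$ term. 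Your regime split at $f(x)\lessgtr\gamma|\varepsilon|^2$ then yields, in the second regime, $\gamma^2|\varepsilon|^2 V_{p-1}(x)\le\gamma V_p(x)$ with coefficient~$1$, not an arbitrarily small one. Without that freedom the contraction \eqref{eq:recursionvvp} cannot be closed.

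The paper introduces the free $\rho$ by Young's inequality: writing $\gamma^2 f^{p-1}(x)=\bigl(\tilde\rho\,\gamma^{(p-1)/p}f^{p-1}(x)\bigr)\bigl(\tilde\rho^{-1}\gamma^{(p+1)/p}\bigr)$ and applying Young with exponents $\tfrac{p}{p-1},\,p$ gives (after multiplying by $e^{\phi(f(x))}$ and using $V_0\le C(1+V_p)$) the split $\gamma^2 V_{p-1}(x)\le\rho\gamma V_p(x)+c_\rho\gamma^{p+1}$. Your regime-splitting idea can be salvaged in the same spirit---split instead at $f(x)\lessgtr\rho^{-1}\gamma$, which gives $\gamma^2 V_{p-1}(x)\le\rho\gamma V_p(x)+\rho^{-(p-1)}e^{\phi(\rho^{-1}\gamma_0)}\gamma^{p+1}$---but the parameter $\rho$ here is a new free threshold and has nothing to do with the Taylor-remainder $\rho$ of part $i)$.
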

\begin{proof} $C$ is a positive constant whose value may change from line to line.

\noindent $i)$ Using the Taylor formula, a $\tilde{\xi}$ exists on the segment $[x,\xi_{\gamma,\varepsilon,x}]$ such that:
$$ f(\xi_{\gamma,\varepsilon,x})=f(x)-\rho\gamma \|\nabla f(x)\|^2+\rho\gamma \langle \nabla f(x),\varepsilon\rangle+\frac{\rho^2\gamma^2}{2} D^2 f(\tilde{\xi}) \left(-\nabla f(x)+\varepsilon\right)^{\otimes 2}.$$
$\mathbf{H}_{\phi}$ implies that   $D^2f$ is upper bounded  and  $\|a+b\|^2 \leq 2( \|a\|^2+\|b\|^2)$ yields:
$$D^2 f(\tilde{\xi}) \left((-\nabla f(x)+\varepsilon\right)^{\otimes 2}\le 2  \|D^2 f\|_\infty  \left(\|\nabla f(x)\|^2+\|\varepsilon\|^2\right).$$
By the elementary inequality $|\langle u, v\rangle|\le \frac{1}{2}(\|u\|^2+\|v\|^2)$  we deduce that:
{\small 
\begin{eqnarray*}
f(\xi_{\gamma,\varepsilon,x}) & \le & 
f(x)-\rho\gamma \|\nabla f(x)\|^2+\rho\gamma \langle \nabla f(x),\varepsilon\rangle+ C \frac{\rho^2\gamma^2}{2}  \left( \|\nabla f(x)\|^2+\|\varepsilon\|^2\right)  \\
& \le & f(x) + \rho \gamma \left[ \frac{-1}{2}+\rho\gamma  \|D^2 f\|_\infty \right] \|\nabla f(x)\|^2 + \left[ \frac{\rho \gamma}{2}+ \|D^2 f\|_\infty   \rho^2\gamma^2  \right] \|\varepsilon\|^2  \\
& \le & f(x) +  \rho \gamma   \|\varepsilon\|^2 \leq  f(x) +   \gamma   \|\varepsilon\|^2,
\end{eqnarray*}}
where in the last line we use that $\rho \leq 1$ and the condition $\gamma \|D^2 f\|_\infty \leq 1/2$. The result follows by choosing $\gamma_0 \leq C^{-1}$.\smallskip

\noindent $ii)$ We divide the proof into 4 steps.

\noindent $\bullet$ \underline{Step 1: Comparison between $V_r(\xi_{\gamma,\varepsilon,x})$ and $V_r(x)$.} 
Let $r\ge0$. Since $\phi$ is non-decreasing, one first deduces from $i)$ that a constant $C>0$ exists such that:
$$
V_r(\xi_{\gamma,\varepsilon,x})\le (f(x)+C\gamma \|\varepsilon\|^2)^r\exp\left(\phi(f(x)+\gamma \|\varepsilon\|^2)\right).
$$
The sub-additivity property of Lemma \ref{lem0WMR}  associated with $(|a|+|b|)^r \leq 2^r (|a|^r+|b|^r)$ yields:
$$
V_r(\xi_{\gamma,\varepsilon,x})\le 2^r\left( f^r(x)+(C\gamma)^r \|\varepsilon\|^{2r}\right)e^{\phi(f(x))+\phi(\gamma \|\varepsilon\|^2)+c_\phi}.
$$
Setting $\teps=(1+\|\varepsilon\|^{2r})\exp(\phi(\gamma \|\varepsilon\|^2)$,
and using that $V_0 = e^{\phi(f)}$:
{\small \begin{eqnarray} 
\forall r \ge 0 \quad \exists \, C_r>0 \qquad 
V_r(\xi_{\gamma,\varepsilon,x})  & \le & C_r \exp(\phi(\gamma \|\varepsilon\|^2)\left[V_r(x)+\gamma^r \|\varepsilon\|^{2r} V_0(x)\right]\nonumber \\
& \leq & 
C_r \exp(\phi(\gamma \|\varepsilon\|^2)\left[(1+\|\varepsilon\|^{2r}) V_r(x)+ \gamma^r \|\varepsilon\|^{2r}\right]\nonumber\\
& \le &  C_r\teps\left[  V_r(x)+ \gamma^r  \right].\label{djslkjlds}
\end{eqnarray}}
where in the second line, we used that $V_0\le c(1+V_r)$.


\noindent $\bullet$ \underline{Step 2: Upper bound of $D^2 V_p({\xi_{\gamma,\varepsilon,x}})\| .\|\nabla f(x)\|^2$.} 
We apply Lemma \ref{lem2WMR} $ii)$ with $\xi =\xi_{\gamma,\varepsilon,x}$ and we obtain that:
{\small \begin{eqnarray*}
\|D^2 V_p({\xi_{\gamma,\varepsilon,x}})\| .\|\nabla f(x)\|^2 & \leq & C \left( V_{p-1}({\xi_{\gamma,\varepsilon,x}})+\frac{V_p({\xi_{\gamma,\varepsilon,x}})}{1+\|\nabla f({\xi_{\gamma,\varepsilon,x}})\|^2}\right)\|\nabla f(x)\|^2  \\
& \lesssim & \left( T_{\epsilon,\gamma,p-1} [V_{p-1}(x)+\gamma^{p-1}] + \frac{T_{\varepsilon,\gamma,p}[V_p(x)+\gamma^p]}{1+\|\nabla f(\xi_{\gamma,\varepsilon,x})\|^2}
\right)\|\nabla f(x)\|^2 \\
& \lesssim &   T_{\epsilon,\gamma,p-1} V_{p-1}(x) [\|\nabla f(x)\|^2 + \gamma^{p-1}\|\nabla f(x)\|^2] \\
& & + T_{\varepsilon,\gamma,p} \frac{ \|\nabla f(x)\|^2}{1+\|\nabla f(\xi_{\gamma,\varepsilon,x})\|^2}[V_p(x)+\gamma^p].\\
\end{eqnarray*}}
Under Assumption $(\mathbf{H}_{\phi})$, $V_{p-1}(x)\|\nabla f(x)\|^2\le CV_p(x)$ and $\gamma^{p-1} \|\nabla f(x)\|^2 \leq C \gamma^{p-1} f(x) \leq C \gamma^{p-1} (1+V_p(x))$.
From the boundedness of  $\gamma$ and the trivial inequality since $ T_{\epsilon,\gamma,p-1} \leq 2 T_{\epsilon,\gamma,p}$,
we then deduce that
{\small \begin{eqnarray}
\|D^2 V_p({\xi_{\gamma,\varepsilon,x}})\| .\|\nabla f(x)\|^2 & \lesssim &  T_{\epsilon,\gamma,p-1} [V_p(x) + \gamma^{p-1}]+
\frac{T_{\epsilon,\gamma,p} [V_p(x) + \gamma^{p}]  \|\nabla f(x)\|^2}{1+\|\nabla f(\xi_{\gamma,\varepsilon,x})\|^2} \nonumber \\
& \lesssim &  T_{\epsilon,\gamma,p}\left[  [V_p(x) + \gamma^{p-1}]+
  \frac{[V_p(x) + \gamma^{p}]  \|\nabla f(x)\|^2}{1+\|\nabla f(\xi_{\gamma,\varepsilon,x})\|^2} \right],
\label{controlgrad1}
\end{eqnarray}
}
and we are forced to produce an upper bound of  $\frac{\|\nabla f(x)\|^2}{1+|\nabla f(\xi_{\gamma,\varepsilon,x})|^2}.$
According to the Taylor formula, a $\xi'$ exists in $[x,\xi_{\gamma,\varepsilon,x}]$ such that:
$$\nabla f(x)=\nabla f(\xi_{\gamma,\varepsilon,x}) - \rho\gamma D^2f (\xi') \left(-\nabla f(x)+\varepsilon\right),$$
and the triangle inequality yields:
$$\|\nabla f(x)\|\le \|\nabla f(\xi_{\gamma,\varepsilon,x})\|+\| D^2 f\|_\infty \gamma(\|\nabla f(x)\|+\|\varepsilon\|),$$
so that:
$$
\|\nabla f(x)\| \le (1-\|D^2f\|_{\infty} \gamma)^{-1} \left( \|\nabla f(\xi_{\gamma,\varepsilon,x})\|+\|\varepsilon\|\right).
$$

The elementary inequality $(u+v)^2\le 2(u^2+v^2)$ leads to:
$$\|\nabla f(x)\|^2\le 8 \|\nabla f(\xi_{\gamma,\varepsilon,x})\|^2+\|\varepsilon\|^2).$$
As a consequence, for a large enough constant $C$, we have that:
$$\left(\frac{\|\nabla f(x)\|^2}{1+\|\nabla f(\xi)\|^2}+|\varepsilon|^2\right)\le C(1+\|\varepsilon\|^2).$$
Plugging this inequality in \eqref{controlgrad1} yields:
$$
\|D^2 V_p({\xi_{\gamma,\varepsilon,x}})\| .\|\nabla f(x)\|^2\lesssim
 T_{\epsilon,\gamma,p} \left(    \gamma^{p-1} +  [V_p(x)+\gamma^p] (1+\|\varepsilon\|^2)\right),
$$
and since $T_{\varepsilon,\gamma,p} (1+\|\varepsilon\|^2) \leq 3 T_{\varepsilon,\gamma,p+1}$, we then conclude that:
\begin{equation}\label{controlgrad2}
\|D^2 V_p({\xi_{\gamma,\varepsilon,x}})\| .\|\nabla f(x)\|^2\lesssim
 T_{\epsilon,\gamma,p+1} \left(    \gamma^{p-1} +  V_p(x)\right),
\end{equation}

\noindent $\bullet$ \underline{Step 3: Upper bound of $D^2 V_p({\xi_{\gamma,\varepsilon,x}})\| .\|\epsilon\|^2$.} 
We focus on the noise part $\varepsilon$. Using  \eqref{djslkjlds} and Lemma \ref{lem2WMR} $ii)$ once again, we have that:
\begin{equation}\label{controlnoise}
\|D^2 V_p({\xi_{\gamma,\varepsilon,x}})\| .\|\varepsilon\|^2\lesssim T_{\varepsilon,\gamma,p+1}\left( V_{p-1}(x)+V_p(x)+\gamma^{p-1}\right).
\end{equation}

\noindent $\bullet$ \underline{Step 4: Upper bound of $D^2 V_p({\xi_{\gamma,\varepsilon,x}})\| (-\nabla f(x)+\varepsilon)^{\otimes 2}$.} 
We use Equations \eqref{controlgrad2} and \eqref{controlnoise} and obtain:
$$\gamma^2 D^2 V_p({\xi_{\gamma,\varepsilon,x}})(-\nabla f(x)+\varepsilon)^{\otimes 2}\le  CT_{\varepsilon,\gamma,p+1}( \gamma^2 V_{p-1}(x)+\gamma^2 V_p(x)+\gamma^{p+1}).$$
To obtain the result, it is now enough to prove for any $\rho>0$, a constant $c_\rho$ exists such that: 
$$\gamma^2 V_{p-1}(x) \le \rho \gamma V^p(x)+c_{\rho} \gamma^{p+1}.$$
To derive this key comparison, we use the Young inequality
$uv\le \frac{u^{\bar{p}}}{\bar{p}}+\frac{v^{\bar{q}}}{\bar{q}}$
when $1/\bar{p}+1/\bar{q}=1$. In particular, we choose $u=\tilde{\rho}\gamma^{\frac{p-1}{p}}V^{p-1}(x)$, 
$v = \gamma^{1+1/p} \tilde{\rho}^{-1}$, $\bar{p}=p/(p-1)$, $\bar{q}=p$ and obtain that
\begin{eqnarray*}
\gamma^2 V_{p-1}(x)& =& \exp(\phi(\gamma \|\epsilon\|^2)) \gamma^2 f^{p}(x) \\
&\leq &
\exp(\phi(\gamma \|\epsilon\|^2)) \left[ \frac{p-1}{p} \left( \tilde{\rho} \gamma^{(p-1)/p} f^{p-1}(x)\right)^{p/(p-1)} + \frac{\gamma^{p+1} }{p \tilde{\rho}^p} \right]\\
& \leq & \frac{p-1}{p} \tilde{\rho}^{p/(p-1)} \gamma V_p(x) + p^{-1} \tilde{\rho}^{-p} \gamma^{p+1} \exp(\phi(\gamma \|\epsilon\|^2)) \\
& \leq & \frac{p-1}{p} \tilde{\rho}^{p/(p-1)} \gamma V_p(x) + p^{-1} \tilde{\rho}^{-p} \gamma^{p+1} V_0(x) \\
\end{eqnarray*}
Using  $V_0 \leq C (1+V_p)$ once again, we then deduce that for any $\rho>0$, a constant $c_\rho$ exists such that:
$$\gamma^2 V_{p-1}(x) \leq  \rho \gamma V_p(x)+c_{\rho} \gamma^{p+1}.
$$
We obtain the final upper bound:$ \forall \rho>0 \, ,  \exists \,  c_\rho>0 \, , \forall x \in \R^d:$
$$ \gamma^2 D^2 V_p(\xi_{\gamma,\varepsilon,x}) \left(-\nabla f(x)+\varepsilon\right)^{\otimes 2}\le  CT_{\epsilon,\gamma,p+1} \left( \rho \gamma V_{p}(x)+ \gamma^2 V_p(x)+ (c_{\rho}+1)\gamma^{p+1}\right).$$
\end{proof}

\noindent
We now focus on the proof of Theorem \ref{thm:sgd} $i)$.
\begin{proof}[Proof of Theorem \ref{thm:sgd}]

$i)$ We apply the second order Taylor formula to $V_p$ and obtain that:
\begin{eqnarray*}
V_p(\theta_{n+1})&=&V_p(\theta_n)-\gamma_{n+1}\langle \nabla V_p(\theta_n),\nabla f(\theta_n)\rangle+ \gamma_{n+1} \langle V_p(\tn),\Delta M_{n+1}\rangle \\
& &  +\frac{\gamma_{n+1}^2}{2} D^2 V_p(\xi_{n+1})(-\nabla f(\theta_n)+\Delta M_{n+1})^{\otimes 2},\end{eqnarray*}
where $\xi_{n+1}=\theta_n+\rho\Delta \theta_{n+1}$, $\rho\in[0,1]$.
Using Lemma \ref{lem2WMR} $i)$, we obtain that a $\alpha>0$ exists such that: 
\begin{equation}\label{eq:rappelfort}
\forall n \in \mathbb{N}^\star \qquad   V_p(\theta_n)-\gamma_{n+1}\langle \nabla V_p(\theta_n),\nabla f(\theta_n)\rangle \leq V_p(\tn)(1-\alpha \gamma_{n+1}).
\end{equation}
Moreover, we have that $\mathbb{E}[\gamma_{n+1} \langle V_p(\tn),\Delta M_{n+1}\rangle \, \vert \, \mathcal{F}_n] = 0$. Finally, 
Lemma  \ref{lem3WMR} $ii)$ shows that a constant $C>0$ exists such that for any $\rho>0$ , for all $n \in \mathbb{N}^\star$, $c_\rho$ exists such that:

\begin{eqnarray*}
\lefteqn{\frac{\gamma_{n+1}^2}{2} D^2 V_p(\xi_{n+1})(-\nabla f(\theta_n)+\Delta M_{n+1})^{\otimes 2}}\\
&  \leq & C T_{\Delta M_{n+1},\gamma_{n+1},p+1} 
\left( \rho \gamma_{n+1} V_{p}(\tn)+ \gamma_{n+1}^2 V_p(\tn)+ (c_{\rho}+1)\{\gamma_{n+1}\}^{p+1}\right).
\end{eqnarray*}
This last upper bound associated with \eqref{eq:rappelfort} and Assumption $\mathbf{(H_{\Sigma_p}^\phi})$ yields:
{\small 
\begin{eqnarray*}
\lefteqn{\mathbb{E}\left[V_p(\theta_{n+1}) \, \vert \, \mathcal{F}_n \right] }\\
& \leq & (1-\alpha \gamma_{n+1})  V_p(\tn) +\\
& &  C
\left( \rho \gamma_{n+1} V_{p}(\tn)+ \gamma_{n+1}^2 V_p(\tn)+ (c_{\rho}+1)\{\gamma_{n+1}\}^{p+1}\right) \mathbb{E}  \left[ T_{\Delta M_{n+1},\gamma_{n+1},p+1} \, \vert \, \mathcal{F}_n \right] \\
& \leq &  (1-\alpha \gamma_{n+1}) V_p(\tn) +  C \Sigma_p
\left( \rho \gamma_{n+1} V_{p}(\tn)+ \gamma_{n+1}^2 V_p(\tn)+ (c_{\rho}+1)\{\gamma_{n+1}\}^{p+1}\right) \\
& \leq & (1-(\alpha-\rho C \Sigma_p) \gamma_{n+1}+C \Sigma_p \gamma_{n+1}^2)  V_p(\tn) + (1+c_\rho) C \Sigma_p \{\gamma_{n+1}\}^{p+1}.
\end{eqnarray*}
}
We now choose $\rho$ such that $\rho C \Sigma_p=  \frac{\alpha}{2}$ and determine that two non-negative constants $c_1$ and $c_2$ exist such that $\forall n \in \mathbb{N}^\star $:
\begin{equation}\label{eq:recurrence}
\mathbb{E}\left[V_p(\theta_{n+1}) \, \vert \, \mathcal{F}_n \right] \leq 
 \left(1-\frac{\alpha}{2} \gamma_{n+1} + c_1 \gamma_{n+1}^2 \right) V_p(\tn) + c_2 \{\gamma_{n+1}\}^{p+1}.
\end{equation}
\tcr{ Theorem \ref{thm:sgd} $i)$ easily follows by taking the expectation and by using that $c_1\gamma_{n+1}\le \alpha/4$ for $n$ large enough}.\smallskip

\tcr{\noindent $ii)$ We prove by induction that a large enough $
C>0$ exists such that:
\begin{equation}\label{inductionss}
\forall n \in \mathbb{N}^\star \qquad \mathbb{E} \left[ V_p(\tn) \right] \leq C \left\{\gamma_{n}\right\}^{p}.
\end{equation}
Since $\gamma_n-\gamma_{n+1}=o(\gamma_{n+1}^2)$ as $n\rightarrow+\infty$
$$   \left(\frac{\gamma_n}{\gamma_{n+1}}\right)^{p}\le 1+o(\gamma_{n+1})\quad\textnormal{as $n\rightarrow+\infty$},$$
a sufficiently large $n_1$ exists such that
\begin{equation}\label{eq:tecrec}
\forall n \geq n_1 \qquad 0 \leq (1-c_1 \gamma_{n+1}) \left(\frac{\gamma_n}{\gamma_{n+1}}\right)^p \leq 1-\frac{c_1}{2} \gamma_{n+1}.\end{equation}
We can choose $C_1$ large enough such that Equation \eqref{inductionss} holds true for any $n \leq n_1$ with $C \geq C_1$.
For any $n_1\in\mathbb{N}$, the result holds for any $n\le n_1$. Assuming that the property holds at a given rank $n\ge n_1$, we then have:
\begin{eqnarray*}
\ES[V_p(\theta_{n+1})] &\le  & (1-c_1\gamma_{n+1}) \ES[V_p(\theta_n)]+c_2\{\gamma_{n+1}\}^{p+1}.\\
& \leq & (1-c_1\gamma_{n+1}) C\gamma_{n}^p+c_2\{\gamma_{n+1}\}^{p+1}.\\
& \leq & C \{\gamma_{n+1}\}^{p} \left[ \left(\frac{\gamma_n}{\gamma_{n+1}}\right)^{p} (1 - c_1 \gamma_{n+1})+ \frac{c_2}{C}  \gamma_{n+1} \right] \\
& \leq & C \{\gamma_{n+1}\}^{p} \left[   1 - \left(\frac{c_1}{2}-\frac{c_2}{C}\right) \gamma_{n+1}  \right] \\
\end{eqnarray*}
where we used Equation \eqref{eq:recursionvvp}, the induction property \eqref{inductionss} and Inequality  \eqref{eq:tecrec}. If we choose  $C \geq C_2 = \frac{c_2}{2 c_1}$, then $\mathbb{E} [V_p(\tn)] \leq C \gamma_{n}^p \Longrightarrow \mathbb{E} [V_p(\tnp)] \leq C \left\{\gamma_{n+1}\right\}^p$. This ends the proof of $ii)$.}
\end{proof}


\section*{Acknowledgments}
The authors gratefully acknowledge J\'er\^ome Bolte and Gersende Fort for stimulating discussions on the Kurdyka-\L ojasiewicz inequality and averaged stochastic optimization algorithms.
\bibliographystyle{alpha}
\bibliography{averaging_bib}
\appendix
\section{Technical lemmas for  Theorem 2}\label{appendix:A} 
The next lemma is important to obtain the stability of the change of basis from one iteration to another in our spectral analysis of $(\htn)_{n \geq 1}$.
\begin{lemma}\label{lemma:diff_eps} Assume that $\gamma_n=\gamma n^{-\beta}$ with $\beta \in (0,1)$. Let  $\mu>0$.  Then, a constant $C$ and an integer $n_0$ exist  such that
$$
\forall n \geq n_0, \qquad 
\left|\epsilon_{\mu,n}-\epsilon_{\mu,n+1} \right| \leq C n^{\beta-2}
$$
\end{lemma}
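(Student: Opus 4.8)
The plan is to argue directly from the closed form of the eigenvector parameter. Shifting the index $n\mapsto n-1$ in \eqref{eq:enmu} gives $\epsilon_{\mu,n}=\frac{1-\mu\gamma_n}{1-\mu n\gamma_n}$, and for $n$ large enough the denominator is nonzero (see the non-degeneracy condition required in \eqref{eq:enmu}), since $\mu n\gamma_n=\mu\gamma n^{1-\beta}\to+\infty$ because $\beta<1$. Writing $a_n:=1-\mu\gamma_n$ and $b_n:=1-\mu n\gamma_n$, I would reduce the quantity to control to a single fraction,
$$
\epsilon_{\mu,n}-\epsilon_{\mu,n+1}=\frac{a_n}{b_n}-\frac{a_{n+1}}{b_{n+1}}=\frac{a_nb_{n+1}-a_{n+1}b_n}{b_nb_{n+1}},
$$
so that it remains to bound the numerator from above and the denominator from below.

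For the numerator, expanding the two products and collecting terms yields the identity
$$
a_nb_{n+1}-a_{n+1}b_n=\mu n(\gamma_n-\gamma_{n+1})-\mu\gamma_n+\mu^2\gamma_n\gamma_{n+1}.
$$
Each term is then $O(n^{-\beta})$: using $\gamma_n=\gamma n^{-\beta}$ and the mean value theorem (or the convexity of $t\mapsto t^{-\beta}$) one has $0\le\gamma_n-\gamma_{n+1}\le\gamma\beta n^{-\beta-1}$, hence $\mu n(\gamma_n-\gamma_{n+1})\le\mu\gamma\beta n^{-\beta}$; also $\mu\gamma_n=\mu\gamma n^{-\beta}$, and $\mu^2\gamma_n\gamma_{n+1}\le\mu^2\gamma^2 n^{-2\beta}\le\mu^2\gamma^2 n^{-\beta}$ since $\beta>0$. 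Therefore $|a_nb_{n+1}-a_{n+1}b_n|\lesssim n^{-\beta}$.

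For the denominator, since $\mu n\gamma_n=\mu\gamma n^{1-\beta}\to+\infty$ there is $n_0$ such that $\mu\gamma n^{1-\beta}\ge2$ for all $n\ge n_0$; then $|b_n|=\mu\gamma n^{1-\beta}-1\ge\tfrac12\mu\gamma n^{1-\beta}$ and, using $(n+1)^{1-\beta}\ge n^{1-\beta}$, also $|b_{n+1}|\ge\tfrac12\mu\gamma n^{1-\beta}$, so $|b_nb_{n+1}|\ge\tfrac14\mu^2\gamma^2 n^{2-2\beta}$. Combining the two estimates gives $|\epsilon_{\mu,n}-\epsilon_{\mu,n+1}|\lesssim n^{-\beta}/n^{2-2\beta}=n^{\beta-2}$, which is the claim. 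The argument is elementary and I do not expect a genuine obstacle; the only point deserving a little care is the exact cancellation producing the numerator identity above — without it the $O(1)$ and $O(n\gamma_n)$ contributions would survive and the resulting bound would be far too crude.
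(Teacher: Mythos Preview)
Your proof is correct and follows essentially the same approach as the paper: both reduce the difference to a single fraction, expand and simplify the numerator to extract the $O(n^{-\beta})$ bound (your grouping $\mu n(\gamma_n-\gamma_{n+1})-\mu\gamma_n+\mu^2\gamma_n\gamma_{n+1}$ is an algebraic rearrangement of the paper's $\mu[(\gamma_{n+1}-\gamma_n)+(n\gamma_n-(n+1)\gamma_{n+1})+\mu\gamma_n\gamma_{n+1}]$), and lower-bound the denominator by $\sim n^{2-2\beta}$ using $\mu n\gamma_n\to+\infty$.
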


\begin{proof}
We choose $n_0$ such  that $1-\mu\gamma_{n}n<0$ for all $n\ge n_0$.
Then, the desired inequality comes from a direct computation:
\begin{eqnarray*}
\epsilon_{\mu,n}-\epsilon_{\mu,n+1} &= &\frac{1-\mu \gamma_{n}}{1-\mu \gamma_{n} n} - \frac{1-\mu \gamma_{n+1}}{1-\mu \gamma_{n+1} (n+1)} \\
& = & \frac{(1-\mu \gamma_{n})(1-\mu \gamma_{n+1} (n+1)) -(1-\mu \gamma_{n+1})(1-\mu \gamma_{n} n) }{(1-\mu \gamma_{n} n)(1-\mu \gamma_{n+1} (n+1))} \\
& = & \mu \frac{ (\gamma_{n+1}-\gamma_n) + (n  \gamma_n - (n+1) \gamma_{n+1})+\mu \gamma_n \gamma_{n+1} }{(1-\mu \gamma_{n} n)(1-\mu \gamma_{n+1} (n+1))}
\end{eqnarray*}
Now, if $C$ denotes a constant that only depends on $\mu$ and $\beta$ (whose value may change from line to line), we then have the following inequalities:
$$|\gamma_{n+1}-\gamma_n| \leq C n^{-(1+\beta)}, \; \left| n  \gamma_n - (n+1) \gamma_{n+1} \right| \leq C n^{-\beta} \; \text{and} \; 
\gamma_n \gamma_{n+1} \leq C n^{-2\beta}.
$$
 Since $\beta<1$, the denominator is equivalent to $n^{2-2\beta}$ and we obtain that
\begin{equation}\label{eq:contdifepsss}
\left| \epsilon_{\mu,n}-\epsilon_{\mu,n+1} \right| \leq C \frac{n^{-\beta}}{n^{2-2\beta}} = C n^{\beta-2},
\end{equation}
which ends the proof.
\end{proof}

\begin{lemma} \label{lem:recursions} Under the assumptions of Proposition \ref{prop:mainproof}, we have:

$i)$ For any $i \in\{1,\ldots,d\}$,   $\omega_n(i)= \ES[(\tZn^{(1)})_i(\tZn^{(2)})_i]$ satisfies $\forall n\ge n_0$,
$$\omega_{n+1}(i)=\left(1-\gamma_{n+1}\mu_i^\star\right)\frac{n}{n+1} \omega_n(i)+O\left(\frac{\gamma_{n+1}}{n+1}\right)+O(\gamma_{n+1}\ES[|\theta_n|^2|\tZn^{(2)}|]).$$
$ii)$ The following recursion holds  for any $n\ge n_0$,
\begin{align*}
\ES[|\tZnp^{(2)}|^2]&=\left(1-\frac{1}{n+1}\right)^2\ES[|\tZn^{(2)}|^2]+\sum_{i=1}^d \alpha_n^i\omega_n(i)+\frac{\ES[|\theta_n|^2|\tZn^{(2)}|]}{n}\\
&+\frac{{\rm Tr}(\Sigma^\star)}{(n+1)^2}
+\tcb{O\left(\frac{\sqrt{\gamma_n}}{n^2}\vee \frac{1}{n^{3}\gamma_n}\right)},
\end{align*}

where $\alpha_n^i$ is defined in \eqref{eq:alphani} and satisfies $|\alpha_n^i|\lesssim \gamma_n^{-1}n^{-2}$, $i=1,\ldots,d$.
\end{lemma}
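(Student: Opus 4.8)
The plan is to expand the coordinatewise form of the recursion \eqref{tznpeq} and compute the required first and second moments, using at each step that $\Delta M_{n+1}$ is centered given $\mathcal{F}_n$. Because $D^\star$ and the matrices $\mathcal{E}_{n,D^\star}$, $\Omega_n$, $\Upsilon_n$ are all diagonal, \eqref{tznpeq} reads, for each $i\in\{1,\dots,d\}$,
\begin{align*}
(\tZnp^{(1)})_i &= (1-\gamma_{n+1}\mu_i^\star)(\tZn^{(1)})_i+\gamma_{n+1}\bigl((Q\Delta M_{n+1})_i+\rho_n^{i}\bigr),\\
(\tZnp^{(2)})_i &= \Bigl(1-\tfrac1{n+1}\Bigr)(\tZn^{(2)})_i+\{\Omega_n\}_{i,i}(\tZn^{(1)})_i+\gamma_{n+1}\{\Upsilon_n\}_{i,i}\bigl((Q\Delta M_{n+1})_i+\rho_n^{i}\bigr),
\end{align*}
where $\rho_n^{i}$ is $\mathcal{F}_n$-measurable with $|\rho_n^{i}|=O(|\theta_n|^2)$, arising from $\|\Lambda^\star-\Lambda_n\|=O(|\theta_n|)$ and hence $\|(\Lambda^\star-\Lambda_n)\theta_n\|=O(|\theta_n|^2)$. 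I would first record the two elementary estimates $|\{\Omega_n\}_{i,i}|\lesssim n^{\beta-2}$ (it equals $(\epsilon_{\mu_i^\star,n+1}-\epsilon_{\mu_i^\star,n+2})(1-\gamma_{n+1}\mu_i^\star)$, so Lemma \ref{lemma:diff_eps} applies) and $|\{\Upsilon_n\}_{i,i}|\lesssim n^{\beta-1}$ (directly from \eqref{eq:enmu}), and fix $n_0$ so that $1-\mu\gamma_n n<0$ for all $n\ge n_0$ and all $\mu\in\mathrm{Sp}(\Lambda^\star)$.

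For $i)$, I would multiply the two lines, take expectations, and drop every summand carrying a single factor $Q\Delta M_{n+1}$ (conditionally centered). The leading surviving term is $(1-\gamma_{n+1}\mu_i^\star)\tfrac{n}{n+1}\omega_n(i)$. The remaining purely ``deterministic'' products are $(1-\gamma_{n+1}\mu_i^\star)\{\Omega_n\}_{i,i}\,\mathbb{E}[((\tZn^{(1)})_i)^2]=O(n^{\beta-2}\gamma_n)=O(\gamma_{n+1}/(n+1))$ and $\gamma_{n+1}^2\{\Upsilon_n\}_{i,i}\,\mathbb{E}[(Q\Delta M_{n+1})_i^2]=O(\gamma_n^2 n^{\beta-1})=O(\gamma_{n+1}/(n+1))$, using $\mathbf{(H_S)}$ for the conditional second moment. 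Every cross term that pairs $\rho_n^{i}$ with $(\tZn^{(1)})_i$ or with $Q\Delta M_{n+1}$ is controlled by Cauchy--Schwarz and the $(L^4,\sqrt{\gamma_n})$-consistency (e.g. $\mathbb{E}[|\tZn^{(1)}|\,|\theta_n|^2]\le\sqrt{\mathbb{E}|\tZn^{(1)}|^2}\sqrt{\mathbb{E}|\theta_n|^4}\lesssim\gamma_n^{3/2}$), so it falls into $O(\gamma_{n+1}/(n+1))$. The only term not absorbed this way is $\gamma_{n+1}(1-\tfrac1{n+1})\mathbb{E}[\rho_n^{i}(\tZn^{(2)})_i]$, which is precisely the claimed remainder $O(\gamma_{n+1}\mathbb{E}[|\theta_n|^2|\tZn^{(2)}|])$.

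For $ii)$, I would square the second line, sum over $i$, and take expectations. This produces $(1-\tfrac1{n+1})^2\mathbb{E}[|\tZn^{(2)}|^2]$, the cross term $\sum_i\alpha_n^i\omega_n(i)$ with $\alpha_n^i$ as in \eqref{eq:alphani} (and $|\alpha_n^i|\lesssim n^{\beta-2}=\gamma_n^{-1}n^{-2}$ again by Lemma \ref{lemma:diff_eps}), the cross term of the first summand with $\rho_n^{i}$, which is $O(\gamma_{n+1}n^{\beta-1}\mathbb{E}[|\theta_n|^2|\tZn^{(2)}|])=O(n^{-1}\mathbb{E}[|\theta_n|^2|\tZn^{(2)}|])$, and the variance term coming from $\sum_i\gamma_{n+1}^2\{\Upsilon_n\}_{i,i}^2(Q\Delta M_{n+1})_i^2$, whose conditional expectation is $\gamma_{n+1}^2\{\Upsilon_n\}_{i,i}^2(QS(\theta_n)Q^\top)_{i,i}$. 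The key computation is the second-order expansion $\gamma_{n+1}\{\Upsilon_n\}_{i,i}=-\tfrac1{\mu_i^\star(n+1)}\bigl(1+O(n^{\beta-1})\bigr)$ from \eqref{eq:enmu}, combined with $\|\mathbb{E}[S(\theta_n)]-S^\star\|\lesssim\mathbb{E}|\theta_n|\lesssim\sqrt{\gamma_n}$ (from $\mathbf{(H_S)}$ and the $(L^2,\sqrt{\gamma_n})$-consistency), which yields
$$
\sum_{i=1}^d\gamma_{n+1}^2\{\Upsilon_n\}_{i,i}^2\,\mathbb{E}\bigl[(QS(\theta_n)Q^\top)_{i,i}\bigr]=\frac1{(n+1)^2}\sum_{i=1}^d\frac{(QS^\star Q^\top)_{i,i}}{(\mu_i^\star)^2}+O\!\left(\frac{\sqrt{\gamma_n}}{n^2}\vee\frac1{n^3\gamma_n}\right).
$$
Since $\Sigma^\star=Q^\top(D^\star)^{-1}QS^\star Q^\top(D^\star)^{-1}Q$ by \eqref{def:Sigma2} and \eqref{eq:defQ}, the sum on the right equals $\mathrm{Tr}(\Sigma^\star)/(n+1)^2$, and the leftover terms quadratic in $\{\Omega_n\}_{i,i}$ (of order $n^{\beta-4}$) or in $\rho_n^{i}$ (of order $n^{-2-2\beta}$) are dominated by the displayed remainder. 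The main obstacle is this last expansion of $\gamma_{n+1}\{\Upsilon_n\}_{i,i}$: one must expand $\epsilon_{\mu_i^\star,n+2}=(1-\mu_i^\star\gamma_{n+2})/(1-\mu_i^\star\gamma_{n+2}(n+2))$ carefully, using $\gamma_{n+1}/\gamma_{n+2}=1+O(1/n)$ and $1/(\gamma_{n+2}(n+2))=O(n^{\beta-1})$, and cancel the $1/(n+1)$ contributed by $\Upsilon_n$ so as to isolate the exact constant $-1/(\mu_i^\star(n+1))$ with a relative error only $O(n^{\beta-1})$ — this is where $\beta\in(1/2,1)$ enters. The rest is bookkeeping: deciding, for each of the many error terms, whether it lands in $O(\gamma_{n+1}/(n+1))$, in $O(\sqrt{\gamma_n}/n^2)$, or in $O(1/(n^3\gamma_n))$, and invoking the Lipschitz continuity of $S$ to replace $S(\theta_n)$ by $S^\star$.
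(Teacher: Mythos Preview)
Your proposal is correct and follows essentially the same route as the paper's proof: expand the recursion \eqref{tznpeq} coordinate by coordinate, kill the conditionally centered martingale terms, bound the cross terms via Cauchy--Schwarz and the $(L^4,\sqrt{\gamma_n})$-consistency, and isolate the leading variance term through the expansion $\gamma_{n+1}\{\Upsilon_n\}_{i,i}=-\tfrac{1}{\mu_i^\star(n+1)}(1+O(n^{\beta-1}))$ combined with the Lipschitz property of $S$. The only stylistic difference is that the paper packages the martingale variance via the matrix $B_n=Q^T\Upsilon_n^2 Q$ and computes $\mathrm{Tr}(B_nS(\theta_n))$, whereas you stay with the diagonal entries $(QS^\star Q^\top)_{i,i}/(\mu_i^\star)^2$; the trace identity you invoke is exactly the paper's $\mathrm{Tr}(\{\Lambda^\star\}^{-2}S^\star)=\mathrm{Tr}(\Sigma^\star)$.
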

\begin{proof}
Set $\underline{\mu}=\min\{\mu_i^\star, i=1,\ldots,d\}>0$. Recall that $n_0\in\mathbb{N}$ is such that $1-\underline{\mu}\gamma_{n}n<0$ for all $n\ge n_0$. For all $n\ge n_0$,  $\Upsilon_n$ and $\Omega_n$ are  well-defined deterministic matrices 
and since for a given $\mu>0$,  $\epsilon_{\mu,n}\sim (n\gamma_n)^{-1}$ and $\epsilon_{\mu,n}-\epsilon_{\mu,n+1}=O(n^{-2}\gamma_n^{-1})$ (see \eqref{eq:contdifepsss}),
we have 
\begin{equation}\label{contnormmatrices}
\gamma_{n+1}\|\Upsilon_n\|=O\left(\frac{1}{n}\right)\quad \textnormal{and}\quad \gamma_{n+1}\|\Omega_n\|=O\left(\frac{1}{n^2}\right).
\end{equation}
Now, let us prove the first statement.\smallskip

\noindent $i)$ Using \eqref{tznpeq}, we have
\begin{align*}
\omega_{n+1}(i)&=\left(1-\gamma_{n+1}\mu_i^\star\right)\left(1-\frac{1}{n+1}\right) \omega_n(i)+O(\gamma_{n+1}\ES[|\theta_n|^2|\tZn^{(2)}|])\\
&+\gamma_{n+1}^2\ES[\tcb{ \left\{ Q \Delta M_{n+1} \right\}_{i}} \tcb{  \left\{ \Upsilon_n Q\Delta M_{n+1} \right\}_i}]+O(\gamma_{n+1} r_n^{(1)})
\end{align*}
where 
$$r_n^{(1)}=\|\Omega_n\|\left(\frac{\ES|\tZn^{(1)}|^2}{\gamma_{n+1}}+ \ES[|\theta_n|^2|\tZn^{(1)}|]\right)+
\|\Upsilon_n\|\left(\ES[ | \tZn^{(1)}|.|\theta_n|^2]+ \gamma_{n+1}\ES|\theta_n|^4\right).$$
The Cauchy-Schwarz inequality, the fact that $|\tZn^{(1)}|=|\theta_n|$ and the consistency condition lead to
$$
\ES[|\theta_n|^2|\tZn^{(1)}|] \leq \left\{ \ES[|\theta_n|^4|\right\}^{1/2} \left\{\ES[|\tZn^{(1)}|^2]\right\}^{1/2} \leq \gamma_{n+1}^{3/2}.
$$
Therefore, \eqref{contnormmatrices} yields:
$$\gamma_{n+1} r_n^{(1)}\lesssim \frac{1}{n^2}\left(1+\gamma_{n+1}^{\frac{3}{2}}\right)+
\frac{1}{n}\left(\gamma_{n+1}^{\frac{3}{2}}+\gamma_{n+1}^3\right)=o\left(\frac{\gamma_n}{n}\right).$$
In the meantime,  under $\mathbf{(H_S)}$ and because $Q \in O_d(\R)$, we have: 
\begin{eqnarray*}
\forall i \in \{1,\ldots,d\} \qquad 
\left| \ES[\left\{ Q \Delta M_{n+1} \right\}_{i} \left\{ \Upsilon_n Q\Delta M_{n+1} \right\}_i] \right| & \lesssim & \|\Upsilon_n\|\ES[|\Delta M_{n+1}|^2] \\
& \lesssim & \|\Upsilon_n\| \ES\|S(\theta_n)\| \\
& \lesssim & \|\Upsilon_n\| (1+\ES|\theta_n|)\\
& \lesssim &  \|\Upsilon_n\| .
\end{eqnarray*}
We therefore deduce from \eqref{contnormmatrices} and from the previous lines  that
$$ \forall i \in \{1,\ldots,d\} \qquad 
\gamma_{n+1}^2  \left| \ES[\left\{ Q \Delta M_{n+1} \right\}_{i} \left\{ \Upsilon_n Q\Delta M_{n+1} \right\}_i] \right|  \lesssim \frac{\gamma_n}{n}.$$

$ii)$ We define $\Delta  N_{n+1}= \Upsilon_n Q \Delta M_{n+1}$ and recall that
$\alpha_n^i$ is defined in \eqref{eq:alphani} by $\alpha_n^i= 2(1-(n+1)^{-1})(\Omega_{n})_{i,i}$. Starting from \eqref{tznpeq} and $|\tZn^{(1)}|=|\theta_n|$, we use  that $\Omega_n$ is a diagonal matrix so that
\begin{align*}
\ES[|\tZnp^{(2)}|^2]&=\left(1-\frac{1}{n+1}\right)^2\ES[|\tZn^{(2)}|^2]+\sum_{i=1}^d \alpha_n^i\omega_n(i)+\gamma_{n+1}^2\ES|\Delta N_{n+1}|^2\\
&+O\left(\gamma_{n+1}\|\Upsilon_n\|\ES[|\theta_n|^2|\tZn^{(2)}|]\right)+O(\gamma_{n+1} r_n^{(2)}),
\end{align*}
where $r_n^{(2)}$ is defined by
$$r_n^{(2)}=\frac{\|\Omega_n\|^2\ES|\theta_n|^2}{\gamma_{n+1}}+ \|\Omega_n\|\|\Upsilon_n\| \ES|\theta_n|^3+\gamma_{n+1}\|\Upsilon_n\|^2\ES|\theta_n|^4.$$
The $(L^4,\sqrt{\gamma_n})$-consistency, the Jensen inequality and \eqref{contnormmatrices} yield
$$ \gamma_{n+1} r_n^{(2)}=O\left(\frac{1}{\gamma_n n^4}+\frac{\sqrt{\gamma_n}}{n^3}+\frac{\gamma_n^2}{n^2}\right)=O\left(\frac{1}{n^3}\right)$$
since $\gamma_n\le c n^{-\frac{1}{2}}$.
To achieve the proof, it remains to show that 
\begin{equation}\label{objectiffinpreuve}
\ES|\Delta N_{n+1}|^2=\frac{{\rm Tr}(\Sigma^\star)}{n^{\tcb{2}}}\tcb{+O\left(\frac{\sqrt{\gamma_n}}{n^2}\vee \frac{1}{n^{3}\gamma_n}\right)}
\end{equation}
First, set $B_n=Q^T \Upsilon_n^2 Q$. Using that $\Upsilon_n$ is a diagonal matrix, we have 
 
\begin{align*}
|\Delta N_{n+1}|^2= {\rm Tr}(|\Delta N_{n+1}|^2) &= {\rm Tr}(\Delta N_{n+1}^T \Delta N_{n+1})\\
&= {\rm Tr}(\Delta M_{n+1}^T  B_n \Delta M_{n+1} )\\
&= {\rm Tr}( B_n \Delta M_{n+1}\Delta M_{n+1}^T  )\\
\end{align*}
Since the trace  is a linear application and $B_n$ is a deterministic matrix, 
\begin{equation}\label{dkjljsl}
\ES[|\Delta N_{n+1}\}|^2{|\cal F}_n]= {\rm Tr}( B_n \ES[\Delta M_{n+1}\Delta M_{n+1}^T|{\cal F}_n])={\rm Tr}( B_n S(\theta_n))
\end{equation}
where we applied Assumption $\mathbf{(H_S)}$. We also have $S(\theta_n)=S(\theta^\star)+O(|\theta_n|)$. For $B_n$, we first remark that 
$$\gamma_{n+1}\Upsilon_n= (n+1)^{-1} \{D^\star\}^{-1}+\Delta_{n+1} $$
where $(\Delta_{n})_{n \geq 0}$ is a sequence of matrices defined by:
$$\Delta_{n} = {\rm Diag}\left\{\frac{1-(n+1)\{\mu_i^\star\}^2\gamma_{n+1}^2}{(n+1) \mu_i^{\star}((n+1) \gamma_{n+1} \mu_i^\star-1)} + \frac{\gamma_{n+1}}{n+1}, \,i=1,\ldots,d\right\}.$$ 
Using that $n\gamma_n^2\rightarrow0$ as $n\rightarrow+\infty$, one easily checks that
$$\|\Delta_n\|\lesssim \frac{1}{n^2\gamma_n}+\frac{\gamma_n}{n}\lesssim \frac{1}{n^2\gamma_n}.$$
As a consequence, 
\begin{eqnarray*}
\gamma_{n+1}^2 B_n & = & Q^T \left\{ \gamma_{n+1} \Upsilon_n\right\}^2 Q \\
& =&  Q^T   \{ (n+1)^{-1} D^\star + \Delta_{n+1}\}^{2} Q \\
 & = & (n+1)^{-2} Q^T \{D^\star\}^{-2} Q + O\left(\frac{1}{n^{\tcb{3}}\gamma_n}\right).
\end{eqnarray*}

It follows from \eqref{dkjljsl} that 
\begin{eqnarray*}
\gamma_{n+1}^2\ES[|\Delta N_{n+1}\}|^2{|\cal F}_n] & =&
\gamma_{n+1}^2 {\rm Tr}( B_n \Delta M_{n+1}\Delta M_{n+1}^T  ) \\
& = & 
\frac{{\rm Tr}(\{ \Lambda^\star\}^{-2} S(\theta^\star))}{(n+1)^2}+O\left(\frac{\ES|\theta_n|}{n^2}\vee \frac{1}{n^{3}\gamma_n}\right)\\
& = & \frac{{\rm Tr}(\{ \Lambda^\star\}^{-2} S(\theta^\star))}{(n+1)^2}+O\left(\frac{\sqrt{\gamma_n}}{n^2}\vee \frac{1}{n^{3}\gamma_n}\right)\\
\end{eqnarray*}
because 
which leads to \eqref{objectiffinpreuve} and achieves the proof.
\end{proof}

\begin{lemma}\label{lem:contechnique2} 
Assume that $(u_n)_{n\ge0}$ is a sequence which satisfies for all $n\ge n_0$ and for a given $\mu>0$:
$$u_{n+1}=\left(1-\gamma_{n+1}\mu\right)\frac{n}{n+1} u_n+\beta_{n+1}$$
with $\beta_n\lesssim \gamma_{n}{n^{-1}}$. Then, 
$u_n=O(n^{-1}).$
\end{lemma}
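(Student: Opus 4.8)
The plan is to reduce the recursion to the elementary form $v_{n+1}=(1-\mu\gamma_{n+1})v_n+\tilde\beta_{n+1}$ by absorbing the awkward factor $\frac{n}{n+1}$ into the unknown. Concretely, I would set $v_n:=n u_n$. Multiplying the recursion by $n+1$ gives, for every $n\ge n_0$,
$$
v_{n+1}=(1-\mu\gamma_{n+1})\,v_n+(n+1)\beta_{n+1},
$$
and the hypothesis $\beta_n\lesssim \gamma_n/n$ yields $|(n+1)\beta_{n+1}|\le C\gamma_{n+1}$ for a constant $C$ independent of $n$. It therefore suffices to prove that $(v_n)_{n\ge n_0}$ is bounded, since this immediately gives $u_n=v_n/n=O(n^{-1})$.

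Since $\gamma_n\to 0$, I would first enlarge $n_0$ if necessary so that $0<\mu\gamma_n<1$ for all $n\ge n_0$, hence $0\le 1-\mu\gamma_{n+1}<1$ on the relevant range. Then I would prove by induction that $|v_n|\le M$ for all $n\ge n_0$, where $M:=\max\bigl(|v_{n_0}|,\,C/\mu\bigr)$. The base case is trivial. For the inductive step, assuming $|v_n|\le M$, one has
$$
|v_{n+1}|\le (1-\mu\gamma_{n+1})|v_n|+C\gamma_{n+1}\le (1-\mu\gamma_{n+1})M+C\gamma_{n+1}=M-\gamma_{n+1}(\mu M-C)\le M,
$$
because $\mu M\ge C$ by the choice of $M$. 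Consequently $|u_n|\le M/n$ for all $n\ge n_0$, which is exactly the claim. (An alternative route is to iterate $|v_{n+k}|-C/\mu\le \bigl(\prod_{j=1}^{k}(1-\mu\gamma_{n+j})\bigr)(|v_n|-C/\mu)$ and use $\sum_n\gamma_n=+\infty\Rightarrow\prod_j(1-\mu\gamma_{n+j})\to 0$ to conclude $\limsup_n|v_n|\le C/\mu$; the monotone induction above is shorter and self-contained.)

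I do not expect a genuine obstacle here: the only point demanding a little care is ensuring that the multiplicative coefficient $1-\mu\gamma_{n+1}$ is nonnegative and strictly below $1$ on the range considered, which is guaranteed after enlarging $n_0$ thanks to $\gamma_n\to 0$. Everything else is a routine discrete Gr\"onwall / stability estimate, and the substitution $v_n=n u_n$ is the only idea needed.
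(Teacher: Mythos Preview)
Your proof is correct. The substitution $v_n=nu_n$ is exactly the right move: it absorbs the telescoping factor $\frac{n}{n+1}$ and reduces the problem to a standard stability recursion $v_{n+1}=(1-\mu\gamma_{n+1})v_n+O(\gamma_{n+1})$, which you then close by a one-line induction with the absorbing barrier $M=\max(|v_{n_0}|,C/\mu)$. The only point of care, namely ensuring $1-\mu\gamma_{n+1}\in[0,1)$, you handle correctly by enlarging $n_0$; since the conclusion is asymptotic, the finitely many discarded terms are harmless.

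The paper takes a different and heavier route: it unrolls the recursion explicitly to get
\[
u_n=\Bigl(\prod_{k=n_0+1}^n(1-\gamma_k\mu)\tfrac{k}{k+1}\Bigr)u_{n_0}+\sum_{k=n_0+1}^n \beta_k \prod_{\ell=k+1}^n(1-\gamma_\ell \mu)\tfrac{\ell}{\ell+1},
\]
then bounds the products via $\log(1+x)\le x$ (giving the factor $e^{-\mu(\Gamma_n-\Gamma_k)}$) together with the telescoping $\prod\frac{k}{k+1}$, and finally controls the sum by an integral comparison $\sum\gamma_k e^{\mu\Gamma_k}\le\int e^{\mu x}dx$. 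Your argument is shorter and more robust: it never needs $\sum\gamma_n=+\infty$ nor the integral estimate, and it yields an explicit uniform bound $|nu_n|\le M$ rather than a growth estimate. The paper's approach, on the other hand, keeps the decay factor $e^{-\mu\Gamma_n}$ visible, which is what allows the authors (in the remark following the lemma) to also derive the lower bound $\liminf_n nu_n>0$ under $\sum\gamma_k^2<\infty$; your induction gives only the upper bound, though the alternative product iteration you sketch at the end would recover the two-sided estimate as well.
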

\begin{proof}  With the convention $\prod_\emptyset=1$ and $\sum_\emptyset=0$, we have for every $n\ge n_0$:
$$u_n=\left(\prod_{k=n_0+1}^n(1-\gamma_k\mu)\frac{k}{k+1}\right)u_{n_0}+\sum_{k=n_0+1}^n \beta_k \prod_{\ell=k+1}^n(1-\gamma_\ell \mu)\frac{\ell}{\ell+1}.$$
Using that for any $x>-1$, $\log(1+x)\le x$, we obtain for every $n\ge n_0+1$
$$\prod_{k=n_0+1}^n(1-\gamma_k\mu)\frac{k}{k+1}\le \frac{n_0}{n+1} e^{-\mu(\Gamma_n-\Gamma_{n_0})}\le C_{n_0} \frac{e^{-\Gamma_n}}{n+1}=O(n^{-1})$$
and,
$$\sum_{k=n_0+1}^n \beta_k \prod_{\ell=k+1}^n(1-\gamma_\ell \mu)\frac{\ell}{\ell+1}\le \frac{1}{n+1}\left(e^{-\mu\Gamma_n}\sum_{k=n_0+1}^n\beta_k(k+1)e^{\mu\Gamma_k}\right).$$
But $\beta_k(k+1)\lesssim\gamma_{k+1}$. Thus, since $x\mapsto x e^{\mu x}$ is increasing on $\ER_+$,
$$\sum_{k=n_0+1}^n\beta_k(k+1)e^{\mu\Gamma_k}\lesssim \sum_{k=n_0+1}^n\gamma_{k+1} e^{\mu\Gamma_k}\le \int_{\Gamma_{n_0+1}}^{\Gamma_{n+1}} e^{\mu x} dx$$
and hence, 
$$\frac{1}{n+1}\left(e^{-\mu\Gamma_n}\sum_{k=n_0+1}^n\beta_k(k+1)e^{\mu\Gamma_k}\right)\le \frac{C_{n_0}}{n+1}.$$
The result follows.
\end{proof}
\begin{rem} \label{rem1} By the expansion $\log(1+x)=x+c(x)x^2$ where $c$ is bounded on $[-1/2,1/2]$, a slight modification of the proof leads to $\liminf_{n\rightarrow+\infty} n u_n>0$ when $\sum\gamma_k^2<+\infty$.
\end{rem}

\begin{lemma}\label{lem:tecnique-clef}
For any sequence $(u_n)_{n \geq 0}$ that satisfies
$$
\forall n \geq 0 \qquad u_{n+1} \leq u_n \left( 1-\frac{1}{n+1}\right)^2 (1+2 n^{-r})+\frac{V}{(n+1)^2}+ \bar{c} n^{-q},
$$
with $r \geq 1$ and $q \geq 2$, then a large enough constant $C$ independent of $n$ exists such that
$$
\forall n \geq 1 \qquad  u_n \leq \frac{V}{n} + C n^{- \{r \wedge (q-1)\}}.
$$
\end{lemma}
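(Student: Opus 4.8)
The plan is to run a single induction on $n$, organised around the elementary identity
$$
\left(1-\frac{1}{n+1}\right)^2\frac{V}{n}+\frac{V}{(n+1)^2}=\frac{Vn+V}{(n+1)^2}=\frac{V}{n+1},
$$
which says that $n\mapsto V/n$ is an \emph{exact} fixed point of the unperturbed part of the recursion, so only the remainder $u_n-V/n$ has to be controlled. I argue in the regime $r>1$ and $s:=r\wedge(q-1)<2$, which covers every application of the lemma (when $r\wedge(q-1)\ge 2$ the exponent in the conclusion should be read as $2$). A preliminary a priori bound is convenient: writing $a_n=\left(1-\frac1{n+1}\right)^2(1+2n^{-r})\ge 0$ and $b_n=\frac{V}{(n+1)^2}+\bar{c}n^{-q}\ge 0$, the hypothesis reads $u_{n+1}\le a_nu_n+b_n$, and since $2n^{-r}\le 2n^{-1}$ together with $(1+2x)/(1+x)^2\le 1$ gives $a_n=\frac{n^2}{(n+1)^2}(1+2n^{-r})\le\frac{n(n+2)}{(n+1)^2}<1$ for every $n\ge 1$, one gets $u_{n+1}^+\le u_n^++b_n$ (with $x^+:=\max(x,0)$) and hence $u_n\le u_n^+\le u_1^++\sum_{k\ge1}b_k=:K<\infty$ (the series converges because $q>1$).

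The main step is to prove by induction that $u_n\le\frac{V}{n}+\frac{C}{n^{s}}$ for a constant $C$ chosen below. Feeding this into the recursion and using the fixed-point identity, one reduces (after multiplying by $(n+1)^2$) to checking
$$
C\big[(n+1)^{2-s}-n^{2-s}\big]\ \ge\ 2Vn^{1-r}+2Cn^{2-s-r}+\bar{c}n^{-q}(n+1)^2 .
$$
On the left, $(n+1)^{2-s}-n^{2-s}\ge(2-s)(n+1)^{1-s}\ge\frac{2-s}{2}\,n^{1-s}$. On the right, $r\ge s$ gives $2Vn^{1-r}\le 2Vn^{1-s}$; $q\ge s+1$ gives $\bar{c}n^{-q}(n+1)^2\le 4\bar{c}n^{1-s}$; and $2Cn^{2-s-r}=2Cn^{1-s}\,n^{1-r}\le\frac{C(2-s)}{4}n^{1-s}$ as soon as $n\ge n_1$, where $n_1$ depends only on $r,s$ (this is where $r>1$ enters). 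The inequality then holds provided $\frac{C(2-s)}{2}\ge 2V+4\bar{c}+\frac{C(2-s)}{4}$, i.e. $C\ge\frac{8V+16\bar{c}}{2-s}$. Choosing $C=\max\big\{\frac{8V+16\bar{c}}{2-s},\,Kn_1^{s}\big\}$, the base cases $1\le n\le n_1$ hold because $u_n\le K\le C/n_1^{s}\le V/n+C/n^{s}$, and the inductive step closes for $n\ge n_1$, which yields $u_n\le V/n+Cn^{-(r\wedge(q-1))}$.

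The only delicate point is the constant-chasing in the induction step. The ``contraction gain'' that $(1-\frac1{n+1})^2$ extracts from the correction $Cn^{-s}$ is of order $Cn^{-s-1}$, i.e. exactly one power sharper than $Cn^{-s}$ and no more; but the external forcing $\bar{c}n^{-q}$ sits at that very order $n^{-s-1}$ when $q-1=s$, and the cross term $2Vn^{-r}\cdot n^{-1}$ produced by the multiplicative perturbation $1+2n^{-r}$ hitting the leading term $V/n$ sits there when $r=s$. So the three contributions live on the same scale and the gain wins only on constants: it carries the adjustable factor $C(2-s)>0$, which beats the fixed quantities $2V$ and $4\bar{c}$ once $C$ is large, and beats the self-referential term $2Cn^{2-s-r}$ precisely because $r>1$ makes it genuinely lower order — for $r=1$ one computes $\prod_{j=1}^{n-1}a_j=\frac{n+1}{2n}$, which does not tend to $0$, so the homogeneous part of $u_n$ need not vanish and the $O(1/n)$ conclusion genuinely fails. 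The hypothesis $s<2$ is what makes the gain $(2-s)n^{1-s}$ positive (and lets $n^{-2}$-type remainders be absorbed into $n^{-s}$ in the a priori step).
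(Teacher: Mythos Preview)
Your proof is correct and follows the same induction strategy as the paper (assume $u_n\le V/n+Cn^{-s}$, substitute into the recursion, and show the inequality propagates for large $C$ using the identity $\frac{Vn+V}{(n+1)^2}=\frac{V}{n+1}$). Your version is in fact more careful: you add an a~priori bound to handle the base cases uniformly, and you correctly isolate that $r>1$ is genuinely needed (with a counterexample at $r=1$), whereas the paper's expansion of $\mathcal{A}_n$ mistakenly places the $2n^{-r}$ term inside the $C^{-1}$ bracket, which obscures exactly this point.
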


\begin{proof}
We establish the result using an induction and denote by $\alpha = r \wedge q$ The statement of the lemma is obvious for $n=1$ by choosing a large enough $C$. Assuming now that the result holds for the integer $n$, we write
\begin{eqnarray*}
u_{n+1} & \leq & \left(\frac{n}{n+1}\right)^2 (1+2 n^{-r}) \left[ \frac{V}{n}+ C n^{- \alpha} \right] + 
\frac{V}{(n+1)^2}+\bar{c} n^{-q} \\
& \leq & V \left[ \frac{n}{(n+1)^2}+\frac{1}{(n+1)^2}\right] + 2 V \left(\frac{n}{n+1}\right)^2 n^{-(r+1)}\\
&& + C n^{-\alpha} \left(\frac{n}{n+1}\right)^2 + 2 C  n^{-(\alpha+r)} \left(\frac{n}{n+1}\right)^2 +\bar{c} n^{-q}\\
& = & \frac{V}{n+1} + C (n+1)^{-\alpha} \mathcal{A}_n
\end{eqnarray*}
where 
$$\mathcal{A}_n :=  \frac{2 V n^{1-r} (n+1)^{\alpha-2}}{C} + \left(\frac{n}{n+1}\right)^{2-\alpha} + 2 \frac{n^{2-\alpha-r} }{(n+1)^{2-\alpha}} + \frac{\bar{c}}{C} n^{-q} (n+1)^{\alpha}
$$
We now choose  $\alpha < 2$ and use the first order approximations:
$$
\mathcal{A}_n = 1-(2-\alpha) n^{-1} + C^{-1} \left[ 2V n^{\alpha-(1+r)} + 2 n^{-r} + \bar{c} n^{\alpha-q}\right] + o(n^{-1}).
$$
Then, a large enough $C$ exists such that $\mathcal{A}_n \leq 1$ for any $n \geq 1$ as soon as the powers of $n$ are lower than $1$ inside the brackets on the right hand side of the equality above. Hence, $\alpha$ should be chosen such that
$ \{(1+r)-\alpha\} \wedge \{r\} \wedge \{\alpha - q\} \geq 1$ and the largest possible value of $\alpha$ corresponds to 
the choice $$
\alpha = (q-1) \wedge r.
$$
For such a choice, a large enough $C$ exists such that the recursion holds, which ends the proof of Lemma \ref{lem:tecnique-clef}.
\end{proof}

\section{Growth at infinity  under the KL gradient inequality}
\label{sec:appendixAA}
In this section, we prove the property \eqref{eq:polynincrKL} of Proposition \ref{lemma:KLimphi}.
Without loss of generality, we can assume that $\theta^\star=f(\theta^\star)=0$. 
\smallskip
\begin{proof}
Consider $0 \leq t \leq s$ and $x \in \mathbb{R}^d$. We then associate the solution of the differential equation associated to the flow $-\nabla f$ initialized at $x$:
$$
\chi_x(0)=x \qquad \text{and} \qquad  \dot \chi_x = - \nabla f(\chi_x).
$$
The length of the curve $L(\chi_x,t,s)$ is defined by
$$
L(\chi_x,t,s)= \int_{t}^s \|\dot \chi_x(\tau) \| d \tau.
$$
Under Assumption $\mathbf{(H_{KL}^r)}$, we can consider $\varphi(a)=\frac{a^{1-r}}{1-r}$  and we have that
$$
\varphi'(f(x)) \|\nabla f(x)\| \geq m > 0.
$$
We now observe that $e: s \longmapsto \varphi(f(\chi_x(s)))$ satisfies:
\begin{eqnarray*}
e'(\tau) &=& \varphi'(f(\chi_x(\tau))) \langle \nabla f(\chi_x(\tau)),\dot \chi_x(\tau)\rangle \\
& =& - \varphi'(f(\chi_x(\tau))) \|\nabla f(\chi_x(\tau))\|^2\\
&  \leq &- m \|\dot \chi_x(\tau)\|
\end{eqnarray*}
We deduce that:
\begin{equation}\label{eq:minolength}
e(t)-e(s) = \int_{s}^t e'(\tau) d\tau \geq m \int_{t}^s  \|\dot \chi_x(\tau)\| d\tau \geq m L(\chi_x,t,s)
\end{equation}
Now choosing $t=0$ and $s \longrightarrow + \infty$, we have  
$
e(0)-\lim_{s \longrightarrow + \infty} e(s) = \varphi(f(x)) - \varphi(\min f) = \varphi(f(x)),
$
and Equation \eqref{eq:minolength} yields
$$
\varphi(f(x)) \geq m L(\chi_x,0,+\infty) \geq m \|x\|
$$
because $\chi_x(+\infty)=\arg\min f=0$. We deduce that
$$
f(x) \geq \varphi^{-1} (m \|x\|) = \left\{ m (1-r)\right\}^{\frac{1}{1-r}} \|x\|^{\frac{1}{1-r}}.
$$
which is the desired conclusion.
\end{proof}

\end{document}